\numberwithin{equation}{section}
\newtheorem{theorem}{Theorem}[section]
\newtheorem{proposition}[theorem]{Proposition}
\newtheorem{question}[theorem]{Question}
\newtheorem{corollary}[theorem]{Corollary}
\newtheorem{conjecture}[theorem]{Conjecture}
\newtheorem{lemma}[theorem]{Lemma}
\theoremstyle{definition}
\newtheorem{remark}[theorem]{Remark}
\DeclareMathOperator{\skel}{Skel}
\DeclareMathOperator\lk{\mathrm{lk}}
\DeclareMathOperator\st{\mathrm{st}}
\DeclareMathOperator{\supp}{\mathrm{supp}}
\newcommand{\F}{{\mathbb F}}
\newcommand{\R}{{\mathbb R}}
\newcommand{\Q}{{\mathbb Q}}
\newcommand{\Z}{{\mathbb Z}}
\newcommand{\C}{{\mathcal C}}
\newcommand{\Stress}{\mathcal{S}}
\title{Simplicial spheres with $g_k=1$}
\author{
	Isabella Novik\thanks{Research of IN is partially\textsl{} supported by NSF grant  DMS-2246399.}\\
	\small Department of Mathematics\\[-0.8ex]
	\small University of Washington\\[-0.8ex]
	\small Seattle, WA 98195-4350, USA\\[-0.8ex]
	\small \texttt{novik@uw.edu}
	\and 
	Hailun Zheng\thanks{Research of HZ is partially\textsl{} supported by NSF grant DMS-2535689.} \\
	\small Department of Mathematics\\[-0.8ex]
	\small University of Hawai`i at M\={a}noa\\[-0.8ex]
	\small 2565 McCarthy Mall, Honolulu, HI 96822, USA \\[-0.8ex]
	\small \texttt{hailunz@hawaii.edu}
}			
\begin{document}
	\maketitle
	\begin{abstract}
		For $d\geq 4$, Kalai (1987) characterized all simplicial $(d-1)$-spheres with $g_2=0$, and for $k\geq 2$ and $d\geq 2k$, Murai and Nevo (2013) characterized all simplicial $(d-1)$-spheres with $g_k=0$. In addition, for $d\geq 4$, Nevo and Novinsky (2011) characterized all simplicial $(d-1)$-spheres with $g_2=1$. Motivated by these results, we characterize, for any $k\geq 2$ and $d\geq 2k+1$, all simplicial $(d-1)$-spheres with no missing faces of dimension larger than $d-k$ that satisfy $g_k=1$. When $d=2k$, we obtain a characterization of simplicial $(d-1)$-spheres with $g_k=1$ and no missing faces of dimension greater than $k$, under the additional assumption that there exists at least one missing face of dimension $k$. Finally, for $k=3$, we are able to remove this assumption and characterize all simplicial $5$-spheres with no missing faces of dimension larger than $3$ that satisfy $g_3=1$.
	\end{abstract}
	\section{Introduction}
	
	What is the smallest number of edges that a $(d-1)$-dimensional simplicial sphere with $f_0$ vertices can have? When $d\geq 3$, the answer is given by the Lower Bound Theorem (LBT) \cite{Barnette-LBT-pseudomanifolds, Barnette73, Kalai87}, which asserts that $f_1\geq df_0-\binom{d+1}{2}$. The quantity $f_1-df_0+\binom{d+1}{2}$ is denoted by $g_2$. The inequality $g_2\geq 0$  holds not only for simplicial spheres but for all normal pseudomanifolds; see \cite{Kalai87, Fogelsanger88,Tay}. Furthermore, the case $g_2 = 0$ is completely characterized: when $d\geq 4$, $g_2=0$ holds if and only if the normal $(d-1)$-pseudomanifold in question is the boundary complex of a stacked $d$-polytope.
	
	Motivated by these results, Nevo and Novinsky \cite{NevoNovinsky} gave, for $d\geq 4$, a complete characterization of simplicial $(d-1)$-spheres with no missing faces of dimension larger than $d-2$ that satisfy $g_2=1$.
	Since then, several additional results characterizing simplicial manifolds (or even normal pseudomanifolds) with small values of $g_2$ have appeared in the literature; see, for example, \cite{Z-rigidity, BasakSwartz}.
	
	In complete analogy with the  LBT, for any $1\leq k\leq d/2$, McMullen and Walkup \cite{McMullenWalkup71} conjectured a lower bound on the number of $(k-1)$-faces, $f_{k-1}$, in terms of the face numbers $f_{k-2},\dots,f_0, f_{-1}$. Using the notion of the $g$-numbers---certain alternating weighted sums of the $f$-numbers---their conjecture asserts that for a simplicial $(d-1)$-sphere $\Delta$ and any $k\leq d/2$, we have $g_k(\Delta)\geq 0$. Moreover, equality $g_k=0$ holds if and only if $\Delta$ is $(k-1)$-stacked. This conjecture is known as the Generalized Lower Bound Conjecture (GLBC). The inequality part of the GLBC for the boundary complexes of simplicial polytopes was proved by Stanley \cite{Stanley80}. Much more recently, these inequalities were established for all simplicial spheres (and even for a larger class of $\mathbb{Z}/2\mathbb{Z}$-homology spheres); see \cite{Adiprasito-g-conjecture,PapadakisPetrotou,AdiprasitoPapadakisPetrotou,KaruXiao}. The equality part of the GLBC was proved by Murai and Nevo \cite{MuraiNevo2013}.
	
	The goal of this paper is to understand, for a given $k$, spheres without large missing faces that satisfy $g_k = 1$. Throughout the paper, we work with $\mathbb{Z}/2\mathbb{Z}$-homology spheres and often refer to them simply as spheres. In particular, this class contains all simplicial spheres. We denote by $S(j,d-1)$ the collection of $\mathbb{Z}/2\mathbb{Z}$-homology $(d-1)$-spheres all of whose missing faces have dimension at most $j$. For instance,  $S(1,d-1)$ coincides with the class of flag $(d-1)$-spheres. 
	
	Denote by $\partial\sigma^i$ the boundary complex of an $i$-simplex. Our main results can be summarized as follows:
	\begin{enumerate}
	\item Let $d\geq 2k$, and let $\Delta\in S(d-k,d-1)$. Assume further that if $d=2k$, then $\Delta$ has at least one missing $k$-face; that is, 
	$\Delta\notin S(d-k-1, d-1)=S(k-1, 2k-1)$. Then $g_k(\Delta)=1$ if and only if $\Delta$ is either the join of $\partial \sigma^{d-k}$ and a $(k-1)$-sphere, or the join of $\partial \sigma^j$ and $\partial \sigma^{d-j}$ for some $k < j\leq \lfloor d/2\rfloor$; see Theorem~\ref{main-thm:g=1}.
	\item In the case $2k=d=6$, we give a complete characterization of spheres in $S(3,5)$ with $g_3=1$: $\Delta\in S(3,5)$  has $g_3=1$ if and only if $\Delta$ is either the join of $\partial \sigma^3$ and a $2$-sphere, or the join of three copies of $\partial \sigma^2$; see Theorem~\ref{main-thm: S(2,5)}. In particular, all such spheres are boundaries of simplicial polytopes. 
	\end{enumerate}
	
	For $k=2$, our first result recovers the theorem of Nevo and Novinsky \cite{NevoNovinsky}.\footnote{The condition that for $d=4$, $\Delta\notin S(1,3)$ is automatically satisfied, as all spheres in $S(1,3)$ have $g_2\geq 2$.}  Our proofs rely on the theory of (higher) stress spaces developed by Lee \cite{Lee96} and by Tay, White, and Whiteley \cite{Tay-et-al-I,Tay-et-al}. More specifically, we establish a new version of the cone lemma that allows us to describe the supports of certain stresses (see Section 3), which, in turn, enables us to glean some information about the structure of the complex in question. Another tool we use is McMullen's integral formula \cite[Proposition 2.3]{Swartz05}.
	
The structure of the paper is as follows. In Section~2, we review simplicial complexes and introduce the main object of the paper, $S(j,d-1)$, along with the requisite background on Stanley--Reisner rings and stress spaces; we also derive several corollaries concerning the $g$-numbers. In Section~3, we establish a version of the cone lemma, and discuss one of its applications---Lemma~\ref{lm1}.
Sections~4 and 5 are devoted to proving the two main theorems of the paper. We conclude in Section~6 with a discussion of related open problems.
	
	\section{Preliminaries}
	\subsection{Simplicial complexes and face numbers}
	An (abstract) {\em simplicial complex} $\Delta$ with vertex set $V=V(\Delta)$ is a non-empty collection of subsets of $V$ that is closed under inclusion and contains all singletons; that is, $\{v\}\in\Delta$ for all $v\in V$. An example of a simplicial complex on $V$ is the collection of all subsets of $V$. When $|V|=d+1$, this complex is a {\em $d$-simplex}, and we usually denote it by $\sigma^d$ or by $\overline{V}$ when the vertex set of this simplex is important.
	
	The elements of a simplicial complex $\Delta$ are called {\em faces} of $\Delta$. A face $\tau$ of $\Delta$ has {\em dimension} $i$ if $|\tau|=i+1$; in this case we say that $\tau$ is an {\em $i$-face}. We usually refer to $0$-faces as {\em vertices}, $1$-faces as {\em edges}, and the maximal under inclusion faces as {\em facets}. For brevity, we denote a vertex by $v$, an edge by $uv$, a $2$-face by $uvw$, instead of $\{v\}$, $\{u, v\}$, and $\{u,v,w\}$ respectively. The {\em dimension of $\Delta$} is $\max\{\dim \tau: \tau\in \Delta\}$. We say that $\Delta$ is {\em pure} if all facets of $\Delta$ have the same dimension.

	A set $\tau\subseteq V$ is a {\em missing face} of $\Delta$ if $\tau$ is not a face of $\Delta$, but every proper subset of $\tau$ is a face of $\Delta$. In analogy with faces, a {\em missing $i$-face} is a missing face of size $i+1$. The collection of the missing faces of $\Delta$, together with its vertex set, uniquely determines $\Delta$. 
	
	Let $\tau$ be a face of $\Delta$. The {\em star} and {\em link} of $\tau$ are defined as
	$$\st(\tau)=\st(\tau,\Delta)=\{\sigma \in \Delta \ : \  \sigma\cup \tau\in\Delta\} \;\;\text{ and }\;\;\lk(\tau)=\lk(\tau,\Delta)= \{\sigma\in \st(\tau) \ : \ \sigma\cap \tau=\emptyset\}.$$ When $\tau=v$ is a vertex, we also define $\Delta\backslash v=\{\sigma\in\Delta : v\notin\sigma\}$; this subcomplex of $\Delta$ is called the {\em antistar} of $v$.
	
	A subcomplex of $\Delta$ is called {\em induced} if it is of the form $\Delta[W]=\{\tau\in \Delta: \tau\subseteq W\}$ for some $W\subseteq V(\Delta)$. The subcomplex of $\Delta$ consisting of all faces of $\Delta$ of dimension $\leq k$ is called the {\em $k$-skeleton} of $\Delta$ and is denoted $\skel_k(\Delta)$; the $1$-skeleton of $\Delta$ is also known as the {\em graph} of $\Delta$. Finally, if $\Delta$ and $\Gamma$ are two simplicial complexes on disjoint vertex sets, then their {\em join} is 
	$$\Delta*\Gamma=\{\sigma\cup \tau: \sigma\in \Delta, \tau \in \Gamma\}.$$ If $\Gamma$ is a $0$-simplex, that is, $\Gamma=\{v,\emptyset\}$, we write $\Delta*\Gamma=\Delta*v$  and call this complex the {\em cone} over $\Delta$ with apex $v$.
	 
	 Let $\F$ be a field. A pure $(d-1)$-dimensional simplicial complex is an {\em $\F$-homology sphere} if, for every face $\sigma$ (including the empty face), the link $\lk(\sigma)$ has the homology of a $(d-1-|\sigma|)$-dimensional sphere (over $\F$). 
	 Denote by $\|\Delta\|$ the geometric realization of $\Delta$. We say that $\Delta$ is a {\em simplicial $(d-1)$-sphere} if $\|\Delta\|$ is
	 homeomorphic to a $(d-1)$-dimensional sphere. It is known that a $\Z/2\Z$-homology sphere is always an $\R$-homology sphere; see \cite[Lemma 2.1]{KaruXiao}. Moreover, a simplicial sphere is an $\F$-homology sphere for any field $\F$.
	 
	 In this paper, we work with the class of $\Z/2\Z$-homology spheres, which we refer to simply as spheres.\footnote{The main reason for working with this class is that, unlike the class of simplicial spheres, it is closed under taking links.} Let $\Delta$ be a $(d-1)$-sphere. Denote by $f_i=f_i(\Delta)$ the number of $i$-faces of $\Delta$ and by $m_i=m_i(\Delta)$ the number of missing $i$-faces of $\Delta$. In particular, $f_i=0$ if $i>d-1$. Let $$f(\Delta)=(f_{-1}, f_0, \dots, f_{d-1}) \quad\mbox{and}\quad m(\Delta)=(m_1, m_2, \dots, m_d)$$
	 be the {\em $f$-vector} and the {\em $m$-vector} of $\Delta$, respectively. The {\em $h$-vector} of $\Delta$, $h(\Delta)=(h_0, h_1,\ldots,h_d)$, is obtained from the $f$-vector by the following invertible linear transformation:
	 $$h_j= h_j(\Delta)=\sum_{i=0}^{j} (-1)^{j-i}\binom{d-i}{d-j}f_{i-1}(\Delta) \quad \text{for }\, 0\leq j\leq d.$$ 
	The {\em $g$-vector} of $\Delta$, $g(\Delta)=(g_0,g_1,\ldots, g_{\lfloor d/2\rfloor})$, is defined by letting $g_0=1$ and $g_j=h_j-h_{j-1}$ for $1\leq j\leq \lfloor d/2\rfloor$. When $d$ is odd, we sometimes also consider $g_{\lceil d/2\rceil}$.
	
	The Dehn--Sommerville relations \cite{Klee64} assert that the $h$-vector of a $(d-1)$-sphere is symmetric: $h_i=h_{d-i}$ for all $0\leq i\leq d$. In particular, it follows that if $d$ is odd, then $g_{\lceil d/2\rceil}=0$.
	 
	 Following the notation in \cite{Nevo2009}, we define $S(i, d-1)$ as the set of $\Z/2\Z$-homology $(d-1)$-spheres all of whose missing faces have dimension $\leq i$. Recall that the clique complex of a graph---or equivalently, a complex with no missing faces of dimension $>1$---is called {\em flag}. Hence $S(1, d-1)$ is  the class of  flag $(d-1)$-spheres. The following element of $S(i, d-1)$ deserves a special attention. Write $d=qi+r$, where $q$ and $r$ are (uniquely defined) integers satisfying  $1\leq r\leq i$, and let $K(i, d-1)=(\partial \sigma^i)^{*q} * \partial \sigma^r$, where $(\partial \sigma^i)^{*q}$ denotes the join of $q$ copies of $\partial \sigma^i$. It is known that the sphere $K(i, d-1)$ simultaneously minimizes all the $f$- and $h$-numbers among all spheres in $S(i,d-1)$; see \cite{GoffKleeN, Nevo2009}. Furthermore, $K(1, d-1)$, the boundary complex of the $d$-cross-polytope, simultaneously minimizes all the $g$-numbers among all flag PL $(d-1)$-spheres; see \cite{NZ-Aff-Reconstr}.
	
	To close this section, we mention that, in analogy with simplicial spheres and $\F$-homology spheres, one can define {\em simplicial balls} and {\em $\F$-homology simplicial balls}. The link of any face $\tau$ of an $\F$-homology ball $B$ is either an $\F$-homology sphere or an $\F$-homology ball. We call $\tau$ an {\em interior face} in the former case and a {\em boundary face} in the latter. A {\em minimal interior face}  is an interior face that contains no other interior faces. The collection of all boundary faces of $B$ forms an $\F$-homology sphere of dimension one less than that of $B$. This sphere is called the {\em boundary complex} of $B$ and is denoted $\partial B$.

	In this paper, we will only use $\R$-homology balls, which we often refer to simply as {\em balls}. An $\R$-homology $(d-1)$-sphere $\Delta$ is called {\em $(i-1)$-stacked} if there exists an $\R$-homology $d$-ball $B$ with no interior faces of dimension $\leq d-i$ and with $\partial B=\Delta$. Such a $B$ is called an {\em $(i-1)$-stacked triangulation} of $\Delta$.
	
	\subsection{The Stanley--Reisner ring}
	
	Let $\Delta$ be a $(d-1)$-dimensional simplicial complex with vertex set $V=V(\Delta)$. 
	Let $\F$ be a field of characteristic zero, and let $X=\{x_v : v\in V\}$ be a set of variables, one for each vertex. Denote by $\F[X]=\F[x_v : v\in V] $ the polynomial ring over $\F$ in these variables. The {\em Stanley--Reisner ideal} of $\Delta$ is the ideal of $\F[X]$ generated by the monomials corresponding to missing faces of $\Delta$:
	\[I_\Delta=(x_{j_1}x_{j_2}\cdots x_{j_k} : \{j_1,\ldots,j_k\} \mbox{ is a missing face of }\Delta).\]
	
	The {\em Stanley--Reisner ring} (or {\em face ring}) of $\Delta$ is the quotient $\F[\Delta]:=\F[X]/I_\Delta$. This is a graded ring. Its Hilbert series is given by $\big(\sum_{i=0}^d h_i(\Delta) t^i\big)/(1-t)^d$; see \cite[Theorem II.1.4]{Stanley96}. 
	A sequence $\Theta$ of $d$ linear forms $\theta_1,\ldots,\theta_d$ in $\F[\Delta]$ is called a {\em linear system of parameters} (l.s.o.p) for $\F[\Delta]$ if the quotient ring $\F[\Delta]/(\theta_1,\ldots,\theta_d)$ is a finite-dimensional $\F$-vector space.

	In what follows, assume that $\Delta$ is a $\Z/2\Z$-homology $(d-1)$-sphere. The Stanley--Reisner ring of $\Delta$ is {\em Cohen--Macaulay} \cite{Reisner}; hence any l.s.o.p.~$\Theta$ for $\F[\Delta]$ is a {\em regular sequence}, that is,  for each $1\leq i\leq d$, $\theta_i$ is a non-zero-divisor on $\F[\Delta]/(\theta_1,\ldots,\theta_{i-1})$. Consequently, $\dim_\F \big(\F[\Delta]/(\Theta))_i=h_i(\Delta)$ for all $0\leq i\leq d$, where $R_i$ denotes the $i$-th graded component of a graded ring $R$. The fact that any artinian reduction of the Stanley--Reisner ring of a $(d-1)$-sphere $\Delta$ is a Poincar\'e duality algebra then provides an alternative proof of the Dehn--Sommerville relations: $h_i(\Delta)=h_{d-i}(\Delta)$ for all $0\leq i\leq d$. 
	
	In this paper, we take $\{a_{v,j}\in\R: v\in V, j\in[d]\}$ to be a set {\bf algebraically independent} over $\Q$, and we let $\F=\Q(a_{v, j} : v\in V, j\in[d])$. The sequence $\Theta=(\theta_1, \dots, \theta_d)$, where $\theta_j:=\sum_{v\in V} a_{v,j}x_v$ for $1\leq j\leq d$, then forms an l.s.o.p.~for $\F[\Delta]$; see \cite[Theorem III.2.4]{Stanley96}. The ring $\F[\Delta]/(\Theta)$ is called the {\em generic artinian reduction} of $\F[\Delta]$.

Let $c=\sum_{v\in V} x_v$. The following result is the celebrated (algebraic version of the) $g$-theorem; see \cite{McMullen96,Stanley80} for the case of simplicial polytopes and \cite{Adiprasito-g-conjecture,PapadakisPetrotou, AdiprasitoPapadakisPetrotou,KaruXiao}, or, more specifically, \cite[Theorem 1.3]{KaruXiao}, for the case of spheres.

	\begin{theorem} \label{alg-g-thm}
	Let $\Delta$ be a $(d-1)$-sphere, let  $\F[\Delta]/(\Theta)$ be the generic artinian reduction of $\F[\Delta]$, and let $c=\sum_{v\in V} x_v$.
		Then, for every $0\leq k\leq \lfloor d/2\rfloor$, the map $\cdot c^{d-2k}: \big(\F[\Delta]/(\Theta)\big)_k \to \big(\F[\Delta]/(\Theta)\big)_{d-k}$ is an isomorphism. In particular, the map
			$$\cdot c: \big(\F[\Delta]/(\Theta)\big)_k \to \big(\F[\Delta]/(\Theta)\big)_{k+1}$$ is injective for all $0\leq k\leq \lceil d/2\rceil -1$ and surjective for all $\lfloor d/2\rfloor \leq k\leq d-1$. Consequently, $\dim_\F  \big(\F[\Delta]/(\Theta, c)\big)_k=g_k(\Delta)$ for all $k\leq \lceil d/2\rceil$.
	\end{theorem}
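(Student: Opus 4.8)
\medskip

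The statement splits into a deep part and a routine part, and I would handle them separately. The core assertion---that $\cdot c^{\,d-2k}\colon(\F[\Delta]/(\Theta))_k\to(\F[\Delta]/(\Theta))_{d-k}$ is an isomorphism---is exactly the Hard Lefschetz property for $\Z/2\Z$-homology spheres, with the specific Lefschetz element $c=\sum_{v\in V}x_v$ acting on the generic artinian reduction; this is the hard direction of the GLBC, and I would not attempt to reprove it, but rather import it verbatim from \cite[Theorem 1.3]{KaruXiao} (see also \cite{Adiprasito-g-conjecture,PapadakisPetrotou,AdiprasitoPapadakisPetrotou}). Everything else follows by elementary linear algebra from this isomorphism together with two facts already recorded in the excerpt: $\dim_\F(\F[\Delta]/(\Theta))_k=h_k(\Delta)$ (Cohen--Macaulayness) and $h_k=h_{d-k}$ (Dehn--Sommerville).

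Write $A=\F[\Delta]/(\Theta)$. For the injectivity claim, fix $0\le k\le\lceil d/2\rceil-1$, and note that then $k\le\lfloor d/2\rfloor$ while $d-2k\ge1$. Hence the isomorphism $\cdot c^{\,d-2k}\colon A_k\to A_{d-k}$ factors as $(\cdot c^{\,d-2k-1})\circ(\cdot c)$, with first map $\cdot c\colon A_k\to A_{k+1}$; since the composite is injective, so is $\cdot c$ on $A_k$. For the surjectivity claim, fix $\lfloor d/2\rfloor\le k\le d-1$ and put $k'=d-k-1$. A one-line check gives $0\le k'\le\lfloor d/2\rfloor$ and $d-2k'=2k+2-d\ge1$, so the isomorphism $\cdot c^{\,d-2k'}\colon A_{k'}\to A_{d-k'}=A_{k+1}$ factors as $(\cdot c)\circ(\cdot c^{\,d-2k'-1})$, with last map $\cdot c\colon A_k\to A_{k+1}$; since the composite is surjective, so is $\cdot c$ on $A_k$.

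For the concluding dimension count, apply the right-exact sequence $A_{k-1}\xrightarrow{\ \cdot c\ }A_k\to(A/cA)_k\to 0$, which yields $\dim_\F(A/(\Theta,c))_k=h_k-\operatorname{rank}\big(\cdot c\colon A_{k-1}\to A_k\big)$. When $k\le\lceil d/2\rceil$ we have $k-1\le\lceil d/2\rceil-1$, so by the injectivity established above---trivially so when $k=0$, since $A_{-1}=0$---the map $\cdot c\colon A_{k-1}\to A_k$ is injective and hence has rank $h_{k-1}$. Therefore $\dim_\F(A/(\Theta,c))_k=h_k-h_{k-1}=g_k(\Delta)$, as claimed.

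The only genuine obstacle is the first paragraph: the Hard Lefschetz isomorphism is the resolution of a long-standing conjecture and must be taken as a black box. In the routine part, the sole thing needing care is the index bookkeeping---checking that both $k$ and $k'=d-k-1$ land in the interval $[0,\lfloor d/2\rfloor]$ where the isomorphism is available, and that the relevant exponents of $c$ are at least $1$, so that a single factor of $\cdot c$ can legitimately be peeled off.
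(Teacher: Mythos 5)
Your proposal is correct and matches the paper's treatment: the paper cites the Hard Lefschetz isomorphism from \cite[Theorem 1.3]{KaruXiao} (and the other references) without proof and leaves the remaining deductions implicit, exactly as you do. Your index bookkeeping for factoring off one copy of $\cdot c$ on each side of the middle degree is accurate, and the final dimension count via the right-exact sequence $A_{k-1}\xrightarrow{\cdot c}A_k\to(A/cA)_k\to0$ together with injectivity of $\cdot c$ in the relevant range is the standard argument the authors intend.
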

	
	\noindent In view of this theorem, $c=\sum_{v\in V} x_v$ is called the {\em canonical Lefschetz element}. The inequality part of the Generalized Lower Bound Theorem (GLBT, for short) then follows immediately:
	
	\begin{theorem}  \label{GLBT-ineq}
	If $\Delta$ is a $(d-1)$-sphere, then $g_k(\Delta)\geq 0$ for all $k\leq \lfloor d/2\rfloor$.
	\end{theorem}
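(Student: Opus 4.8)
The plan is to read off the inequality directly from the algebraic $g$-theorem (Theorem~\ref{alg-g-thm}). First I would recall that $\F[\Delta]/(\Theta,c)$ is a finitely generated graded $\F$-algebra, so each graded component $\big(\F[\Delta]/(\Theta,c)\big)_k$ is a finite-dimensional $\F$-vector space and hence has a well-defined, non-negative dimension. Next I would invoke the last assertion of Theorem~\ref{alg-g-thm}, which identifies this dimension: $\dim_\F\big(\F[\Delta]/(\Theta,c)\big)_k = g_k(\Delta)$ for all $k\leq\lceil d/2\rceil$. Combining the two observations, $g_k(\Delta)=\dim_\F\big(\F[\Delta]/(\Theta,c)\big)_k\geq 0$ for every $k\leq\lfloor d/2\rfloor$, which is the claim.

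If one wishes to make the dependence on Theorem~\ref{alg-g-thm} more explicit rather than citing its final sentence, the intermediate step is to use the injectivity of $\cdot c:\big(\F[\Delta]/(\Theta)\big)_{k-1}\to\big(\F[\Delta]/(\Theta)\big)_{k}$ for $k\leq\lfloor d/2\rfloor$. This injectivity gives a short exact sequence $0\to\big(\F[\Delta]/(\Theta)\big)_{k-1}\xrightarrow{\,\cdot c\,}\big(\F[\Delta]/(\Theta)\big)_{k}\to\big(\F[\Delta]/(\Theta,c)\big)_{k}\to 0$, whence $\dim_\F\big(\F[\Delta]/(\Theta,c)\big)_k=h_k(\Delta)-h_{k-1}(\Delta)=g_k(\Delta)$, using that $\dim_\F\big(\F[\Delta]/(\Theta)\big)_i=h_i(\Delta)$ by Cohen--Macaulayness. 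Non-negativity of a vector-space dimension then finishes the argument.

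There is no real obstacle here: the theorem is an immediate corollary of the already-quoted $g$-theorem, and the only content is the bookkeeping that a Hilbert function is non-negative and that $g_k$ is exactly the Hilbert function of $\F[\Delta]/(\Theta,c)$ in degree $k$. The one point worth stating carefully is the range of $k$: the identification $\dim_\F\big(\F[\Delta]/(\Theta,c)\big)_k=g_k(\Delta)$ is asserted in Theorem~\ref{alg-g-thm} for $k\leq\lceil d/2\rceil$, so in particular it covers all $k\leq\lfloor d/2\rfloor$, which is the range in the statement.
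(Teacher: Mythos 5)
Your proposal is correct and is precisely the paper's own (one-line) argument: the paper states that Theorem~\ref{GLBT-ineq} "follows immediately" from Theorem~\ref{alg-g-thm}, namely because $g_k(\Delta)=\dim_\F\big(\F[\Delta]/(\Theta,c)\big)_k$ is a vector-space dimension and hence non-negative. Your optional expansion via the short exact sequence induced by injectivity of $\cdot c$ is a correct unwinding of the last sentence of Theorem~\ref{alg-g-thm}, but it is the same route, not a different one.
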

	
	Since $\F[\Delta]/(\Theta,c)$ is a standard graded algebra, Macaulay's theorem \cite[Theorem II.2.2]{Stanley96} implies that the $g$-numbers also satisfy certain nonlinear inequalities. In this paper, we will only use the following consequence: if, for some $k<\lfloor d/2\rfloor$, $g_k=1$, then for all $j$ with $k<j\leq \lfloor d/2\rfloor$, we have $g_j\leq 1$; furthermore, if for some $k<\lfloor d/2\rfloor$, $g_k=0$, then for all $j$ with $k<j\leq \lfloor d/2\rfloor$, $g_j=0$.
	
	\subsection{More results on the $g$-numbers}
	Now we collect a few additional results on the $g$-numbers of spheres. We start with the following elementary but extremely useful lemma, which applies to all pure simplicial complexes. It was established for simplicial polytopes by McMullen \cite[p.~183]{McMullen70} and for all pure simplicial complexes by Swartz \cite[Proposition 2.3]{Swartz05}, and is known in the literature as {\em McMullen's integral formula}.
	\begin{lemma}\label{lm: Swartz}
		If $\Delta$ is a pure $(d-1)$-dimensional simplicial complex, then for $0\leq k\leq \lfloor \frac{d-1}{2}\rfloor$,
		\[\sum_{v\in V(\Delta)} g_k(\lk(v))=(k+1)g_{k+1}(\Delta)+(d+1-k)g_k(\Delta).\]
	\end{lemma}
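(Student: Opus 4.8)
The plan is to deduce the $g$-identity from the analogous $h$-vector identity, which itself follows from an elementary double count of vertex--face incidences, conveniently packaged via generating functions. First I would record the incidence count: an $i$-face of $\Delta$ has exactly $i+1$ vertices, and for each vertex $v$ the map $\sigma\mapsto\sigma\setminus\{v\}$ is a bijection from the $i$-faces of $\Delta$ containing $v$ onto the $(i-1)$-faces of $\lk(v)$; hence
\[\sum_{v\in V(\Delta)}f_{i-1}(\lk(v))=(i+1)\,f_i(\Delta)\qquad\text{for all }i.\]
Introducing the $f$-polynomial $F_\Gamma(t):=\sum_{i\ge0}f_{i-1}(\Gamma)\,t^i$ of a complex $\Gamma$, and noting that $\Delta$ has no faces of dimension $>d-1$ while every $\lk(v)$ has none of dimension $>d-2$, this incidence count is precisely the identity $\sum_{v}F_{\lk(v)}(t)=F_\Delta'(t)$, since the coefficient of $t^i$ on the left is $\sum_v f_{i-1}(\lk(v))$ and on the right it is $(i+1)f_i(\Delta)$.

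Next I would translate this into $h$-vectors. Using the standard reformulation $F_\Gamma(t)=\sum_i h_i(\Gamma)\,t^i(1+t)^{n-i}$, valid when $n$ exceeds the dimension of $\Gamma$, we have $F_\Delta(t)=\sum_i h_i(\Delta)\,t^i(1+t)^{d-i}$ and $F_{\lk(v)}(t)=\sum_i h_i(\lk(v))\,t^i(1+t)^{d-1-i}$. Differentiating term by term via $\frac{d}{dt}\big(t^i(1+t)^{d-i}\big)=i\,t^{i-1}(1+t)^{d-i}+(d-i)\,t^i(1+t)^{d-1-i}$ and reindexing the first sum gives
\[F_\Delta'(t)=\sum_i\big((i+1)\,h_{i+1}(\Delta)+(d-i)\,h_i(\Delta)\big)\,t^i(1+t)^{d-1-i}.\]
The polynomials $\{t^i(1+t)^{d-1-i}\}_{i=0}^{d-1}$ have pairwise distinct $t$-adic valuations, hence are linearly independent and form a basis of the space of polynomials of degree $\le d-1$; comparing coefficients with the expansion of $\sum_v F_{\lk(v)}(t)$ therefore yields
\[\sum_{v\in V(\Delta)}h_i(\lk(v))=(i+1)\,h_{i+1}(\Delta)+(d-i)\,h_i(\Delta)\qquad(0\le i\le d-1).\]

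Finally I would pass from $h$ to $g$. For $k\ge1$, subtracting the $i=k-1$ instance of the last display from the $i=k$ instance, and using $g_k(\lk(v))=h_k(\lk(v))-h_{k-1}(\lk(v))$ on the left, gives $\sum_v g_k(\lk(v))=(k+1)h_{k+1}(\Delta)+(d-2k)h_k(\Delta)-(d-k+1)h_{k-1}(\Delta)$, and a one-line manipulation with $g_j(\Delta)=h_j(\Delta)-h_{j-1}(\Delta)$ rewrites the right-hand side as $(k+1)g_{k+1}(\Delta)+(d+1-k)g_k(\Delta)$; the case $k=0$ follows at once from the $i=0$ instance together with $h_1(\Delta)=f_0(\Delta)-d$ and $g_0\equiv1$. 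There is no genuine obstacle here: the only points requiring care are the index bookkeeping when differentiating and reindexing the $h$-expansion of $F_\Delta$, and the (routine) justification that one may equate coefficients in the basis $\{t^i(1+t)^{d-1-i}\}$.
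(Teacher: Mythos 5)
The paper does not prove this lemma; it is quoted as McMullen's integral formula with a citation to \cite{Swartz05} (for general pure complexes) and \cite{McMullen70} (for polytopes). Your proof is correct and self-contained, and it follows essentially the same generating-function route as Swartz's: the double count $\sum_v f_{i-1}(\lk(v))=(i+1)f_i(\Delta)$, packaged as $\sum_v F_{\lk(v)}=F_\Delta'$, then translated through the $h$-basis $\{t^i(1+t)^{d-1-i}\}$ to obtain the $h$-level identity $\sum_v h_i(\lk(v))=(i+1)h_{i+1}(\Delta)+(d-i)h_i(\Delta)$, and finally differenced to reach the $g$-level statement. All the bookkeeping (the reindexing after differentiation, the linear independence of $\{t^i(1+t)^{d-1-i}\}$ via distinct $t$-adic valuations, and the $k=0$ base case) checks out. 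The one place you might make the dependence on hypotheses a hair more explicit is the use of purity: it guarantees that every $\lk(v)$ has dimension exactly $d-2$, so the expansion $F_{\lk(v)}(t)=\sum_i h_i(\lk(v))\,t^i(1+t)^{d-1-i}$ uses the same exponent $d-1$ uniformly across all vertices, which is what lets you collect the sum over $v$ coefficientwise in a single basis. This is implicit in your write-up but worth flagging, since without purity the $h$-vectors of the links would be computed relative to varying ambient dimensions.
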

	
	The following result addresses the equality part of the GLBT;  see \cite{MuraiNevo2013,Nagel}. Given a simplicial complex $\Delta$, define $$\Delta(j)=\{\tau\subseteq V(\Delta): \skel_{j}(\overline{\tau})\subseteq \Delta\}.$$
	\begin{theorem}\label{thm: GLBT}
		Let $1\leq k\leq d/2$ and let $\Delta$ be a $(d-1)$-sphere. Then $g_k(\Delta)=0$ if and only if $\Delta$ is $(k-1)$-stacked. Furthermore, if $g_k(\Delta)=0$, then $\Delta(d-k)=\Delta(k-1)$, and this complex is an $\R$-homology $d$-ball that provides a $(k-1)$-stacked triangulation of $\Delta$. 
	\end{theorem}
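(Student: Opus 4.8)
The ``if'' direction is routine; the real content is the ``only if'' direction, and I would prove the two separately.

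\emph{Sufficiency.} Suppose $\Delta$ is $(k-1)$-stacked, witnessed by an $\R$-homology $d$-ball $B$ with $\partial B=\Delta$ and no interior faces of dimension $\le d-k$. Splitting each face of $B$ as a boundary or an interior face gives $f_{i-1}(B)=f_{i-1}(\partial B)+f_{i-1}(B,\partial B)$; feeding this into the generating-function form of the $h$-transformation, while bookkeeping the change of ambient dimension from $\partial B$ to $B$, yields
$$\sum_i h_i(B)\,t^i=(1-t)\sum_i h_i(\Delta)\,t^i+\sum_i h_i(B,\partial B)\,t^i,$$
so comparing coefficients of $t^k$ gives $g_k(\Delta)=h_k(\Delta)-h_{k-1}(\Delta)=h_k(B)-h_k(B,\partial B)$. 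Since $\F[B]$ is Cohen--Macaulay and $B$ is an $\R$-homology ball, the relative complex $(B,\partial B)$ obeys the duality $h_i(B,\partial B)=h_{d+1-i}(B)$; and the hypothesis on interior faces forces $h_j(B,\partial B)=0$ for all $j\le d-k+1$, hence $h_i(B)=0$ for all $i\ge k$. As $k\le d-k+1$ (because $k\le d/2$), both $h_k(B)$ and $h_k(B,\partial B)$ vanish, so $g_k(\Delta)=0$.

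\emph{Necessity --- reduction.} Assume $g_k(\Delta)=0$. The Macaulay-type consequence of Theorem~\ref{alg-g-thm} recorded above gives $g_j(\Delta)=0$ for all $k\le j\le\lfloor d/2\rfloor$, and hence, via Dehn--Sommerville, that $\dim_\F A_j$ is constant for $k-1\le j\le d-k+1$, where $A=\F[\Delta]/(\Theta)$ and $c$ is the canonical Lefschetz element. Combined with Theorem~\ref{alg-g-thm} this forces $\cdot c\colon A_j\to A_{j+1}$ to be an isomorphism for all $k-1\le j\le d-k$, so $A_j=c^{\,j-k+1}A_{k-1}$ throughout this range. The natural candidate for a $(k-1)$-stacked triangulation is $B:=\Delta(k-1)$, whose Stanley--Reisner ideal is generated by the degree-$\le k$ part of $I_\Delta$; in particular $\F[\Delta]$ and $\F[\Delta(k-1)]$, as well as their reductions modulo $\Theta$, agree in all degrees $\le k$. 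It then suffices to prove: (i) $\Delta$ has no missing face of dimension in $[k,d-k]$ --- which immediately yields $\Delta(k-1)=\Delta(d-k)$ and identifies the minimal interior faces of $\Delta(k-1)$ with the missing faces of $\Delta$ of dimension $>d-k$, so $\Delta(k-1)$ has no interior faces of dimension $\le d-k$; and (ii) $\Delta(k-1)$ is an $\R$-homology $d$-ball with boundary $\Delta$.

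\emph{Necessity --- the core, and the main obstacle.} For (i) and (ii) I would induct on $d$. When $d\ge 2k+2$, every vertex link $\lk(v)$ is a $(d-2)$-sphere with $k\le\lfloor(d-2)/2\rfloor$, and McMullen's integral formula (Lemma~\ref{lm: Swartz}) together with $g_k(\Delta)=g_{k+1}(\Delta)=0$ gives $\sum_v g_k(\lk v)=0$; by Theorem~\ref{GLBT-ineq} each summand vanishes, so every link has $g_k=0$ and, by induction, is $(k-1)$-stacked with stacked triangulation its own complex $\lk(v)(k-1)$. One then verifies that these local balls are precisely the vertex links of $\Delta(k-1)$ and fit together into an $\R$-homology $d$-ball with boundary $\Delta$, and that any missing face of $\Delta$ of dimension strictly between $k$ and $d-k$ would descend to a forbidden missing face of some link. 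What remains is (a) the base cases $d\in\{2k,2k+1\}$, where $g_k$ is effectively the top $g$-number and the link formula carries no information, and (b) excluding missing $k$-faces and $(d-k)$-faces of $\Delta$ and checking the identity $\lk(v,\Delta(k-1))=\lk(v,\Delta)(k-1)$. For (a) I would argue directly in the Artinian reduction: the isomorphisms $A_j\cong A_{k-1}$ together with the low-degree coincidence of $\F[\Delta]$ and $\F[\Delta(k-1)]$ pin down the Hilbert function of the Artinian reduction of $\F[\Delta(k-1)]$, which one matches against the $h$-vector of a $(k-1)$-stacked $d$-ball and then feeds into Reisner's criterion to conclude $\Delta(k-1)$ is an $\R$-homology $d$-ball. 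I expect (a) to be the main obstacle: it is exactly the point at which the purely algebraic Lefschetz data must be converted into the topological assertion that the combinatorially defined complex $\Delta(k-1)$ is an $\R$-homology $d$-ball, with (b) a more technical but less conceptual nuisance.
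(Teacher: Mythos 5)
The paper does not prove Theorem~\ref{thm: GLBT}; it is stated as a prior result and cited to Murai--Nevo \cite{MuraiNevo2013} and Nagel \cite{Nagel}, so there is no in-paper proof to compare your attempt against. Judged on its own terms, your proposal splits cleanly into a correct half and a genuinely incomplete half. The sufficiency argument is essentially right: the identity $h_B(t)=(1-t)h_\Delta(t)+h_{(B,\partial B)}(t)$, the ball duality $h_i(B,\partial B)=h_{d+1-i}(B)$, and the observation that the hypothesis on interior faces kills $h_j(B,\partial B)$ for $j\le d-k+1$ and hence $h_i(B)$ for $i\ge k$, together do give $g_k(\Delta)=0$; this is the standard computation.

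The necessity direction, however, contains gaps that are not merely ``technical nuisances.'' First, in the inductive step you assert that the vertex links of $\Delta(k-1)$ are $\lk(v,\Delta)(k-1)$, but unwinding definitions, membership of $\tau$ in $\lk(v,\Delta(k-1))$ only forces $\skel_{k-2}(\overline{\tau})\subseteq\lk(v,\Delta)$ (together with $\skel_{k-1}(\overline{\tau})\subseteq\Delta$), whereas membership in $\lk(v,\Delta)(k-1)$ forces the stronger $\skel_{k-1}(\overline{\tau})\subseteq\lk(v,\Delta)$. Establishing the equality already requires the absence of middle-dimensional missing faces, which is the conclusion you are trying to feed into the induction; as written, the argument is circular. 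Second, and more fundamentally, knowing the Hilbert function of the Artinian reduction of $\F[\Delta(k-1)]$ cannot be ``fed into Reisner's criterion'': Reisner's criterion is a condition on the reduced homology of links, and two complexes with identical Hilbert functions can have very different link homology, so matching numbers against the $h$-vector of a generic $(k-1)$-stacked ball does not deliver the topological conclusion that $\Delta(k-1)$ is an $\R$-homology ball. The actual proof in the literature does not proceed by induction on dimension; it establishes directly from the Lefschetz isomorphisms $\cdot c^{d-2j}:A_j\to A_{d-j}$ the key combinatorial fact that any $\sigma$ with $|\sigma|\le d-k+1$ and $\skel_{k-1}(\overline{\sigma})\subseteq\Delta$ must lie in $\Delta$ (whence $\Delta(k-1)=\Delta(d-k)$ and the absence of middle missing faces), and then separately argues that $\Delta(d-k)$ is a homology ball. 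Your reduction to statements (i) and (ii) is the right frame, but the core of (i) and all of (ii) remain unproved, exactly as you flag.
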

	
	We will also use the following characterization of $k$-stackedness in terms of the $m$-numbers, proved in \cite[Corollary 1.4]{MNZ}:
	\begin{proposition}\label{thm2: Murai-Novik-Zheng}
		Let $d\geq 4$ and let $\Delta$ be a $(d-1)$-sphere. Then for $1\leq k\leq \lfloor\frac{d}{2}\rfloor  -1$, $\Delta$ is $k$-stacked if and only if $g_k(\Delta)=m_{d-k}(\Delta)$. 
	\end{proposition}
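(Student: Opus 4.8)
\smallskip\noindent\emph{Proof proposal.}
The plan is to reduce the statement to the equality case of the GLBT (Theorem~\ref{thm: GLBT}) together with the theory of stresses. Applying Theorem~\ref{thm: GLBT} with $k$ replaced by $k+1$, we see that $\Delta$ is $k$-stacked if and only if $g_{k+1}(\Delta)=0$, and that in this case $B:=\Delta(k)=\Delta(d-k-1)$ is an $\R$-homology $d$-ball with $\partial B=\Delta$ and no interior faces of dimension $\le d-k-1$. Hence it suffices to prove
\[
g_{k+1}(\Delta)=0 \quad\Longleftrightarrow\quad g_k(\Delta)=m_{d-k}(\Delta).
\]

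For the implication ``$\Rightarrow$'', suppose $\Delta$ is $k$-stacked and let $B=\Delta(k)$ be the ball above. First I would record a bijection between the missing $(d-k)$-faces of $\Delta$ and the interior $(d-k)$-faces of $B$. Since $k\le\lfloor d/2\rfloor-1$ we have $d-k+1>k+1$, so if $\tau$ is a missing $(d-k)$-face of $\Delta$ then every subset of $\tau$ of size $\le k+1$ is a proper subset of $\tau$ and hence lies in $\Delta$; thus $\tau\in\Delta(k)=B$ but $\tau\notin\Delta=\partial B$, so $\tau$ is interior. Conversely, if $\tau$ is an interior $(d-k)$-face of $B$, then every proper subset of $\tau$ lies in $B$ and has dimension $\le d-k-1$, hence lies in $\partial B=\Delta$ (as $B$ has no interior faces in that range), while $\tau\notin\Delta$; so $\tau$ is a missing face of $\Delta$. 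Therefore $m_{d-k}(\Delta)$ equals the number of interior $(d-k)$-faces of $B$. It then remains to prove that $g_k(\Delta)$ equals this same count; this is the $k$-stacked case of the Dehn--Sommerville-type relations between the $h$-vector of a ball and that of its boundary sphere, or equivalently the statement that for a $k$-stacked ball the space of $k$-stresses of $\partial B$ --- of dimension $g_k(\partial B)$ by Theorem~\ref{alg-g-thm} --- has a basis consisting of the elementary $k$-stresses indexed by the interior $(d-k)$-faces of $B$. Combining the two facts yields $g_k(\Delta)=m_{d-k}(\Delta)$.

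For the implication ``$\Leftarrow$'', the plan is to show that $g_k(\Delta)\ge m_{d-k}(\Delta)$ holds for every $(d-1)$-sphere and that equality forces $g_{k+1}(\Delta)=0$. To each missing $(d-k)$-face $\tau$ one attaches an elementary $k$-stress $s_\tau$, supported on the faces of $\Delta$ in a neighborhood of $\partial\overline{\tau}$; using the algebraic independence of the $a_{v,j}$ one checks that the $s_\tau$ are linearly independent in the $k$-stress space, which gives the inequality. If equality holds, the $s_\tau$ form a basis of that space; one then argues that a nonzero $(k+1)$-stress would, after multiplication by the canonical Lefschetz element $c$, produce a $k$-stress whose support is not contained in a neighborhood of any single missing $(d-k)$-face, contradicting that every $k$-stress is a combination of the $s_\tau$. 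Hence $g_{k+1}(\Delta)=0$, i.e.\ $\Delta$ is $k$-stacked. (An alternative for this direction is induction on $d$ via McMullen's integral formula, Lemma~\ref{lm: Swartz}, applied to $g_k$, together with a companion identity relating $\sum_v m_{d-k-1}(\lk v)$ to $m_{d-k}(\Delta)$ and $m_{d-k-1}(\Delta)$.)

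The hard part will be the ``$\Leftarrow$'' direction, and more precisely the step of converting the equality $g_k(\Delta)=m_{d-k}(\Delta)$ --- the assertion that every $k$-stress is elementary --- into the vanishing $g_{k+1}(\Delta)=0$: this requires genuinely controlling the supports of $(k+1)$-stresses and how they interact with $c$-multiplication, and for the induction alternative the delicate point is the top of the range $k=\lfloor d/2\rfloor-1$, where the vertex links no longer satisfy the hypothesis for the same $k$. A secondary technical input is the identity $g_k(\Delta)=\#\{\text{interior }(d-k)\text{-faces of }B\}$ used in the ``$\Rightarrow$'' direction.
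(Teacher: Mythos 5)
The paper does not give a proof of this proposition: it is cited verbatim from \cite[Corollary 1.4]{MNZ}, so there is no in-paper argument to compare against. That said, here is an assessment of your blind attempt.

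For the ``$\Rightarrow$'' direction your reduction to the bijection between missing $(d-k)$-faces of $\Delta$ and interior $(d-k)$-faces of $B=\Delta(k)=\Delta(d-k-1)$ is correct, and the two containment checks are carried out correctly (the inequality $d-k+1>k+1$ is exactly what makes the argument work). However, you have displaced the entire difficulty onto the assertion that $g_k(\Delta)$ equals the number of interior $(d-k)$-faces of $B$. This is not a formality: it is a nontrivial consequence of the Dehn--Sommerville relations for homology balls (the relation $h_{d+1-j}(B)=h_j^{\mathrm{int}}(B)$ between the $h$-vector of $B$ and its ``interior $h$-vector''), combined with the vanishing of $h_j^{\mathrm{int}}(B)$ in low degrees for a $k$-stacked ball, and it deserves an actual proof rather than a one-line appeal. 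As written, the forward direction has the right skeleton but the core identity is only named, not established.

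The ``$\Leftarrow$'' direction is where the proposal genuinely breaks down. Starting from Nagel's inequality $g_k\ge m_{d-k}$ (Proposition~\ref{prop: enhanced upper bound g-theorem} in this paper) is fine. But the argument you sketch for extracting $g_{k+1}(\Delta)=0$ from equality confuses the two dual pictures: an affine $(k+1)$-stress is, by definition, an element of the \emph{kernel} of $\partial_c\colon\Stress^\ell_{k+1}\to\Stress^\ell_k$, so ``multiplying a nonzero $(k+1)$-stress by $c$'' does not produce a $k$-stress --- multiplication by $c$ in the Stanley--Reisner quotient raises degree, while the operator acting on stresses is $\partial_c$, which kills affine stresses. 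Moreover, even if one had a $k$-stress in hand, the claim that it cannot be a combination of the ``elementary'' $s_\tau$ because its support is not confined to one neighborhood of a missing face does not follow: a linear combination of the $s_\tau$ can have arbitrarily spread-out support. So the mechanism by which equality is supposed to force $g_{k+1}=0$ is not in place. The alternative you mention --- induction via McMullen's formula (Lemma~\ref{lm: Swartz}) paired with a companion identity for $\sum_v m_{d-1-k}(\lk v)$ --- is closer to a workable route, but you correctly flag the base of the induction at $k=\lfloor d/2\rfloor-1$ as the point where the hypotheses are no longer inherited by links, and no way around that is proposed. In short: the forward direction is a reasonable but incomplete sketch; the converse direction as written does not go through.
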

	
	Proposition~\ref{thm2: Murai-Novik-Zheng} was motivated by the following result of Nagel \cite[Corollary 4.6(a)]{Nagel}
	
	\begin{proposition}\label{prop: enhanced upper bound g-theorem}
		Let $d\geq 4$ and let $\Delta$ be a $(d-1)$-sphere. Then for all $1\leq k\leq \lceil\frac{d}{2}\rceil -1$, $g_k(\Delta)\geq m_{d-k}(\Delta)$.
\end{proposition}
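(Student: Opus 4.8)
The plan is to pass to the generic Artinian reduction, translate the claimed inequality into a statement about the number of degree-$(d-k+1)$ minimal generators of a single ideal, and then settle it by a short dimension count.

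Set $A=\F[\Delta]/(\Theta)$ and $S=\F[X]/(\Theta)$; since $\Theta$ is a generic l.s.o.p., $S$ is a polynomial ring (in $N:=f_0(\Delta)-d$ variables) and $A=S/\bar J$ for some ideal $\bar J$ generated in degrees $\ge 2$. Because $\F[\Delta]$ is Cohen--Macaulay, $\Theta$ is a regular sequence, so the graded Betti numbers are unchanged when passing from $(\F[X],\F[\Delta])$ to $(S,A)$; combined with the fact that $I_\Delta$ is minimally generated by the squarefree monomials of the missing faces, this yields that $\bar J$ has exactly $m_{j-1}(\Delta)$ minimal generators in degree $j$. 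In other words,
\[
m_{d-k}(\Delta)=\dim_\F\!\bigl(\bar J_{d-k+1}/S_1\bar J_{d-k}\bigr).
\]
Let $\ell\in S_1$ be the (nonzero) image of the canonical Lefschetz element $c=\sum_v x_v$; by Theorem~\ref{alg-g-thm}, $A/\ell A\cong\F[\Delta]/(\Theta,c)$ has Hilbert function $(g_0(\Delta),\dots,g_{\lfloor d/2\rfloor}(\Delta),0,0,\dots)$, and multiplication by $\ell$ is surjective on $A_i$ for every $i\ge\lfloor d/2\rfloor$.

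Now the key bookkeeping. As $k\le\lceil d/2\rceil-1$, both $d-k$ and $d-k+1$ exceed $\lfloor d/2\rfloor$, so $(A/\ell A)_{d-k}=(A/\ell A)_{d-k+1}=0$; equivalently,
\[
\bar J_{d-k}+\ell\, S_{d-k-1}=S_{d-k},\qquad \bar J_{d-k+1}+\ell\, S_{d-k}=S_{d-k+1},
\]
and multiplying the first identity by $S_1$ also gives $S_1\bar J_{d-k}+\ell\, S_{d-k}=S_{d-k+1}$. Consider the two surjections $\bar J_{d-k+1}\oplus S_{d-k}\to S_{d-k+1}$ and $S_1\bar J_{d-k}\oplus S_{d-k}\to S_{d-k+1}$, both given by $(u,v)\mapsto u+\ell v$; since $S$ is a domain, their kernels are $K:=\{v\in S_{d-k}:\ell v\in\bar J_{d-k+1}\}$ and $L:=\{v\in S_{d-k}:\ell v\in S_1\bar J_{d-k}\}$, respectively. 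Counting dimensions and subtracting, one gets $m_{d-k}(\Delta)=\dim_\F(K/L)$. Since $\bar J_{d-k}\subseteq L\subseteq K$ and, directly from the definition of $A=S/\bar J$, $K/\bar J_{d-k}\cong(0:_A\ell)_{d-k}$, we obtain
\[
m_{d-k}(\Delta)=\dim_\F(K/L)\le\dim_\F\!\bigl(K/\bar J_{d-k}\bigr)=\dim_\F(0:_A\ell)_{d-k}.
\]
Finally, $\cdot\ell$ maps $A_{d-k}$ onto $A_{d-k+1}$ (Theorem~\ref{alg-g-thm}, as $\lfloor d/2\rfloor\le d-k\le d-1$), so the Dehn--Sommerville relations give $\dim_\F(0:_A\ell)_{d-k}=h_{d-k}-h_{d-k+1}=h_k-h_{k-1}=g_k(\Delta)$, which finishes the proof.

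The step I expect to require the most care is the middle one: arranging the two surjections and their degree shifts correctly, and --- more fundamentally --- justifying that the number of degree-$j$ minimal generators of $\bar J$ is exactly $m_{j-1}(\Delta)$, i.e.\ that generic Artinian reduction merges no minimal generators. This recovers Nagel's argument: by Gorenstein duality for the homology sphere $\Delta$ one has $m_{d-k}(\Delta)=\beta^S_{N-1,\,N+k-1}(A)$, and the module $(0:_A\ell)=\operatorname{Tor}^S_1\!\bigl(A,\,S/(\ell)\bigr)$ is precisely what links this graded Betti number to $g_k$; the computation above is a hands-on form of the change-of-rings spectral sequence $\operatorname{Tor}^{S/(\ell)}_p\!\bigl(\operatorname{Tor}^S_q(A,S/(\ell)),\F\bigr)\Rightarrow\operatorname{Tor}^S_{p+q}(A,\F)$.
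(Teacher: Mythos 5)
Your argument is correct, and the bookkeeping checks out at every step. The paper itself does not supply a proof of Proposition~\ref{prop: enhanced upper bound g-theorem}; it simply cites Nagel \cite[Corollary 4.6(a)]{Nagel}. So there is no ``paper proof'' to compare against---what you have given is a self-contained derivation of the cited result.

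Concretely: since $\Theta$ is a regular sequence on both $\F[X]$ and $\F[\Delta]$, the graded Betti numbers pass down unchanged, so $\dim(\bar J_{d-k+1}/S_1\bar J_{d-k})=\beta_{1,d-k+1}^S(A)=\beta_{1,d-k+1}^{\F[X]}(\F[\Delta])=m_{d-k}(\Delta)$ because the squarefree monomials of missing faces minimally generate $I_\Delta$. The hypothesis $k\le\lceil d/2\rceil-1$ gives $d-k\ge\lfloor d/2\rfloor+1$, so both $(A/\ell A)_{d-k}$ and $(A/\ell A)_{d-k+1}$ vanish by Theorem~\ref{alg-g-thm}, which yields the two surjections you use; $S$ is a polynomial ring (hence a domain) because $\Theta$ consists of $d$ linearly independent linear forms, so the kernels are exactly $K$ and $L$ as you describe, and the dimension count $m_{d-k}=\dim(K/L)$ follows. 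The chain $\bar J_{d-k}\subseteq L\subseteq K$ and the identification $K/\bar J_{d-k}\cong(0:_A\ell)_{d-k}$ are immediate, and the surjectivity of $\cdot\ell:A_{d-k}\to A_{d-k+1}$ (valid since $\lfloor d/2\rfloor\le d-k\le d-1$) together with Dehn--Sommerville closes it out: $\dim(0:_A\ell)_{d-k}=h_{d-k}-h_{d-k+1}=h_k-h_{k-1}=g_k$. This is indeed a hands-on rendering of Nagel's Tor/change-of-rings computation, and I see no gap. One small remark for presentation: the genericity of $\Theta$ is used only to invoke Theorem~\ref{alg-g-thm}; the Betti-number transfer and the identification of $m_{d-k}$ with minimal generators of $\bar J$ only need $\Theta$ to be an l.s.o.p. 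Worth saying explicitly so a reader does not wonder whether genericity is doing hidden work in the ``no merging of generators'' step you flagged.
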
	
	
 \begin{remark}\label{rm: g_k>0}
Let $k\leq d/2$ and assume that $\Delta$ is a $(d-1)$-sphere with $g_k=0$. Then, by the part of Theorem~\ref{thm: GLBT} asserting that $\Delta(k-1)=\Delta(d-k)$, $\Delta$ has no missing $j$-faces for any $k\leq j\leq d-k$. 
On the other hand, since $\Delta(d-k)$ is $d$-dimensional  (see Theorem~\ref{thm: GLBT}) and therefore strictly contains $\Delta$, it follows that $\Delta$ must have a missing face of dimension $\geq d-k+1$. In particular, any sphere in $S(d-k,d-1)$ satisfies $g_k\geq 1$.
 \end{remark}

In this paper, we are interested in $(d-1)$-spheres with $g_k=1$. By combining Propositions~\ref{thm2: Murai-Novik-Zheng} and \ref{prop: enhanced upper bound g-theorem} with Theorem~\ref{thm: GLBT}, we obtain the following result:

	\begin{corollary} \label{cor:Nagel}
	Let $k< \lfloor d/2\rfloor$ and let $\Delta$ be a $(d-1)$-sphere with $g_k(\Delta)=1$. Then $m_{d-k}(\Delta)\leq 1$. Furthermore, when $k\leq \lfloor d/2\rfloor-1$, we have $m_{d-k}(\Delta)=1$ if and only if $g_{k+1}(\Delta)=0$.
	\end{corollary}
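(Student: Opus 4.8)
The plan is to deduce the corollary directly from three results already on the table: Nagel's enhanced upper bound (Proposition~\ref{prop: enhanced upper bound g-theorem}), the $m$-number criterion for $k$-stackedness (Proposition~\ref{thm2: Murai-Novik-Zheng}), and the equality case of the GLBT (Theorem~\ref{thm: GLBT}); essentially all that is required is to check that the ranges of the index $k$ line up. First I would observe that $k\geq 1$ (implicit in the statement) together with $k<\lfloor d/2\rfloor$ forces $\lfloor d/2\rfloor\geq 2$, hence $d\geq 4$, and also $k\leq\lfloor d/2\rfloor-1\leq\lceil d/2\rceil-1$, so all three cited results are applicable.

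For the bound $m_{d-k}(\Delta)\leq 1$, I would simply invoke Proposition~\ref{prop: enhanced upper bound g-theorem} (valid since $1\leq k\leq\lceil d/2\rceil-1$), which gives $m_{d-k}(\Delta)\leq g_k(\Delta)=1$.

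For the ``furthermore'' part, assume $k\leq\lfloor d/2\rfloor-1$, and combine two equivalences. Applying Theorem~\ref{thm: GLBT} with $k+1$ in place of $k$ (legitimate since $1\leq k+1\leq d/2$) shows that $g_{k+1}(\Delta)=0$ if and only if $\Delta$ is $k$-stacked. Applying Proposition~\ref{thm2: Murai-Novik-Zheng} (valid since $1\leq k\leq\lfloor d/2\rfloor-1$) shows that $\Delta$ is $k$-stacked if and only if $g_k(\Delta)=m_{d-k}(\Delta)$; since $g_k(\Delta)=1$, this last condition is exactly $m_{d-k}(\Delta)=1$. Chaining the two equivalences yields $g_{k+1}(\Delta)=0$ if and only if $m_{d-k}(\Delta)=1$. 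There is no genuine obstacle in this argument---it is pure bookkeeping---so the only point requiring care is keeping the index ranges consistent with the hypotheses of the three inputs (in particular, remembering that Propositions~\ref{thm2: Murai-Novik-Zheng} and~\ref{prop: enhanced upper bound g-theorem} are stated only for $d\geq 4$, which, as noted above, is automatic here).
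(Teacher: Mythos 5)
Your proof is correct and takes essentially the same route as the paper, which states the corollary as an immediate consequence of combining Propositions~\ref{thm2: Murai-Novik-Zheng} and~\ref{prop: enhanced upper bound g-theorem} with Theorem~\ref{thm: GLBT} without spelling out the bookkeeping. Your careful verification of the index ranges (in particular that $k\geq 1$ and $k<\lfloor d/2\rfloor$ force $d\geq 4$, and that $k+1\leq d/2$) is exactly the checking the paper leaves implicit.
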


	\subsection{The stress spaces}
	In this subsection, we review the basics of linear and affine stresses. For further details, we refer the reader to \cite{Lee94,Lee96} and \cite{Tay-et-al-I, Tay-et-al}, and to \cite[Section 2.3]{MNZ} for a more algebraic exposition.
	
	Let $\F\subseteq \R$ be a field, and let $\Delta$ be a $(d-1)$-dimensional simplicial complex with vertex set $V=V(\Delta)$. A map $p: V(\Delta) \rightarrow \F^d\subseteq \R^d$ is called a \textit{$d$-embedding} of $\Delta$. 
	
	Continuing with the notation of Section 2.1, let $X =\{x_v : v\in V\}$ and let $\F[X]$ be the corresponding ring of polynomials. Each variable $x_v$ acts on $\F[X]$ by $\frac{\partial}{\partial{x_v}}$; for brevity, we denote this operator by $\partial_{x_v}$. More generally, if $\mu=x_{i_1}\cdots x_{i_s}\in \F[X]$ is a monomial, define $\partial_\mu : \F[X] \to \F[X]$ by $\rho \mapsto \partial_{x_{i_1}}\cdots\partial_{x_{i_s}}\rho$. If $\ell =\sum_{v\in V} \ell_v x_v$ is a linear form in $\F[X]$, define
	$$\partial_{\ell} : \F[X]\to\F[X] \quad \mbox{by} \quad \rho \mapsto \sum_{v\in V}\ell_v\cdot\partial_{x_v}\rho=\sum_{v\in V}\ell_v\frac{\partial \rho}{\partial{x_v}}.$$
	For a monomial $\mu\in \F[X]$, the {\em support} of $\mu$ is $\supp(\mu)=\{v\in V: \;x_v|\mu\}$.
	
	A $d$-embedding $p$ of $\Delta$ gives rise to the sequence  $\Theta(p)=(\theta_1, \dots, \theta_d)$ of $d$ linear forms, where $\theta_j=\sum_{v\in V} p(v)_j\ x_v$. Now, let $\F=\Q(a_{v, j}: v\in V, j\in [d])$, where $\{a_{v, j}\in\R: v\in V, j\in [d]\}$ is algebraically independent over $\Q$. In this case, the embedding $p: V \to \F^d$ defined by $v\mapsto p(v)=(a_{v,1},\dots a_{v,d})$, is called a {\em generic} embedding. 
	
	A homogeneous polynomial $\lambda=\lambda(X)=\sum_\mu \lambda_\mu \mu\in\F[X]$ of degree $k$ is called a {\em linear  $k$-stress} on $(\Delta, p)$ if it satisfies the following conditions:
	\begin{itemize}
		\item Every nonzero term $\lambda_\mu \mu$ of $\lambda$ is supported on a face of $\Delta$, that is, $\supp(\mu)\in\Delta$, and
		\item $\partial_{\theta_i}\lambda=0$ for all $i=1,\ldots, d$.
	\end{itemize}
	Recall that $c=\sum_{v\in V}x_v$. A linear $k$-stress $\lambda$ on $(\Delta, p)$  that also satisfies $\partial_{c}\lambda=\sum_{v\in V}\partial_{x_v}\lambda=0$ is called an {\em affine $k$-stress}. 
	
	It follows immediately from the definitions that the sets of linear $k$-stresses and affine $k$-stresses on $\Delta$ form vector spaces over $\F$, which we denote by $\Stress^\ell_k(\Delta,p)$ and $\Stress^a_k(\Delta, p)$, respectively. When we wish to emphasize the underlying field, we write $\Stress^\ell_k(\Delta,p;\F)$ and $\Stress^a_k(\Delta, p;\F)$. Furthermore, for all $k$, the space $\Stress^a_k(\Delta,p)$ is the kernel of $\partial_{c} : \Stress^\ell_k(\Delta,p) \to  \Stress^\ell_{k-1}(\Delta,p)$.
	
	The spaces $\Stress^\ell_k(\Delta, p)$ and $\Stress^a_k(\Delta, p)$ coincide with  certain graded components of the Macaulay inverse system of  $I_\Delta+(\Theta(p))$ and $I_\Delta+(\Theta(p),c)$, respectively; see \cite[Section 2.3]{MNZ} for details. Consequently, the dimensions of $\Stress^\ell_k(\Delta,p)$ and  $\Stress^a_k(\Delta, p)$ agree with the dimensions of the $k$-th graded components of $\F[\Delta]/(\Theta(p))$ and $\F[\Delta]/(\Theta(p),c)$, respectively (see \cite{Lee96} or \cite{MNZ}). In particular, the following (weaker) restatement of the $g$-theorem (Theorem~\ref{alg-g-thm}) in the language of stress spaces holds:

	\begin{theorem}\label{thm: Lefschetz}
	Let $(\Delta, p)$ be a $(d-1)$-sphere with a generic embedding $p$, and let $1\leq k\leq \lceil d/2\rceil$. 
	Then $\dim \Stress_k^\ell(\Delta,p)=\dim \Stress_{d-k}^\ell(\Delta,p)=h_k(\Delta)$, $\dim \Stress_k^a(\Delta,p)=g_k(\Delta)$, and the linear map $\partial_{c}: \Stress^\ell_k(\Delta, p) \to \Stress^\ell_{k-1}(\Delta, p)$ is surjective. In particular, if $d$ is odd and $k=(d+1)/2$,  this map is an isomorphism.
	\end{theorem}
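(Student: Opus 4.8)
The plan is to obtain the theorem directly from the algebraic $g$-theorem (Theorem~\ref{alg-g-thm}), together with the identification---recalled in the paragraph preceding the statement and due to Lee \cite{Lee96} (see also \cite{MNZ})---of $\Stress^\ell_k(\Delta,p)$ and $\Stress^a_k(\Delta,p)$ with the degree-$k$ graded components of the Macaulay inverse systems of $I_\Delta+(\Theta(p))$ and $I_\Delta+(\Theta(p),c)$, respectively. First I would choose the generic embedding $p$ so that $\Theta(p)$ is exactly the generic l.s.o.p.\ $\Theta$ of Section~2.2; then $\F[\Delta]/(\Theta(p))$ is the generic artinian reduction of $\F[\Delta]$ and $\F[\Delta]/(\Theta(p),c)$ its quotient by the canonical Lefschetz element. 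Since $\F[\Delta]$ is Cohen--Macaulay, $\dim_\F (\F[\Delta]/(\Theta(p)))_k = h_k(\Delta)$, so the cited identification yields $\dim \Stress^\ell_k(\Delta,p) = h_k(\Delta)$, and the Dehn--Sommerville relations give $\dim \Stress^\ell_{d-k}(\Delta,p) = h_{d-k}(\Delta) = h_k(\Delta)$. In the same way, $\dim \Stress^a_k(\Delta,p) = \dim_\F (\F[\Delta]/(\Theta(p),c))_k = g_k(\Delta)$ by the last sentence of Theorem~\ref{alg-g-thm}.

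For the surjectivity of $\partial_{c}$, the key input is apolarity duality: under the perfect pairing between the degree-$j$ component of an artinian quotient of $\F[X]$ and the degree-$j$ component of its inverse system, multiplication by $c$ from degree $k-1$ to degree $k$ is the transpose of the contraction map $\partial_{c}\colon \Stress^\ell_k(\Delta,p)\to\Stress^\ell_{k-1}(\Delta,p)$, so the former is injective if and only if the latter is surjective. Theorem~\ref{alg-g-thm} states that multiplication by $c$ on $(\F[\Delta]/(\Theta(p)))_j$ is injective for all $0\le j\le \lceil d/2\rceil-1$; applying this with $j=k-1$ for $1\le k\le\lceil d/2\rceil$ gives precisely the asserted surjectivity of $\partial_{c}$. (One could instead argue by dimensions: since $\Stress^a_k(\Delta,p)=\ker\partial_{c}$, the previous paragraph gives $\dim \partial_{c}(\Stress^\ell_k(\Delta,p))=h_k(\Delta)-g_k(\Delta)=h_{k-1}(\Delta)=\dim\Stress^\ell_{k-1}(\Delta,p)$, using $g_k=h_k-h_{k-1}$ for all $1\le k\le\lceil d/2\rceil$.)

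Finally, when $d$ is odd and $k=(d+1)/2=\lceil d/2\rceil$, we have $d-k=k-1$, so Dehn--Sommerville gives $h_k(\Delta)=h_{d-k}(\Delta)=h_{k-1}(\Delta)$ and hence $g_k(\Delta)=0$; thus $\Stress^a_k(\Delta,p)=\ker\partial_{c}=0$, so the surjection $\partial_{c}\colon\Stress^\ell_k(\Delta,p)\to\Stress^\ell_{k-1}(\Delta,p)$ is also injective, i.e.\ an isomorphism. I do not expect a real obstacle here: the statement is essentially a dictionary translation of Theorem~\ref{alg-g-thm}, and the only step demanding care is invoking the inverse-system/apolarity correspondence in the correct direction---multiplication by a linear form corresponds to differentiation, and injectivity to surjectivity---which is exactly the content of the identification cited from \cite{Lee96,MNZ}.
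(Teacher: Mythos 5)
Your proposal is correct and takes essentially the same route as the paper: the paper treats Theorem~\ref{thm: Lefschetz} as an immediate consequence of the inverse-system identification of $\Stress^\ell_k$ and $\Stress^a_k$ with the degree-$k$ components of $\F[\Delta]/(\Theta(p))$ and $\F[\Delta]/(\Theta(p),c)$ (citing \cite{Lee96,MNZ}) together with Theorem~\ref{alg-g-thm}, which is exactly the dictionary you spell out. Your apolarity-duality argument for the surjectivity of $\partial_c$ (and the equivalent dimension count via $\dim\Stress^a_k=g_k$ and $g_k=h_k-h_{k-1}$) and the Dehn--Sommerville computation $g_{(d+1)/2}=0$ in the odd case are precisely the unstated details behind the paper's ``in particular.''
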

 
 	Let $\lambda= \sum_\mu \lambda_\mu \mu$ be either a $k$-linear or a $k$-affine stress on $(\Delta,p)$, and let $\sigma$ be a $(k-1)$-face of $\Delta$. We write $x_\sigma:=\prod_{v\in \sigma} x_v$ and $\lambda_\sigma:= \lambda_{x_\sigma}$. 
 	By \cite[Theorems 9,11]{Lee96}, a $k$-stress $\lambda$ is uniquely determined by its squarefree part $\sum_{\sigma\in\Delta, |\sigma|=k}\lambda_\sigma x_\sigma$.
 	
	We write $\supp(\lambda)$ to denote the subcomplex of $\Delta$ generated by all $(k-1)$-faces $\sigma$ with $\lambda_\sigma \neq 0$. We also say that $\lambda$ {\em lives on}  a subcomplex $\Gamma\subseteq \Delta$ if $\supp(\lambda)\subseteq \Gamma$. For instance, $\partial_{x_v}\lambda$ is a $(k-1)$-stress that lives on $\st(v)$. Finally, we say that a face $\tau$ {\em participates in} $\lambda$ or that $\tau$ is {\em in the support of} $\lambda$ if $\tau\in \supp(\lambda)$.

	\section{The cone lemmas}
	Assume $\Lambda$ is a simplicial complex of dimension $d-2$ and let $\Gamma$ be the cone over $\Lambda$. It is known that, for appropriately chosen embeddings of $\Lambda$ in $\R^{d-1}$ and of $\Gamma$ in $\R^d$, the stress spaces of $\Lambda$ and $\Gamma$ are isomorphic; see, for instance, \cite[Theorem 10]{Lee96}, \cite[Lemma 3.2]{NZ-reconstruction}. Statements of this form are referred to as {\em cone lemmas} in the literature. The importance of the following version of the cone lemma is that it allows us to describe how the support of a stress on $\Lambda$ relates to the support of its image on $\Gamma$.
	
	\begin{lemma}\label{cone lemma-linear}
		Let $\Lambda$ be a $(d-2)$-dimensional simplicial complex with $V(\Lambda)=[n]$, and let $\Gamma=0*\Lambda$ be the cone over $\Lambda$. Let $p': [n]\to \R^{d-1}$ be an embedding of $\Lambda$ such that the set of coordinates $\{p'(s)_t : s\in [n], t\in [d-1]\}$ is algebraically independent over $\Q$, and define $\mathbb{F}'=\Q(p'(s)_t: s\in[n], \, t\in [d-1])$. Let $a_1, \dots, a_n\in \R$ be algebraically independent over $\mathbb{F}'$, and let $\F\subseteq \R$ be any field that contains $\mathbb{F}'(a_1, \dots, a_n)$. Define an embedding $p: \{0, 1, \dots, n\}\to\F^d$ of $\Gamma$ by $$p(0)=(0, \dots, 0, -1), \quad p(s)=((1+a_s)p'(s), a_s), \; \text{for}\; s\in [n].$$ Then, for all $0\leq i\leq d-1$, there exists a linear map of $\mathbb{F}'$-vector spaces $\psi_i: \Stress^\ell_i(\Lambda, p'; \mathbb{F}') \to \Stress^\ell_i(\Gamma, p; \mathbb{F})$ such that: 1) for any $\omega'\in \Stress^\ell_i(\Lambda, p'; \mathbb{F}')$, $\supp(\psi_i(\omega'))=\skel_{i-1}\big(0*\supp(\omega')\big)$, and 2) if $c'=\sum_{j=1}^n x_j$ and $c=\sum_{j=0}^n x_j$, then $\psi_{i-1}(\partial_{c'}\omega')=\partial_c(\psi_i(\omega'))$.
	\end{lemma}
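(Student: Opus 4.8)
The plan is to construct $\psi_i$ explicitly by lifting a linear $i$-stress $\omega'$ on $(\Lambda, p')$ to a polynomial in $\F[x_0, x_1, \dots, x_n]$, using the special form of the embedding $p$ to make the computation transparent. Write $\omega' = \sum_{\mu} \omega'_\mu \mu$ with each $\supp(\mu) \in \Lambda$. The l.s.o.p.\ for $\F[\Gamma]$ attached to $p$ consists of the $d$ forms $\eta_t = \sum_{s \in [n]} (1+a_s) p'(s)_t\, x_s$ for $t \in [d-1]$, together with $\eta_d = -x_0 + \sum_{s \in [n]} a_s x_s$. The key observation is that, modulo the substitution $x_s \mapsto (1+a_s)^{-1} x_s$ (formally: rescaling the variables), the first $d-1$ forms become exactly the forms $\theta_t = \sum_s p'(s)_t x_s$ that define stresses on $(\Lambda, p')$, and the last form $\eta_d$ becomes the equation cutting $x_0$ down. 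So the natural definition is: substitute $x_s = (1+a_s)^{-1} y_s$ for $s \in [n]$ into $\omega'$, obtaining a polynomial $\tilde\omega'$ in the $y_s$, and then set $\psi_i(\omega')$ to be the unique lift of the squarefree part of $\tilde\omega'$ to a stress on $\Gamma$ that is annihilated by $\partial_{\eta_d}$ — concretely, one shows $\partial_{x_0}$ acts on $\psi_i(\omega')$ by "apply $\partial_{c'}$ to the $\Lambda$-part and reweight," so $\psi_i(\omega')$ is a finite sum $\sum_{j \geq 0} x_0^j \cdot (\text{scaled } (\partial_{c'})^j\omega'\text{-type term})$ with rational coefficients in the $a_s$. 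One then verifies directly that $\partial_{\eta_t}(\psi_i(\omega')) = 0$ for $t \in [d-1]$ (this reduces to $\partial_{\theta_t}\omega' = 0$ after the rescaling) and $\partial_{\eta_d}(\psi_i(\omega')) = 0$ (by construction of the $x_0$-tail). That gives a well-defined $\F'$-linear map into $\Stress^\ell_i(\Gamma, p; \F)$; $\F'$-linearity is clear since the construction is linear in $\omega'$ with coefficients not involving $\omega'$.

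Next I would establish property 1), the support statement $\supp(\psi_i(\omega')) = \skel_{i-1}(0 * \supp(\omega'))$. The inclusion $\subseteq$ is immediate: every monomial appearing in $\psi_i(\omega')$ has support either a face of $\supp(\omega')$ (the $j=0$ part, which is a nonzero rescaling of $\omega'$ itself) or $\{0\}$ joined with a face of $\supp(\partial_{c'}^j\omega') \subseteq \supp(\omega')$. For the reverse inclusion I need that no cancellation occurs: an $(i-1)$-face $\sigma$ of $\supp(\omega')$ still appears with nonzero coefficient in the $j=0$ part (true, since that part is $\omega'$ with each $x_s$ rescaled by the unit $(1+a_s)^{-1}$, so $\psi_i(\omega')_\sigma = (\prod_{s\in\sigma}(1+a_s)^{-1})\,\omega'_\sigma \neq 0$); and that every $(i-1)$-face of $0 * \supp(\omega')$ containing $0$, say $0 \cup \tau$ with $\tau$ an $(i-2)$-face of $\supp(\omega')$, appears with nonzero coefficient — for this I use that $\tau$ lies in some $(i-1)$-face $\sigma$ of $\supp(\omega')$, hence $\omega'_\sigma \neq 0$, and the coefficient of $x_0 x_\tau$ in $\psi_i(\omega')$ is, up to a nonzero scalar, $\sum_{s: \tau \cup s \in \Lambda}(\text{scalar}_s)\,\omega'_{\tau\cup s}$ — the issue is that a priori this could vanish. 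Here the algebraic independence of the $a_s$ over $\F'$ saves the day: the coefficients multiplying the various $\omega'_{\tau\cup s}$ are distinct monomials (or otherwise $\F'$-algebraically-independent quantities) in the $a_s$, while the $\omega'_{\tau\cup s}$ lie in $\F'$, so the sum cannot vanish unless all $\omega'_{\tau \cup s} = 0$, contradicting $\omega'_\sigma \neq 0$. This non-degeneracy argument — showing no accidental cancellation in the $x_0$-tail using the generic $a_s$ — is the part I expect to require the most care, and it is why the hypothesis that $a_1, \dots, a_n$ are algebraically independent over $\F'$ is in the statement.

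Finally, property 2), the compatibility $\psi_{i-1}(\partial_{c'}\omega') = \partial_c(\psi_i(\omega'))$, should fall out of the construction: $\partial_c = \partial_{x_0} + \partial_{c'}$ where here $c' = \sum_{s\in[n]} x_s$ acts on the $x_s$-variables, and the point is that the "$x_0$-tail" of $\psi_i(\omega')$ was defined precisely so that $\partial_{x_0}$ plus the reweighted $\partial_{c'}$ telescopes to the $\psi_{i-1}$-lift of $\partial_{c'}\omega'$. Concretely, after the rescaling $x_s \mapsto (1+a_s)^{-1}y_s$ one checks that applying $\partial_c$ to $\psi_i(\omega')$ produces a stress on $\Gamma$ whose squarefree $y$-part equals the squarefree part of the rescaled $\partial_{c'}\omega'$, and whose $x_0$-tail matches the defining recursion for $\psi_{i-1}(\partial_{c'}\omega')$; since a stress is determined by its squarefree part (by \cite[Theorems 9, 11]{Lee96}) together with being annihilated by $\partial_{\eta_d}$, the two sides agree. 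The only subtlety is bookkeeping the rescaling factors $(1+a_s)^{-1}$ consistently, and checking the base case $i = 0$ (where both sides are essentially the constant stress) and that the tail is a finite sum (it terminates because $(\partial_{c'})^j \omega' = 0$ for $j > i$). Wrapping up, one notes that $\psi_i$ is injective (its $j=0$ part already recovers a nonzero rescaling of $\omega'$), so in particular it realizes the abstract cone-lemma isomorphism of \cite[Theorem 10]{Lee96} refined with the support information.
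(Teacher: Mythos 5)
Your proposal is correct and follows essentially the same route as the paper's proof: rescale $x_s\mapsto(1+a_s)^{-1}x_s$ so that the $\Lambda$-l.s.o.p.\ reappears, build the $x_0$-tail by a recursion in the $a_s$-weighted derivative, and then use the algebraic independence of the $a_s$ over $\F'$ to rule out cancellation in the coefficients of monomials $x_0x_\tau$, exactly as the paper does via the explicit computation of $\partial_{x_{G\cup 0}}\psi_i(\omega')$. The compatibility with $\partial_c$ is likewise verified in both by noting that a stress on $\Gamma$ is determined by its low-order part (the paper uses the $x_0$-constant term together with the recursion; you invoke the squarefree part via Lee's theorem, which comes to the same thing).
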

	
	\noindent In other words, statement 2) implies that the following diagram commutes:
	\[ \begin{tikzcd}
		\Stress^\ell_i(\Lambda, p'; \F') \arrow{r}{\psi_i} \arrow[swap]{d}{\partial_{c'}} & \Stress^\ell_i(\Gamma, p; \F)  \arrow{d}{\partial_c} \\
		\Stress^\ell_{i-1}(\Lambda, p'; \F')\arrow{r}{\psi_{i-1}}& \Stress^\ell_{i-1}(\Gamma, p; \F)
	\end{tikzcd}.
	\]
	We refer to the map $\psi_i$ as the {\em lifting} map.
	
	\begin{proof} 
	The construction of $\psi_i$ is similar to that of \cite[Lemma 3.2]{NZ-reconstruction}, and so we omit some details.
		 In our setting, the l.s.o.p.~for $\F[\Gamma]$ corresponding to $p$ is given by $\theta_t=\sum_{s=1}^n (1+a_s)p'(s)_t x_s$, for $1\leq t\leq d-1$, and $\theta_d=-x_0+\sum_{s=1}^n a_s x_s$. Given $\omega'\in \Stress^\ell_i(\Lambda, p'; \mathbb{F}')$, define  polynomials in $\F[x_1,\dots,x_n]$ recursively by
		\begin{eqnarray} \nonumber
		\omega_0(x_1, \dots, x_n)&=&\omega'\left(\frac{x_1}{1+a_1}, \dots, \frac{x_n}{1+a_n}\right), \quad \mbox{and}\\
		 \label{omega_j}
			\omega_{j+1}&=&\frac{1}{j+1}\sum_{s=1}^n a_s \, \partial_{x_s}\omega_j, \;\; \forall \; 0\leq j\leq i-1.
		\end{eqnarray}
		In particular, $\omega_1=\sum_{s=1}^n a_s \, \partial_{x_s}\omega_0.$ We then define $$\psi_i(\omega'):=\sum_{j=0}^i x_0^j\cdot \omega_j(x_1,\dots, x_n)\in\F[x_0,x_1,\dots,x_n].$$
	Computations analogous to those in \cite[Lemma 3.2]{NZ-reconstruction} show that $\psi_i(\omega')$ is in $\Stress^\ell_i(\Gamma, p; \mathbb{F})$. Hence, $\psi_i$ is a well-defined map from $\Stress^\ell_i(\Lambda, p'; \mathbb{F}')$  to $\Stress^\ell_i(\Gamma, p; \mathbb{F})$. In fact, these computations show that if $\lambda=\sum_{j=0}^i x_0^j \cdot \lambda_j$ is any element of $\Stress_i^\ell(\Gamma,p;\F)$, where $\lambda_j\in\F[x_1,\dots,x_n]$, then for all $0\leq j\leq i-1$, $\lambda_{j+1}$ is determined from $\lambda_0$ via  equation~\eqref{omega_j}. This holds independently of whether $\lambda$ lies in the image of $\psi_i$. 

		To see that $\psi_{i-1}(\partial_{c'}\omega')=\partial_c(\psi_i(\omega'))$, note that  $$\partial_c(\psi_i(\omega'))=\partial_{c'}\omega_0+\sum_{j= 1}^i j x_0^{j-1} \cdot\omega_j+\sum_{j=1}^i x_0^j\cdot\partial_{c'}\omega_j.$$
		In particular, $$
		\partial_{c'}\omega_0+\omega_1=\sum_{s=1}^n (1+a_s)\, \partial_{x_s}\omega_0=\sum_{s=1}^n(\partial_{x_s}\omega')\left(\frac{x_1}{1+a_1}, \dots, \frac{x_n}{1+a_n}\right) =(\partial_{c'}\omega')\left(\frac{x_1}{1+a_1},\dots,\frac{x_n}{1+a_n}\right)$$
		is the constant term of $\psi_{i-1}(\partial_{c'} (\omega'))$ expanded as a polynomial in $x_0$. Since both $\psi_{i-1}(\partial_{c'}\omega')$ and $\partial_c(\psi_i(\omega'))$ are elements of $\Stress_{i-1}(\Gamma,p;\F)$, and since, by the discussion in the previous paragraph, any linear stress on $\Gamma$ is determined by its constant  term, we conclude that these stresses are identical.
		
		It only remains to prove that for $i\geq 1$, $\supp(\psi_i(\omega'))=\skel_{i-1}\big(0*\supp(\omega')\big)$. The definition of $\omega_0$ ensures that every $(i-1)$ face that is in $\supp(\omega')$ is also in $\supp(\psi_i(\omega'))$. Assume now that $G$ is an $(i-2)$ face in $\supp(\omega')$. To complete the proof, we have to show that $G\cup 0$ is in $\supp(\psi_i(\omega'))$.
		Write $\omega'=(\sum_{j=1}^n\ell_jx_j)x_G+\alpha$, where $\ell_j\in \F'$ and no term of $\alpha$ is divisible by $x_G$. (When $i=1$, we have $G=\emptyset$, $\alpha=0$, and $\omega'=\sum_{j=1}^n \ell_jx_j$.)
		The assumption that $G\in \supp(\omega')$ guarantees that at least one of $\ell_j$ is nonzero.
		We then obtain:
		\begin{eqnarray*}
			\partial_{x_{G\cup 0}}(\psi_i(\omega')) &=& \partial_{x_G} \omega_1(x_1, \dots, x_n)=\partial_{x_G}\left[\sum_{s=1}^n a_s\,\partial_{x_s}\omega_0(x_1, \dots, x_n) \right] \\
			&=& \partial_{x_G}\left[\sum_{s=1}^n a_s\,\partial_{x_s}\left(\omega'\left(\frac{x_1}{1+a_1}, \dots, \frac{x_n}{1+a_n}\right)\right)\right]\\
			&=&	 \partial_{x_G} \left[\sum_{s=1}^n a_s\,\partial_{x_s}  \left( \sum_{j=1}^n \frac{\ell_jx_j}{1+a_j}\cdot \frac{x_G}{\prod_{k\in G} (1+a_k)}\right) \right]\\
			&=&  \frac{1}{\prod_{k\in G} (1+a_k)}\left[ \sum_{j\in G} \frac{2\ell_ja_j}{1+a_j}+\sum_{j\notin G} \frac{\ell_ja_j}{1+a_j}\right].
		\end{eqnarray*}
		Since $\ell_j\in \mathbb{F}'$ and at least one of them is nonzero, and since $a_1,\ldots, a_n$ are algebraically independent over $\F'$, we conclude that $\partial_{x_{G\cup0}} (\psi_i(\omega'))\neq 0$. Thus, $G\cup 0 \in \supp(\psi_i(\omega'))$.
	\end{proof}	
	
	Since the maps $\psi_i$, $0\leq i\leq d-1$, of Lemma~\ref{cone lemma-linear} satisfy $\psi_{i-1}(\partial_{c'}\omega')=\partial_c(\psi_i(\omega'))$, it follows that the restriction of $\psi_i$ to $\Stress_i^a(\Lambda, p';\F')$ defines a map from  $\Stress_i^a(\Lambda, p';\F')$ to $\Stress_i^a(\Gamma, p;\F)$. This implies the following affine version of Lemma~\ref{cone lemma-linear}, which will be used in Sections 4 and 5.

	\begin{lemma} \label{cone lemma-affine}
	Under the assumptions of Lemma~\ref{cone lemma-linear}, for all $i\leq \lfloor (d-1)/2\rfloor$, there exists a linear map of $\F'$-vector spaces $\psi_i: \Stress_i^a(\Lambda, p';\F') \to \Stress_i^a(\Gamma, p;\F)$ such that for any $\omega'\in\Stress_i^a(\Lambda, p';\F')$, $\supp(\psi_i(\omega'))=\skel_{i-1}\big(0*\supp(\omega')\big)$.
	\end{lemma}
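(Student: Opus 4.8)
The plan is to obtain the statement as an essentially formal consequence of Lemma~\ref{cone lemma-linear}. The key point is that, by definition, an affine $i$-stress is precisely a linear $i$-stress annihilated by the relevant $\partial_c$; that is, $\Stress_i^a(\Lambda,p';\F')=\ker\big(\partial_{c'}\colon\Stress_i^\ell(\Lambda,p';\F')\to\Stress_{i-1}^\ell(\Lambda,p';\F')\big)$, and likewise $\Stress_i^a(\Gamma,p;\F)=\ker\big(\partial_c\colon\Stress_i^\ell(\Gamma,p;\F)\to\Stress_{i-1}^\ell(\Gamma,p;\F)\big)$. The commuting square at the end of Lemma~\ref{cone lemma-linear} is exactly what is needed to carry the first kernel into the second.

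Concretely, I would fix $i\le\lfloor(d-1)/2\rfloor$ (the smaller range is all that is needed in the applications; it lies inside the range $0\le i\le d-1$ for which Lemma~\ref{cone lemma-linear} produces a lifting map). Lemma~\ref{cone lemma-linear} then gives an $\F'$-linear map $\psi_i\colon\Stress_i^\ell(\Lambda,p';\F')\to\Stress_i^\ell(\Gamma,p;\F)$. Given $\omega'\in\Stress_i^a(\Lambda,p';\F')$, we have $\partial_{c'}\omega'=0$, so statement~2) of Lemma~\ref{cone lemma-linear} yields $\partial_c(\psi_i(\omega'))=\psi_{i-1}(\partial_{c'}\omega')=\psi_{i-1}(0)=0$. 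Hence $\psi_i(\omega')\in\Stress_i^a(\Gamma,p;\F)$, and since $\psi_i$ is $\F'$-linear, restricting it gives an $\F'$-linear map $\Stress_i^a(\Lambda,p';\F')\to\Stress_i^a(\Gamma,p;\F)$, which we again denote by $\psi_i$.

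It remains only to observe that the support identity is inherited verbatim: for $\omega'\in\Stress_i^a(\Lambda,p';\F')$, the polynomial $\psi_i(\omega')$ is the same whether it is regarded as a linear or an affine stress on $(\Gamma,p)$, so the equality $\supp(\psi_i(\omega'))=\skel_{i-1}\big(0*\supp(\omega')\big)$ is precisely the one already established in Lemma~\ref{cone lemma-linear} (with the case $i=0$ being trivial). No additional computation is required.

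In short, all the real work---constructing $\psi_i$, checking that it lands in $\Stress_i^\ell(\Gamma,p;\F)$, and computing its support---was already carried out in the proof of Lemma~\ref{cone lemma-linear}, so there is no genuine obstacle here. The only point that requires attention is the bookkeeping of the two defining conditions of an affine stress: that $\psi_i(\omega')$ is a linear stress on $(\Gamma,p)$ is part of the conclusion that $\psi_i$ maps into $\Stress_i^\ell(\Gamma,p;\F)$, while the additional condition $\partial_c(\psi_i(\omega'))=0$ is supplied by the commuting square. If one wishes, one can also record that $\psi_i$ is injective---the constant term $\omega_0$ of $\psi_i(\omega')$ as a polynomial in $x_0$ recovers $\omega'$ via $\omega'(x_1,\dots,x_n)=\omega_0\big((1+a_1)x_1,\dots,(1+a_n)x_n\big)$---but this is not needed for the statement.
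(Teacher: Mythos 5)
Your argument is exactly the paper's: restrict the lifting map $\psi_i$ from Lemma~\ref{cone lemma-linear} to the kernel of $\partial_{c'}$, use the commuting square from statement 2) to see that the image lies in the kernel of $\partial_c$, and note that the support identity from statement 1) carries over unchanged. Correct and essentially identical to the paper's proof.
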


	Consider a $(d-1)$-sphere $\Delta$ with  a generic embedding $p$. In this paper, we often apply Lemmas~\ref{cone lemma-linear} and \ref{cone lemma-affine} to the subcomplexes $\Lambda=\lk(u)$ and $\Gamma=\st(u)=u*\Lambda$ of $\Delta$, where $u\in V(\Delta)$ is a vertex. If $2k<d$, then by the partition of unity for affine stresses \cite{NZ-Aff-Reconstr}, $\Stress^a_k(\Delta, p)=\sum_{u\in V(\Delta)} \Stress^a_k(\st(u), p)$, and hence any affine $k$-stress on $\Delta$ can be expressed as a sum of affine $k$-stresses that live on vertex stars. When $2k=d$, the space $\Stress_k^a(\Delta,p)$ is much more mysterious. The following lemma, whose proof utilizes Lemma~\ref{cone lemma-linear}, sheds some light on this space in the case where $\Delta\in S(k-1,2k-1)$. This lemma will be essential in Section 5.

	\begin{lemma}\label{lm1}
		Let $\Delta\in S(k-1, 2k-1)$. Let $\{a_{s,j} : s\in V(\Delta), j\in [d]\}\subset \R$ be a set of numbers that are algebraically independent over $\Q$, and let $\F=\Q(a_{s,j} : s\in V(\Delta), j\in [d])$. Define $q:V(\Delta)\to\F^d$ by $q(s)=(a_{s,1},\dots,a_{s,d})$ for all $s\in V(\Delta)$. Then for every edge $uv\in\Delta$, there exists a stress $\bar{\omega}\in \Stress_k^{a}(\Delta,q;\F)$ with the following properties: $\supp(\bar{\omega})\subseteq \st(u)\cup\st(v)$, and there exists a face $\sigma\in \supp(\bar{\omega})$ such that $u\in \sigma$ but $\sigma\notin \st(v)$.
	\end{lemma}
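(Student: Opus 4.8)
Throughout, $d=2k$, and we fix the edge $uv\in\Delta$. \textbf{Reduction.} Since $\Delta$ is a $\Z/2\Z$-homology sphere, $\lk(u)$ and $\lk(v)$ are $\Z/2\Z$-homology $(2k-2)$-spheres, so by Dehn--Sommerville $g_k(\lk(u))=g_k(\lk(v))=0$, i.e.\ $\Stress_k^a(\lk(u),\,\cdot\,)=\Stress_k^a(\lk(v),\,\cdot\,)=0$. Applying Lemma~\ref{cone lemma-linear} to the cones $\st(u)=u*\lk(u)$ and $\st(v)=v*\lk(v)$ (so that $\partial_c\circ\psi_k=\psi_{k-1}\circ\partial_{c'}$), one gets that any affine $k$-stress of $\Delta$ whose support lies inside a single one of the stars $\st(u),\st(v)$ vanishes. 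Hence it suffices to exhibit \emph{some} nonzero $\bar\omega\in\Stress_k^a(\Delta,q)$ with $\supp(\bar\omega)\subseteq\st(u)\cup\st(v)$: for such a $\bar\omega$ we automatically have $\supp(\bar\omega)\not\subseteq\st(v)$, so writing $\bar\omega=\mu_u+\mu_v$ with $\mu_w\in\Stress_k^\ell(\st(w),q)$, the component $\mu_u$ is not supported on $\st(v)$, and by the cone-lemma support description $\supp(\mu_u)=\skel_{k-1}(u*\Gamma)$ for a subcomplex $\Gamma\subseteq\lk(u)$ not contained in $\st(v)$. A short argument using $\Delta\in S(k-1,2k-1)$ — if every $(k-2)$-face of $\Gamma$ lay in $\lk(uv)$, then some $(k-1)$-face $\tau'$ of $\Gamma$ with $\tau'\cup v\notin\Delta$ would make $\tau'\cup v$ a missing $k$-face of $\Delta$ — then produces a $(k-1)$-face $\sigma=u\cup\tau'\in\supp(\bar\omega)$ with $u\in\sigma$ and $\sigma\cup v\notin\Delta$, which is exactly what is required.

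\textbf{Construction.} Put $U:=\Stress_k^\ell(\st(u),q)+\Stress_k^\ell(\st(v),q)\subseteq\Stress_k^\ell(\Delta,q)$ and look at $\partial_c|_U$. By Lemma~\ref{cone lemma-linear}, $\partial_c\,\Stress_k^\ell(\st(w),q)=\Stress_{k-1}^\ell(\st(w),q)$ for $w\in\{u,v\}$, since the corresponding map $\partial_{c'}\colon\Stress_k^\ell(\lk(w))\to\Stress_{k-1}^\ell(\lk(w))$ is the Lefschetz isomorphism of the odd-dimensional sphere $\lk(w)$ at its upper middle degree (Theorem~\ref{thm: Lefschetz}). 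With $\Xi:=\st(u)\cap\st(v)$ and $\Stress_i^\ell(\st(u),q)\cap\Stress_i^\ell(\st(v),q)=\Stress_i^\ell(\Xi,q)$, an inclusion--exclusion count — in which the terms $g_k(\lk(u))$ and $g_k(\lk(v))$ cancel — yields
\[
\dim\bigl(U\cap\Stress_k^a(\Delta,q)\bigr)=\dim U-\dim\partial_c U=\dim\Stress_{k-1}^\ell(\Xi,q)-\dim\Stress_k^\ell(\Xi,q).
\]
So the task is reduced to proving $\dim\Stress_{k-1}^\ell(\Xi,q)>\dim\Stress_k^\ell(\Xi,q)$; any nonzero element of $U\cap\Stress_k^a(\Delta,q)$ then serves as $\bar\omega$.

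\textbf{Analysis of $\Xi$.} One checks $\Xi=\bigl(\overline{uv}*\lk(uv)\bigr)\cup\bigl(\lk(u)\cap\lk(v)\bigr)$, glued along $\lk(uv)$, with $\st(u,\Xi)=\st(v,\Xi)=\overline{uv}*\lk(uv)$, so all of $\Xi$ beyond $\overline{uv}*\lk(uv)$ avoids both $u$ and $v$. If $uv$ lies in no missing face of $\Delta$, then $\Xi=\overline{uv}*\lk(uv)$ is a ball and $\dim\Stress_{k-1}^\ell(\Xi,q)-\dim\Stress_k^\ell(\Xi,q)=h_{k-1}(\lk(uv))-h_k(\lk(uv))=g_{k-1}(\lk(uv))$, which is $\ge 1$: otherwise $\lk(uv)$ would be $(k-2)$-stacked by Theorem~\ref{thm: GLBT}, hence carry a missing face of size $\ge k+1$, which lifts to a missing face of $\Delta$ of size $\ge k+3$, contradicting $\Delta\in S(k-1,2k-1)$. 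If $uv$ does lie in a missing face, then $\Xi$ strictly contains $\overline{uv}*\lk(uv)$, $\F[\Xi]$ need no longer be Cohen--Macaulay, and one splits further: when $\st(u)\cup\st(v)\subsetneq\Delta$ one still has to establish the strict inequality for $\Xi$ (here one would control $\Stress_\bullet^\ell(\Xi,q)$ using the enhanced bounds $g_j\ge m_{d-j}$ of Corollary~\ref{cor:Nagel} and McMullen's formula, Lemma~\ref{lm: Swartz}); when $\st(u)\cup\st(v)=\Delta$ the containment $\supp(\bar\omega)\subseteq\st(u)\cup\st(v)$ is vacuous and the count gives $0$, so instead one picks a missing face $\mu\ni u,v$ (one exists, since otherwise $\Xi$ is a ball and Mayer--Vietoris forces $\widetilde H_*(\Delta)=0$), extends $\mu\setminus v$ to a $(k-1)$-face $\sigma$ inside a facet of $\Delta$ avoiding $v$ — such a facet exists because $\lk(\mu\setminus v)$ is a positive-dimensional homology sphere, hence not a cone with apex $v$ — so that $u\in\sigma$ and $\sigma\cup v\supseteq\mu\notin\Delta$, and then checks $x_\sigma\neq 0$ in $\F[\Delta]/(\Theta,c)$, which by Macaulay duality (Theorem~\ref{thm: Lefschetz}) puts $\sigma$ in the support of some affine $k$-stress.

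\textbf{Main obstacle.} The delicate point is the analysis of $\Xi=\st(u)\cap\st(v)$ when $uv$ lies in a missing face of $\Delta$: there $\F[\Xi]$ loses the Cohen--Macaulay property, so $\dim\Stress_i^\ell(\Xi,q)$ exceeds $h_i(\Xi)$ in general, and one must extract the inequality $\dim\Stress_{k-1}^\ell(\Xi,q)>\dim\Stress_k^\ell(\Xi,q)$ — or, in the degenerate subcase $\st(u)\cup\st(v)=\Delta$, the non-vanishing of $x_\sigma$ in $\F[\Delta]/(\Theta,c)$ — solely from the combinatorial rigidity of having no missing faces of dimension $\ge k$. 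The earlier steps (the single-star vanishing, the $\partial_c$-image computation on stars, and the passage from $\supp(\bar\omega)\not\subseteq\st(v)$ to the face $\sigma$) are straightforward consequences of the cone lemma and the hypothesis $\Delta\in S(k-1,2k-1)$.
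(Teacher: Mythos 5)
Your approach is genuinely different from the paper's, and its core dimension count is correct, but it has an explicit, unfilled gap precisely where the hypothesis $\Delta\in S(k-1,2k-1)$ must do its work.

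The dimension reduction is sound: with $U=\Stress_k^\ell(\st(u),q)+\Stress_k^\ell(\st(v),q)$, the identity $\Stress_i^\ell(\st(u),q)\cap\Stress_i^\ell(\st(v),q)=\Stress_i^\ell(\Xi,q)$, the cone-lemma surjectivity of $\partial_c$ on each star, and Dehn--Sommerville for $\lk(u),\lk(v)$ do give
\[
\dim\bigl(U\cap\Stress_k^a(\Delta,q)\bigr)=\dim\Stress_{k-1}^\ell(\Xi,q)-\dim\Stress_k^\ell(\Xi,q),
\]
and the passage from a nonzero such $\bar\omega$ to the face $\sigma$ is also essentially right. But the inequality you need, $\dim\Stress_{k-1}^\ell(\Xi,q)>\dim\Stress_k^\ell(\Xi,q)$, is only established when $\Xi=\overline{uv}*\lk(uv)$ is a ball, i.e.\ when $uv$ lies in no missing face. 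The case where $uv$ does lie in a missing $(\le k-1)$-face --- so $\Xi$ strictly contains $\overline{uv}*\lk(uv)$, is not Cohen--Macaulay, and $\dim\Stress_i^\ell(\Xi,q)$ need not equal $h_i(\Xi)$ --- is left as a sketch and then flagged as ``the main obstacle.'' That is not a detail; it is the case in which the hypothesis $\Delta\in S(k-1,2k-1)$ must actually earn its keep. Your subcase $\st(u)\cup\st(v)=\Delta$ is also handled only heuristically (the assertion ``the count gives $0$'' is not justified, and showing $x_\sigma\ne 0$ in $\F[\Delta]/(\Theta,c)$ for a specific squarefree monomial requires a real argument, not just Macaulay duality).

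The paper sidesteps the whole problem with $\Xi$ by never leaving $\lk(u)$. It picks a linear $(k-1)$-stress $\omega'$ on $\lk(u)$ supported on $\st(v,\lk(u))=v*\lk(uv)$ (always a ball, so $\dim\Stress_{k-1}^\ell=h_{k-1}(\lk(uv))>h_k(\lk(uv))=\dim\Stress_k^\ell$ by Remark~\ref{rm: g_k>0}), chosen so that $(\phi'_u)^{-1}(\omega')$ is not supported on $\st(v,\lk(u))$; it then lifts to $\st(u)$ and $\st(v)$ via the cone isomorphisms and subtracts, defining $\bar\omega=\iota_u(\phi_u^{-1}(\omega))-\iota_v(\phi_v^{-1}(\omega))$. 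The final support argument is where $\Delta\in S(k-1,2k-1)$ enters: a minimal interior face of $\lk(u)\setminus v$ has size $\le k-1$ (else $\Delta$ would have a too-large missing face), and the cone-lemma support description $\skel_{k-1}(C*u)$ then forces the desired $\sigma\ni u$ with $\sigma\notin\st(v)$ into the support. To repair your argument you would need to prove the $\Xi$-inequality for non-CM $\Xi$ under the missing-face hypothesis --- a real piece of work that the paper avoids entirely by the local-to-$\lk(u)$ construction.
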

	In the proof, we will consider the map $\partial_c$ acting on the stress spaces associated with $\lk(u)$, $\st(u)$, and $\st(v)$. To avoid confusion, we denote these maps by $\phi'_u$, $\phi_u$, and $\phi_v$, respectively.
	
	\begin{proof}
		Consider the $d\times |V(\Delta)|$ matrix $A$, whose columns are labeled by the vertices of $\Delta$ and whose $(j,s)$-entry is $a_{s,j}$ for all $s\in V(\Delta)$, $j\in[d]$. Fix an edge $uv$ and perform the following row operations: for each $j\in [d-1]$, divide the $j$-th row of $A$ by $a_{u,j}$, and divide the $d$-th row of $A$ by $-a_{u,d}$. Then, for each $1\leq j\leq d-1$, add the resulting $d$-th row to the resulting $j$-th row to obtain a new matrix $\tilde{A}$. Thus, for $j\in[d-1]$, the $(j,s)$-entry of $\tilde{A}$ is $\frac{a_{s,j}}{a_{u,j}}-\frac{a_{s,d}}{a_{u,d}}$, while the $(d,s)$-entry of $\tilde{A}$ is $-\frac{a_{s,d}}{a_{u,d}}$. 
		
		These operations do not change the row space of the matrix $A$ over $\F$. As a result, in $\F[x_s: s\in V(\Delta)]$, the ideal generated by the rows of $A$---that is, by $\theta_j=\sum_s a_{s,j}x_s$ for $j\in [d]$---coincides with the ideal generated by the rows of $\tilde{A}$. Consequently, for all $i$, $\Stress^\ell_i(\Delta,q;\F)$ coincides with $\Stress^\ell_i(\Delta,p;\F)$, where the embedding $p$  is given by the columns of $\tilde{A}$. 
		
		Let $p': V(\Delta)\backslash u\to\R^{d-1}$ be the embedding  given by $$p'(s)_j=\left(\frac{a_{s,j}}{a_{u,j}}-\frac{a_{s,d}}{a_{u,d}}\right)/\left(1-\frac{a_{s,d}}{a_{u,d}}\right), \; \text{for}\; j\in[d-1],$$ and define $\F'=\Q\left(p'(s)_j: s\in V(\Delta)\backslash u, j\in[d-1]\right)$. Then $\F=\F'(a_{s,d}, a_{u,j} : s\in V(\Delta), j\in [d-1])$, and the complexes $\Lambda=\lk(u)$ and $\Gamma=u*\lk(u)=\st(u)$, endowed with embeddings $p'$ and $p$, respectively, satisfy the conditions of Lemma~\ref{cone lemma-linear}. Thus, for each $i$, Lemma~\ref{cone lemma-linear} 
		provides  the lifting map $$\psi_{i}: \Stress^\ell_i(\lk(u), p'; \F')\to \Stress^\ell_{i}(\st(u),p; \F).$$

		Consider the map $$\phi'_u: \mathcal{S}^\ell_k(\lk(u), p';\F')\to \mathcal{S}^\ell_{k-1}(\lk(u),p';\F'), \quad \lambda \mapsto \partial_c \lambda.$$ Since $\lk(u)$ is a $(2k-2)$-sphere, this map is an isomorphism. Moreover, since $\Delta\in S(k-1, 2k-1)$, it follows that $\lk(uv)\in S(k-1, 2k-3)$, and hence, by Remark~\ref{rm: g_k>0}, $$0<g_{k-1}(\lk(uv))=h_{k-1}(\lk(uv))-h_{k-2}(\lk(uv))=h_{k-1}(\lk(uv))-h_{k}(\lk(uv)).$$ Since $v*\lk(uv)=\st(v,\lk(u))$, we conclude that 
	\begin{eqnarray*}\dim \Stress^\ell_{k}(\st(v, \lk(u)), p';\F')&=&h_{k}(\st(v, \lk(u)))=h_k(\lk(uv))\\
		&<& h_{k-1}(\lk(uv))=\dim \Stress^\ell_{k-1}(\st(v, \lk(u)), p';\F').
		\end{eqnarray*} 
		Hence, there exists a linear $(k-1)$-stress $\omega'\in \Stress^\ell_{k-1}(\lk(u), p';\F')$ with the following properties:  the support of $\omega'$ is a subcomplex of $v*\lk(uv)=\st(v,\lk(u))$, but the support of $(\phi'_u)^{-1}(\omega')$ contains faces that are not contained in $\st(v, \lk(u))$. We set $\omega=\psi_{k-1}(\omega')\in \Stress^\ell_{k-1}(\st(u), p;\F) \subseteq \Stress_{k-1}^\ell (\Delta, p; \F)$. 
		
		Finally, we consider the maps 
		$$\phi_u: \mathcal{S}^\ell_k(\st(u),p; \F)\stackrel{\partial c}{\to} \mathcal{S}^\ell_{k-1}(\st(u),p; \F), \quad \phi_v: \mathcal{S}^\ell_k(\st(v),p; \F)\stackrel{\partial c}{\to} \mathcal{S}^\ell_{k-1}(\st(v),p; \F).$$  These maps are also isomorphisms. Observe that, by Lemma~\ref{cone lemma-linear} and our choice of $\omega'$, the support of $\omega$ is a subcomplex of $\st(uv)$, whereas the support of $\phi_u^{-1}(\omega)=\psi_k((\phi'_u)^{-1}(\omega'))$ has some faces that are not contained in $\st(uv)$.  Thus, we may regard $\omega$ as an element of both  $\Stress^\ell_{k-1}(\st(u), p;\F)$ and $ \Stress^\ell_{k-1}(\st(v), p;\F)$. We define $\bar{\omega}:=\iota_u(\phi_u^{-1}(\omega))-\iota_v(\phi_v^{-1}(\omega))\in\Stress_k^\ell(\Delta,p;\F)$, where $\iota_u$ and $\iota_v$ are the natural inclusion maps between the stress spaces on $\st(u)$, $\st(v)$, and on $\Delta$. It is a linear $k$-stress with $\partial_c \bar{\omega}=\omega-\omega=0$, and hence it is an affine $k$-stress.  We show that $\bar{\omega}$ has the desired properties.
		
		If there is a face $\sigma\in \supp(\phi_u^{-1}(\omega))  \cap  \st(u)$ such that $u\in \sigma$ and $\sigma\notin\st(v)$, then $\sigma\notin \supp(\phi_v^{-1}(\omega))$ because $\phi_v^{-1}(\omega)$ lives on $\st(v)$. This implies that $\sigma\in \supp(\bar{\omega})$, as desired. 
		
		Otherwise, $\phi_u^{-1}(\omega)$ lives on $\st(uv)\cup \lk(u)$. Let $\sigma\in \supp(\phi_u^{-1}(\omega))\cap \lk(u)$ be a face such that $\sigma\notin\st(v)$. In other words, $\sigma$ is an interior face of the ball $\lk(u)\backslash v$, and hence $\sigma$ contains a subset $\tau$ that is a minimal interior face of $\lk(u)\backslash v$. Then $\partial \tau\subseteq \partial(\lk(u)\backslash v)=\lk(uv)$, and hence $v\cup \tau$ is a missing face of $\Delta$. Since $\Delta\in S(k-1, 2k-1)$, it follows that $\dim \tau \leq k-2$.   However, by Lemma~\ref{cone lemma-linear}, the support of  $\phi_u^{-1}(\omega)$ is of the form $\skel_{k-1}(C*u)$, where $C$ is the support of $(\phi'_u)^{-1}(\omega')$. Thus, $\tau*u\in \supp(\phi_u^{-1}(\omega))$, contradicting our assumption that $\phi_u^{-1}(\omega)$ lives on $\st(uv)\cup \lk(u)$.
	\end{proof}
	
	\begin{remark}
		In the case $k=2$, it is known that if  $\Delta\in S(1,3)$ and $p$ is a generic embedding, then for every edge $uv\in \Delta$, the framework $(\st(u)\cup \st(v), p)$ is rigid and supports a nontrival affine $2$-stress. 
		While for $k>2$ and $\Delta\in S(k-1, 2k-1)$ an analogous $(k-1)$-skeletal rigidity of the union of stars of adjacent vertices may fail, Lemma~\ref{lm1} still allows us to locate a nontrival affine $k$-stress that lives on this subcomplex.
		\end{remark}
		
	\begin{remark}
		 Lemma~\ref{lm1} holds not only for spheres in $S(k-1, 2k-1)$ but also for certain other $(2k-1)$-spheres under additional conditions on the links of $u$ and $v$.  For example, using the same proof, one can show that given any $(2k-1)$-sphere $\Delta$ with a generic embedding, there exists a nontrivial affine $k$-stress supported on $\st(u)\cup \st(v)$, where $uv$ is an edge, provided that 1) $g_{k-1}(\lk(uv))>0$, and 2) every face that lies in $\lk(u)\cap \lk(v)$ but not in $\lk(uv)$ has dimension at most $k-2$.
	\end{remark}
	
	\section{Proof of the first main result}
	We are now in a position to prove the first main result of the paper:
	
	\begin{theorem} \label{main-thm:g=1}
		Let $d\geq 2k$, and let $\Delta\in S(d-k,d-1)$. Assume further that if $d=2k$, then $\Delta\notin S(k-1,2k-1)$. Then $g_k(\Delta)=1$ if and only if one of the following holds: (1) $\Delta=\partial \sigma^{d-k}*\Gamma$, where $\Gamma$ is a $(k-1)$-sphere, or (2) $\Delta=\partial \sigma^j *\partial \sigma^{d-j}$ for some $j$ with $k < j\leq \lfloor d/2\rfloor$.
	\end{theorem}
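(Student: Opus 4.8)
We prove the two directions separately.

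\medskip
\noindent\emph{The ``if'' direction} is a computation with $h$-polynomials. Since the Stanley--Reisner ring of a join is the tensor product of the factors, $h(\Delta_1*\Delta_2;t)=h(\Delta_1;t)\,h(\Delta_2;t)$, and $h(\partial\sigma^n;t)=1+t+\dots+t^n$. If $\Delta=\partial\sigma^{d-k}*\Gamma$ with $\Gamma$ a $(k-1)$-sphere and $d\ge 2k$, comparing coefficients gives $h_j(\Delta)=\sum_{i=0}^{j}h_i(\Gamma)$ for $0\le j\le k$, so $g_j(\Delta)=h_j(\Gamma)$ for $1\le j\le k$; in particular $g_k(\Delta)=h_k(\Gamma)=h_0(\Gamma)=1$ by the Dehn--Sommerville relations for $\Gamma$. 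If $\Delta=\partial\sigma^j*\partial\sigma^{d-j}$ with $j\le\lfloor d/2\rfloor$, the same bookkeeping gives $h_m(\Delta)=m+1$ for $0\le m\le j$, hence $g_m(\Delta)=1$ for $1\le m\le j$, so $g_k(\Delta)=1$ whenever $k<j$. One also checks directly that the missing faces of these complexes have the stated dimensions, so they belong to $S(d-k,d-1)$ (and when $d=2k$ the complex $\partial\sigma^{k}*\Gamma$ has a missing $k$-face, as required).

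\medskip
\noindent For the \emph{``only if'' direction}, let $\Delta\in S(d-k,d-1)$ satisfy $g_k(\Delta)=1$, and first suppose $\Delta$ has \emph{no missing $(d-k)$-face}, i.e.\ $\Delta\in S(d-k-1,d-1)$; by hypothesis this forces $d>2k$. Then every vertex link lies in $S(d-k-1,d-2)=S((d-1)-k,d-2)$, so $g_k(\lk(v))\ge 1$ by Remark~\ref{rm: g_k>0}. Moreover $g_{k+1}(\Delta)\le 1$ (by the Macaulay consequence of the $g$-theorem when $k+1\le\lfloor d/2\rfloor$, and because $g_{k+1}=g_{\lceil d/2\rceil}=0$ when $d=2k+1$). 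McMullen's integral formula (Lemma~\ref{lm: Swartz}), which applies since $k\le\lfloor (d-1)/2\rfloor$, then yields $f_0(\Delta)\le\sum_{v}g_k(\lk(v))=(k+1)g_{k+1}(\Delta)+(d+1-k)\le d+2$; since $f_0=d+1$ would force $\Delta=\partial\sigma^d$ with $g_k=0$, we get $f_0(\Delta)=d+2$. Now $\F[\Delta]$ is a codimension-$2$ Gorenstein$^*$ standard graded algebra, so $I_\Delta$ is a height-$2$ Gorenstein ideal, hence a complete intersection; being a squarefree monomial ideal, it is generated by two squarefree monomials with disjoint supports that together involve all $d+2$ variables, so $\Delta=\partial\sigma^a*\partial\sigma^b$ with $a+b=d$ and $a,b\ge 1$. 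Finally, having no missing $(d-k)$-face forces $\max(a,b)\le d-k-1$, i.e.\ $\min(a,b)\ge k+1$; with $j=\min(a,b)$ this is exactly alternative~(2).

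\medskip
\noindent Now suppose $\Delta$ has a missing $(d-k)$-face $M$, so $|M|=d-k+1$; by Proposition~\ref{prop: enhanced upper bound g-theorem} (when $d=2k+1$) and Corollary~\ref{cor:Nagel} (when $d\ge 2k+2$), $M$ is the \emph{unique} missing $(d-k)$-face. Assume $d>2k$ and argue by induction on $d$. Here $g_{k+1}(\Delta)=0$ (Corollary~\ref{cor:Nagel} for $d\ge 2k+2$; Dehn--Sommerville for $d=2k+1$), so McMullen's formula with parameter $k$ gives $\sum_{v}g_k(\lk(v))=d-k+1$. For each $m\in M$ the link $\lk(m)$ lies in $S((d-1)-k,d-2)$ (using the uniqueness of $M$, no missing face of $\lk(m)$ can have dimension $d-k$) and contains the missing $(d-1-k)$-face $M\setminus m$, so $g_k(\lk(m))\ge 1$ by Remark~\ref{rm: g_k>0}; since $|M|=d-k+1$, the preceding identity forces $g_k(\lk(m))=1$ for $m\in M$ and $g_k(\lk(v))=0$ for $v\notin M$. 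Applying the inductive hypothesis to $\lk(m)$ --- and ruling out alternative~(2) for it, because a join $\partial\sigma^i*\partial\sigma^{(d-1)-i}$ with $i>k$ has no missing $(d-1-k)$-face whereas $\lk(m)$ does --- we conclude $\lk(m)=\partial(\overline{M\setminus m})*\Gamma_m$ for a $(k-1)$-sphere $\Gamma_m$. Comparing the iterated links $\lk_\Delta(\{m,m'\})=\partial(\overline{M\setminus\{m,m'\}})*\Gamma_m=\partial(\overline{M\setminus\{m,m'\}})*\Gamma_{m'}$ shows the $\Gamma_m$ all coincide, say with a $(k-1)$-sphere $\Gamma$; collapsing $M$ one vertex at a time then gives $\lk_\Delta(\sigma)=\Gamma$ for every facet $\sigma$ of $\partial M$. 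Consequently $\partial M*\Gamma\subseteq\Delta$, and since a homology $(d-1)$-sphere admits no proper $(d-1)$-dimensional homology-sphere subcomplex (by strong connectivity and the pseudomanifold property), $\Delta=\partial M*\Gamma=\partial\sigma^{d-k}*\Gamma$, which is alternative~(1).

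\medskip
\noindent The case $d=2k$ is the base of the induction and, I expect, the main obstacle: McMullen's formula is unavailable at level $k$, $\Delta$ need not be $k$-stacked, and $m_k(\Delta)$ cannot be bounded by the $g$-numbers directly, so no link-based reduction is available. Here the stress-space techniques of Section~3 are essential. For a generic embedding $p$ one has $\dim\Stress^a_k(\Delta,p)=g_k(\Delta)=1$; applying the cone lemma (Lemma~\ref{cone lemma-linear}) to the stars of the vertices of a missing $k$-face $M$ and exploiting the resulting description of the support of the unique affine $k$-stress, one extracts a $\partial\sigma^k$-factor supported on $M$. From this point the argument runs as at the end of the previous case: one identifies $\Gamma=\lk_\Delta(\sigma)$ for a facet $\sigma$ of $\partial M$, proves $\partial M*\Gamma\subseteq\Delta$, and concludes $\Delta=\partial\sigma^k*\Gamma$.
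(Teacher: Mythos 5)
Your ``if'' direction is the same $h$-polynomial computation as the paper's, and your handling of the case where $\Delta$ has \emph{no} missing $(d-k)$-face is correct: McMullen's formula with $g_{k+1}\le 1$ gives $f_0\le d+2$, hence $f_0=d+2$, and the Gorenstein-codimension-2 argument (equivalently, Gr\"unbaum's classification of $(d-1)$-spheres with $d+2$ vertices, which the paper cites) gives $\Delta=\partial\sigma^a*\partial\sigma^b$ with $\min(a,b)\ge k+1$. This is a mild reorganization of what the paper does in Proposition~\ref{prop:d>2k+1}: the paper instead splits on the first $j$ with $g_{j+1}=0$ and, when such $j$ exists, reads off the join structure from the $j$-stacked triangulation $\Delta(d-j-1)$ of Theorem~\ref{thm: GLBT}, while in the residual case $g_{\lfloor d/2\rfloor}=1$ it derives $f_0\le d+2$ by a contradiction from a vertex $w$ with $g_k(\lk w)=0$. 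Both routes are valid.

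Your treatment of the case where $\Delta$ \emph{has} a missing $(d-k)$-face $M$ and $d>2k$, by induction on $d$, is a genuinely different argument from the paper's and is, as far as I can tell, correct. The uniqueness of $M$, the fact that $\lk(m)\in S(d-1-k,d-2)$ for $m\in M$ with missing $(d-1-k)$-face $M\setminus m$, the counting $\sum_v g_k(\lk v)=d+1-k=|M|$ pinning down $g_k(\lk m)=1$ precisely for $m\in M$, the exclusion of alternative~(2) for $\lk(m)$ (a join $\partial\sigma^i*\partial\sigma^{d-1-i}$ with $k<i\le\lfloor(d-1)/2\rfloor$ has no missing $(d-1-k)$-face when $d>2k$), and the iterated-link comparison forcing $\Gamma_m=\Gamma_{m'}$---all of this checks out, and your inductive step unifies what the paper proves separately for $d\ge 2k+2$ (Proposition~\ref{prop:d>2k+1}, via stackedness) and for $d=2k+1$ (Proposition~4.3, via a direct analysis of links of vertices \emph{outside} the missing face). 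One point worth making explicit: when $d-1=2k$ you must check the extra hypothesis $\lk(m)\notin S(k-1,2k-1)$ before invoking the inductive statement; it holds because $M\setminus m$ is a missing $k$-face of $\lk(m)$.

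The genuine gap is the base case $d=2k$. You acknowledge it, but a sketch of the form ``one extracts a $\partial\sigma^k$-factor supported on $M$'' is not a proof, and this case is in fact the technical heart of the paper: Section~4.2 (Lemmas~\ref{lm: support of the unique stress} and \ref{lm: partial support}, and Proposition~\ref{prop:d=2u,missing-u-face}). There one must show that the $(k-1)$-spheres $S_i=\lk(\tau\setminus v_i')$, $i=1,\dots,k+1$, all coincide. The paper does this by applying Lemma~\ref{cone lemma-affine} to $\lk(v_i')\cup\{\tau\setminus v_i'\}$ to show that the unique affine $k$-stress $\omega$ has support $\skel_{k-1}(v_i'*C_i)$ for every $i$ simultaneously, then deducing (Lemma~\ref{lm: partial support}) that any $(k-1)$-face $G\subseteq V\setminus\tau$ participating in $\omega$ must satisfy $\partial\overline{\tau}*\overline{G}\subseteq\Delta$, and finally ruling out the possibility that the $S_i$ differ by observing that $\partial_{x_H}\omega$ would yield an affine dependence among at most $2k+1$ generic points in $\R^{2k}$, which is impossible. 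None of that structure appears in your proposal, so the base of your induction is unsupported. Since your argument for every $d>2k$ funnels down to this base case, the whole ``only if'' direction hinges on filling it in.
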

	
	One direction of the theorem is immediate. Indeed, it is known (and easy to verify) that (1) $h_i(\partial\sigma^j)=1$ for all $0\leq i\leq j$;   (2) if $\Gamma$ is a $(k-1)$-sphere, then $h_k(\Gamma)=1$; and (3) $h_i(\Lambda*\Gamma)=\sum_{j=0}^ih_j(\Lambda)h_{i-j}(\Gamma)$ for any simplicial complexes $\Lambda$ and $\Gamma$ and any $i$. These results imply that if $\Delta$ is one of the complexes described in the theorem, then $g_k(\Delta)=1$. 
	
	In the remainder of this section, we focus on the other direction. Throughout, let $\Delta$ be a sphere satisfying the conditions of Theorem~\ref{main-thm:g=1}. To prove the theorem, we consider the following three cases separately: $d\geq 2k+2$, $d=2k+1$, and $d=2k$.

	\subsection{$d\geq 2k+1$}
	We begin with the case $d\geq 2k+2.$ To complete the proof of Theorem~\ref{main-thm:g=1} in this case, it suffices to verify the following result. 
	\begin{proposition}  \label{prop:d>2k+1}
		Let $k\leq d/2-1$, and let $\Delta \in S(d-k, d-1)$ satisfy $g_k(\Delta)=1$. Suppose there exists an integer $j$ with $k\leq j\leq (d-2)/2$ such that $g_{j+1}(\Delta)=0$. Then either: $j=k$, in which case $\Delta=\partial \sigma^{d-k}*\Gamma$ for some $(k-1)$-sphere $\Gamma$, or $j>k$, in which case, $\Delta=\partial \sigma^{d-j}*\partial \sigma^{j}$. On the other hand, if no such $j$ exists (that is, if  $g_{\lfloor d/2\rfloor}(\Delta)=1$), then $\Delta=\partial \sigma^{\lfloor d/2\rfloor} * \partial  \sigma^{\lceil d/2\rceil}$.
	\end{proposition}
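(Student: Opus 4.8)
The plan is to induct on $d$ (or on $d-2k$), using McMullen's integral formula (Lemma~\ref{lm: Swartz}) to pass to vertex links, together with the structural consequences of $g$-numbers vanishing (Theorem~\ref{thm: GLBT}, Remark~\ref{rm: g_k>0}) and the nonlinear Macaulay inequalities (stated after Theorem~\ref{GLBT-ineq}). First I would record the base of the analysis: since $g_k(\Delta)=1$ and $k<\lfloor d/2\rfloor$, Corollary~\ref{cor:Nagel} gives $m_{d-k}(\Delta)\le 1$, and since $\Delta\in S(d-k,d-1)$ must actually have a missing face of dimension $d-k$ (otherwise $\Delta\in S(d-k-1,d-1)$ and Proposition~\ref{prop: enhanced upper bound g-theorem} or Remark~\ref{rm: g_k>0} applied at the next level would force $g_k=0$ or a contradiction with the hypothesis $d\ge 2k+2$), we get $m_{d-k}(\Delta)=1$. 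Let $W=\{v_0,\dots,v_{d-k+1}\}$ be the unique missing $(d-k)$-face; then $\partial\overline{W}=\partial\sigma^{d-k}$ sits inside $\skel_{d-k-1}(\Delta)$.

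The heart of the argument is to show $\Delta=\partial\sigma^{d-k}*\Gamma$ for some $(k-1)$-sphere $\Gamma$ when $j=k$, i.e.\ when $g_{k+1}(\Delta)=0$. By Theorem~\ref{thm: GLBT}, $g_{k+1}(\Delta)=0$ means $\Delta$ is $k$-stacked, and $\Delta(d-k-1)=\Delta(k)$ is an $\R$-homology $d$-ball $B$ triangulating $\Delta$ with no interior faces of dimension $\le d-k-1$. I would argue that $B$ has a unique interior face, namely $W$ itself: the interior faces of $B$ all have dimension $\ge d-k$, and a short count (using $g_k(\Delta)=1$, hence $\dim_\F(\F[\Delta]/(\Theta,c))_k=1$, together with Proposition~\ref{thm2: Murai-Novik-Zheng} giving $g_k=m_{d-k}=1$) pins down that there is exactly one minimal interior face, of dimension $d-k$, which must be $W$. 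Then $B=\partial\sigma^{d-k}*\partial B'$ for the ball $B'$ obtained as the link of $W$ in $B$ (the "antistar/join" decomposition around the unique interior minimal face), and $\partial B=\partial(\partial\sigma^{d-k}*B')=\partial\sigma^{d-k}*\partial B'$ because $\partial\sigma^{d-k}$ is already a sphere; setting $\Gamma=\partial B'$, which is a $(k-1)$-sphere, finishes this case.

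For $j>k$, I would run an induction via Lemma~\ref{lm: Swartz}: from $g_{j+1}(\Delta)=0$ and $g_k(\Delta)=1$, the Macaulay inequalities force $g_{k+1}=\cdots=g_j=1$ and $g_{j+1}=\cdots=g_{\lfloor d/2\rfloor}=0$. Applying Lemma~\ref{lm: Swartz} at level $k-1$ (say) and using that each $\lk(v)$ is a $(d-2)$-sphere lying in $S(d-1-k,d-2)$ with $g_k(\lk(v))\ge 0$, the formula $\sum_v g_{k-1}(\lk(v))=k\,g_k(\Delta)+(d+1-(k-1))g_{k-1}(\Delta)$ together with lower bounds on $g_{k-1}(\lk(v))$ (each is $\ge 1$, since $\lk(v)\in S(d-1-k,d-2)=S((d-1)-k,(d-1)-1)$ forces $g_{k-1}\ge1$ by Remark~\ref{rm: g_k>0}, \emph{unless} the link drops into a smaller $S$-class) shows that $g_{k-1}(\lk(v))=1$ for \emph{every} vertex, $g_k(\lk(v))=0$ for every vertex, and no vertex link drops class. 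Hence every $\lk(v)$ satisfies the inductive hypothesis at dimension $d-1$ with the same $k$, so $\lk(v)=\partial\sigma^{(d-1)-j'}*\partial\sigma^{j'}$ for appropriate $j'$, or $\partial\sigma^{(d-1)-k}*(\text{a }(k-1)\text{-sphere})$ — and I would then check that these local product structures, holding at every vertex, force $\Delta$ itself to be a join $\partial\sigma^{d-j}*\partial\sigma^{j}$ (the missing $(d-k)$-face $W$ forces one join factor to be $\partial\sigma^{d-j}$ with $d-j\le d-k-1$, and the vanishing $g_{j+1}$ forces the other factor to be the minimal sphere $\partial\sigma^j$). The final sentence of the proposition, the case $g_{\lfloor d/2\rfloor}(\Delta)=1$, is handled by the same induction together with the Dehn--Sommerville observation that $g_{\lceil d/2\rceil}=0$ when $d$ is odd (so $j=\lfloor d/2\rfloor$ plays the role of "the last index", and the argument for $j>k$ applies verbatim with $j=\lfloor d/2\rfloor$).

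I expect the main obstacle to be the gluing step: deducing the \emph{global} join decomposition of $\Delta$ from the fact that \emph{every} vertex link is a join of boundaries of simplices. One must show that the "small" join factor is the \emph{same} $\partial\sigma^{d-j}$ (supported on a fixed vertex set) across all links — this is where the uniqueness of the missing $(d-k)$-face $W$ and a connectedness/propagation argument on the graph of $\Delta$ enter — and then invoke a join-recognition lemma (a complex all of whose vertex links split off a common $\partial\sigma^m$ factor must itself be $\partial\sigma^m*(\text{something})$). A secondary subtlety is ruling out that some vertex link lies in a strictly smaller $S$-class (which would break the induction); this is controlled by the exactness in Lemma~\ref{lm: Swartz} forcing every summand $g_{k-1}(\lk(v))$ to attain its minimum value $1$ simultaneously, leaving no room for a link to be "better."
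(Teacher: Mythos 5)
Your treatment of the case $j=k$ is essentially the paper's: use Theorem~\ref{thm: GLBT} to get the $k$-stacked triangulation $T=\Delta(d-k-1)$, identify the unique minimal interior face, and read off the join structure from $T=\st(\tau,T)=\overline{\tau}*\lk(\tau,T)$, so $\Delta=\partial T=\partial\overline{\tau}*\lk(\tau,T)$. That part is fine (modulo a notational slip: you want $\Gamma=\lk(W,B)$, which is already a $(k-1)$-sphere, not ``$\partial B'$'').

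However, your opening claim that $m_{d-k}(\Delta)=1$ in general is \emph{false}, and the parenthetical justification does not yield a contradiction. Corollary~\ref{cor:Nagel} says precisely that $m_{d-k}=1$ if and only if $g_{k+1}=0$, i.e.\ if and only if $j=k$. When $j>k$, one has $g_{k+1}=1\ne 0$, hence $m_{d-k}=0$; the unique minimal missing face then has dimension $d-j$, not $d-k$. (Concretely, $\partial\sigma^{d-j}*\partial\sigma^{j}$ with $j>k$ has no missing $(d-k)$-face.) Your appeal to Remark~\ref{rm: g_k>0} ``at the next level'' only gives $g_{k+1}\geq 1$ when $\Delta\in S(d-k-1,d-1)$, which is consistent, not contradictory.

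The $j>k$ case does not require the inductive machinery you sketch. The paper runs exactly the same argument as for $j=k$: $\Delta$ is $j$-stacked with triangulation $T=\Delta(d-j-1)$, and $m_{d-\ell}=0$ for $k\le\ell\le j-1$ (Corollary~\ref{cor:Nagel}) plus $m_{d-\ell}=0$ for $\ell<k$ (the $S(d-k,d-1)$ hypothesis) together force $T$ to have a unique minimal interior face $\tau$ of dimension $d-j$; hence $\Delta=\partial\overline{\tau}*\Gamma$ with $\Gamma=\lk(\tau,T)$ a $(j-1)$-sphere. Then $g_k(\Delta)=h_k(\Gamma)=1$, and the only $(j-1)$-sphere with $h_k=1$ when $j>k$ is $\partial\sigma^{j}$. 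Your induction-on-$d$ route, by contrast, leaves the crucial ``gluing'' step (that all vertex links share a common $\partial\sigma^m$ factor on a fixed vertex set) as an acknowledged obstacle, and your McMullen computation at level $k-1$ does not close because $g_{k-1}(\Delta)$ is not pinned down.

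Finally, your handling of the case $g_{\lfloor d/2\rfloor}(\Delta)=1$ does not work: you cannot take ``$j=\lfloor d/2\rfloor$ and apply the $j>k$ argument verbatim,'' because the stacked-triangulation machinery requires $j+1\le d/2$, which fails for $j=\lfloor d/2\rfloor$ (for $d$ even, $j+1>d/2$; for $d$ odd, $\lfloor d/2\rfloor=(d-1)/2>(d-2)/2$). The paper instead uses Lemma~\ref{lm: Swartz} at level $k$: since $g_k(\Delta)=g_{k+1}(\Delta)=1$, one gets $\sum_{v}g_k(\lk(v))=d+2$; if $f_0=d+2$ then $\Delta$ is a join of two simplex boundaries, and if $f_0\ge d+3$ then some vertex $w$ has $g_k(\lk(w))=0$, which by Remark~\ref{rm: g_k>0} forces a missing face of $\lk(w)$ of dimension $\ge d-k$, contradicting $m_\ell(\Delta)=0$ for $\ell>\lceil d/2\rceil$ (Corollary~\ref{cor:Nagel} applied with $g_{\lfloor d/2\rfloor}=1$). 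You should replace your last case with this vertex-count argument.
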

	\begin{proof}
		Assume first that there exists $k\leq j\leq (d-2)/2$ such that $g_j=1$ but $g_{j+1}=0$.
		Since $g_{j+1}(\Delta)=0$ and $j+1\leq d/2$, $\Delta$ is $j$-stacked. Furthermore, by Corollary~\ref{cor:Nagel}, $m_{d-j}(\Delta)=1$ and $m_{d-\ell}(\Delta)=0$ for all $k\leq \ell \leq j-1$. Our assumptions also imply that $m_{d-\ell}=0$ for all $\ell<k$. Thus, $\Delta$ has a unique missing $(d-j)$-face, which we denote by $\tau$, and no missing faces of dimension greater than $d-j$.  Let $T=\Delta(d-j-1)$ be the $j$-stacked triangulation of $\Delta$ from Theorem~\ref{thm: GLBT}. Then $\tau$ is the unique minimal interior face of this triangulation. Since any ball is the union of the stars of its minimal interior faces, it follows that $T=\st(\tau,T)$, and therefore $$\Delta=\partial T=\partial (\st(\tau, T))=\partial (\overline{\tau}* \lk(\tau, T))=\partial \overline{\tau}* \lk(\tau, T).$$ 
		Let $\Gamma:=\lk(\tau, T)$; it is a $(j-1)$-sphere, and one easily checks that $g_k(\Delta)=h_k(\Gamma)$. Thus, $h_k(\Gamma)=1$. Observe  that if $j=k$, then any $(k-1)$-sphere $\Gamma$ satisfies $h_k(\Gamma)=1$. However, if $j>k$, then the only $(j-1)$-sphere $\Gamma$ with $h_k(\Gamma)=1$ is the boundary of a $j$-simplex. This completes the proof of the case $g_{j+1}(\Delta)=0$.
		
		We now consider the case $g_{\lfloor d/2\rfloor}(\Delta)=1$. In this case, it suffices to show that $\Delta$ is the join of the boundary complexes of two simplices whose dimensions sum to $d$. The fact that dimensions of those simplices are $\lfloor d/2\rfloor$ and $\lceil d/2\rceil$  then follows from our assumption that $g_{\lfloor d/2\rfloor}(\Delta) \neq 0$. By McMullen’s formula, $\sum_{v\in V(\Delta)}g_k(\lk (v))=(k+1)g_{k+1}(\Delta)+(d-k+1)g_k(\Delta)=d+2$. Hence there are exactly $d+2$ vertices whose links have $g_k=1$. 
		
		If the total number of vertices is $d+2$, then $\Delta$ is the join of the boundary complexes of two simplices whose dimensions sum to $d$ (see \cite[Section 6.1]{Gru-book}), and the result follows. Hence assume that $\Delta$ has at least $d+3$ vertices, and let $w$ be a vertex with $g_k(\lk(w))=0$. Since $\lk(w)$ is a $(d-2)$-sphere and $d-1>2k$, it follows from  Remark~\ref{rm: g_k>0} that $\lk(w)$ has a missing face $\tau$ of dimension $\geq (d-1)-k+1=d-k$. Then either $\tau$ or $\tau\cup w$ is a missing face of $\Delta$. However, our assumptions that $\Delta\in S(d-k, d-1)$ and that $g_{\lfloor d/2\rfloor}=1$, together with Corollary~\ref{cor:Nagel}, guarantee that $m_\ell=0$ for all $\ell>\lceil d/2\rceil$. Since $d-k>\lceil d/2\rceil$, it follows that neither $\tau$ nor $\tau\cup w$ can be a missing face, yielding the desired contradiction.
	\end{proof}
	
We now consider the second case, $d=2k+1$. The argument proceeds in an analogous manner, again relying on McMullen’s formula and the GLBT.
	
	\begin{proposition}
		If $\Delta\in S(k+1,2k)$ satisfies $g_k(\Delta)=1$, then $\Delta$ is the join of $\partial \sigma^{k+1}$ and a $(k-1)$-sphere. 
	\end{proposition}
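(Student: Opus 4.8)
The plan follows the pattern of Proposition~\ref{prop:d>2k+1}: use Nagel's inequality to force exactly one missing $(k+1)$-face, use McMullen's formula to control the vertex links, and then glue the local structure into a global join. Since $d=2k+1$ is odd, the Dehn--Sommerville relations give $g_{k+1}(\Delta)=0$, so Lemma~\ref{lm: Swartz} at index $k$ (which equals $\lfloor(d-1)/2\rfloor$) yields $\sum_{v\in V(\Delta)}g_k(\lk(v))=(k+1)g_{k+1}(\Delta)+(d+1-k)g_k(\Delta)=k+2$. By Proposition~\ref{prop: enhanced upper bound g-theorem} (applicable since $\lceil d/2\rceil-1=k$), $m_{k+1}(\Delta)\le g_k(\Delta)=1$. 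To rule out $m_{k+1}(\Delta)=0$: otherwise $\Delta\in S(k,2k)$, every link $\lk(v)$ lies in $S(k,2k-1)$, and since $\lk(v)$ is a $(2k-1)$-sphere the last sentence of Remark~\ref{rm: g_k>0} gives $g_k(\lk(v))\ge1$ for all $v$; then $k+2=\sum_v g_k(\lk(v))\ge f_0(\Delta)\ge 2k+2$, impossible for $k\ge2$. Hence $\Delta$ has a unique missing $(k+1)$-face $\tau$, $|\tau|=k+2$, and all its other missing faces have dimension $\le k$.

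Next I would analyze the vertex links. Note first that a missing $(k+1)$-face $\rho$ of any $\lk(v)$ must equal $\tau$ (so $v\notin\tau$): if $\rho\notin\Delta$ then $\rho$ is itself a missing $(k+1)$-face of $\Delta$, and if $\rho\in\Delta$ then $\rho\cup\{v\}$ is a missing $(k+2)$-face of $\Delta$, contradicting $\Delta\in S(k+1,2k)$. It follows that $g_k(\lk(v))\ge1$ for every $v\in\tau$: were it $0$, then $\lk(v)$ would be $(k-1)$-stacked by Theorem~\ref{thm: GLBT}, hence by Remark~\ref{rm: g_k>0} have a missing face of dimension $\ge k+1$, which --- since $\lk(v)\in S(k+1,2k-1)$ --- has dimension exactly $k+1$ and therefore equals $\tau$, forcing $v\notin\tau$. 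Since $|\tau|=k+2$ and the link $g_k$'s sum to $k+2$, we obtain $g_k(\lk(v))=1$ for $v\in\tau$ and $g_k(\lk(w))=0$ for all $w\notin\tau$.

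Now fix $w\notin\tau$. By Theorem~\ref{thm: GLBT}, $B_w:=\lk(w)(k-1)$ is an $\R$-homology $2k$-ball with $\partial B_w=\lk(w)$. Since $g_k(\lk(w))=0$, Remark~\ref{rm: g_k>0} shows $\lk(w)$ has no missing $k$-faces; together with $\lk(w)\in S(k+1,2k-1)$ and the previous paragraph, $\tau$ is the unique missing face of $\lk(w)$ of dimension $\ge k$, hence --- as in the proof of Proposition~\ref{prop:d>2k+1} --- the unique minimal interior face of $B_w$. Therefore $B_w=\overline{\tau}*L_w$ with $L_w:=\lk(\tau,B_w)$ a $(k-2)$-sphere, and $\lk(w)=\partial B_w=\partial\overline{\tau}*L_w$. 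From this I would extract two facts: every facet $F$ of $\Delta$ has $|F\cap\tau|=k+1$ (choose $w\in F\setminus\tau$, which is nonempty since $|F|=2k+1>k+2=|\tau|$ as $k\ge2$, and note that $F\setminus\{w\}$ is a facet of $\partial\overline{\tau}*L_w$, so $F\cap\tau$ is a facet of $\partial\overline{\tau}$); and, for each $(k+1)$-subset $\tau'$ of $\tau$, the link $\lk(\tau',\Delta)$ is a $(k-1)$-sphere on vertex set $V(\Delta)\setminus\tau$ (every $w\notin\tau$ has $\tau'\cup\{w\}\in\Delta$ because $\partial\overline{\tau}\subseteq\lk(w)$, while $\tau'\cup\{x\}=\tau\notin\Delta$ for the remaining vertex $x$ of $\tau$) all of whose vertex links are $\lk(w,\lk(\tau',\Delta))=\lk(\tau',\partial\overline{\tau}*L_w)=L_w$, independent of $\tau'$; since two simplicial complexes with the same vertex set and the same vertex links coincide, $\Gamma:=\lk(\tau',\Delta)$ does not depend on $\tau'$. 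Combining these, each facet $F$ of $\Delta$ equals $(F\cap\tau)\sqcup(F\setminus\tau)$ with $F\cap\tau$ a facet of $\partial\overline{\tau}$ and $F\setminus\tau$ a facet of $\Gamma=\lk(F\cap\tau,\Delta)$, and conversely $\tau'\cup G\in\Delta$ for every facet $\tau'$ of $\partial\overline{\tau}$ and facet $G$ of $\Gamma$; thus $\Delta$ and $\partial\overline{\tau}*\Gamma$ have the same facets, whence $\Delta=\partial\overline{\tau}*\Gamma=\partial\sigma^{k+1}*\Gamma$ with $\Gamma$ a $(k-1)$-sphere.

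The hard part will be the last step --- turning the pointwise descriptions $\lk(w)=\partial\overline{\tau}*L_w$ ($w\notin\tau$) into a single global join. Everything there hinges on identifying $\tau$ as the \emph{unique} minimal interior face of each $B_w$, which is what makes both the facet-intersection identity $|F\cap\tau|=k+1$ and the $\tau'$-independence of $\lk(\tau',\Delta)$ go through; this identification is exactly where the hypotheses $g_k(\Delta)=1$ and $\Delta\in S(k+1,2k)$ are both used essentially.
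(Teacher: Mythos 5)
Your proof is correct and follows the same skeleton as the paper's: McMullen's formula at index $k$ (using $g_{k+1}=0$ from Dehn--Sommerville), Remark~\ref{rm: g_k>0} applied to vertex links, the $(k-1)$-stacked triangulation from Theorem~\ref{thm: GLBT} to get $\lk(w)=\partial\overline{\tau}*L_w$ for $w\notin\tau$, and a gluing step. The only genuine difference is at the start: you invoke Nagel's inequality (Proposition~\ref{prop: enhanced upper bound g-theorem}) to get $m_{k+1}\le 1$ directly and then rule out $m_{k+1}=0$ by a vertex count, whereas the paper produces the missing $(k+1)$-face $\tau$ by locating a vertex $w$ with $g_k(\lk(w))=0$ and applying Remark~\ref{rm: g_k>0} to $\lk(w)$, and gets uniqueness from the McMullen count of links with $g_k=1$; your route has the small advantage of not needing the a~priori upper bound $g_k(\lk(v))\le 1$ at that stage. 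Your final gluing is also spelled out in more detail than the paper's one-line union of stars, but it proves the same thing by the same mechanism.
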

	\begin{proof}
		Let $\Delta$ be a sphere in $S(k+1,2k)$ with $g_k(\Delta)=1$. By McMullen’s formula, $$\sum_{v\in V(\Delta)} g_k(\lk(v))=(k+1)g_{k+1}(\Delta)+(k+2)g_k(\Delta)=k+2.$$ Hence there are exactly $k+2$ vertices whose links satisfy $g_k=1$; for every other vertex $w$, $\lk(w)$ is a $(2k-1)$-sphere with $g_k=0$. It follows from  Remark~\ref{rm: g_k>0} that for such $w$, $\lk(w)$ has a missing face $\tau$ of dimension $\geq (d-1)-k+1=k+1$. Then either $\tau$ or $\tau\cup w$ would be a missing face of $\Delta$. However, since $\Delta\in S(k+1,2k)$, the latter case is impossible. Therefore, $\tau$ is a missing $(k+1)$-face of $\Delta$. Now, for every vertex $u\in\tau$, the set $\tau\backslash u$ is a missing $k$-face of $\lk(u)$. By Remark~\ref{rm: g_k>0}, this implies $g_k(\lk(u))=1$. Since there are $k+2$ vertices in $\tau$, it follows that the vertices of $\Delta$ whose links satisfy $g_k=1$ are precisely the vertices of $\tau$. Moreover, $\Delta$ has no other missing $(k+1)$-faces; otherwise there would be more than $k+2$ vertices whose links satisfy $g_k=1$. 
		
	We conclude from the above discussion that for every vertex $w$ not in $\tau$, the link of $w$ is a $(k-1)$-stacked $(2k-1)$-sphere and $\tau$ is its unique missing $(k+1)$-face. As in the proof of Proposition~\ref{prop:d>2k+1}, this implies that $\lk(w)=\partial\overline{\tau} * S_w$ for some $(k-2)$-sphere $S_w$. In other words, $\st(w)= \partial\overline{\tau}*w*\lk(w)$. Now, every facet of $\Delta$ contains a vertex that is not in $\tau$. Thus, 
		$$\Delta=\cup_{w\notin \tau} \st(w)=\partial\overline{\tau} *  [\cup_{w\notin \tau} w*S_w].$$
		To summarize, $\Delta$ is the join of two complexes one of which is $\partial\overline{\tau} = \partial \sigma^{k+1}$. Since $\Delta$ is a $2k$-sphere, the other complex must be a $(k-1)$-sphere. The result follows.
	\end{proof}
	
	\subsection{$d=2k$}
	As our final case, we consider a sphere $\Delta$ with $g_k(\Delta)=1$ that lies in $S(k, 2k-1)$ but not in $S(k-1,2k-1)$. Let the vertex set be $V=V(\Delta)$, and let $\tau=\{v'_1,\dots,v'_{k+1}\}$ be a missing $k$-face of $\Delta$. Our goal is to show that $\Delta$ is the join of $\partial \overline{\tau}$ with a $(k-1)$-sphere; see Proposition~\ref{prop:d=2u,missing-u-face} below. The proof relies on several lemmas. To start, choose a generic embedding $q : V(\Delta)\to \R^{2k}$ that satisfies the conditions of Lemma~\ref{lm1}, and let $\omega$ be the unique (up to scalar multiplication) non-zero affine $k$-stress on $(\Delta,q;\F)$.

	Note that for each $1\leq i\leq k+1$, $\tau\backslash v'_i$ is a missing $(k-1)$-face of the $(2k-2)$-sphere $\lk(v'_i)$. Consider the complex $\Lambda=\lk(v'_i) \cup \{\tau\backslash v'_i\}$, i.e., $\lk(v'_i)$ together with the new face $\tau\backslash v'_i$. We endow $\Lambda$ (equivalently, $\lk(v'_i)$) with the embedding $p'$ defined as in the proof of Lemma~\ref{lm1}, with $v'_i$ playing the role of $u$. Following the notation of Section 3, we observe that since $\dim (\F'[\Lambda]/(\Theta'))_k=h_k(\lk(v'_i))+1=h_{k-1}(\lk(v'_i))+1$, while $\dim (\F'[\Lambda]/(\Theta'))_{k-1}=h_{k-1}(\lk (v'_i))$,  the stress space $\Stress^a_k(\Lambda,p';\F')$ is $1$-dimensional. Lemma~\ref{cone lemma-affine} then implies that  $\Stress^a_k(\Lambda*v_i',p; \F)$ is also $1$-dimensional. Since $\st(v_i')\cup \{\tau\backslash v_i'\}\subseteq \Delta$ and $\Lambda*v_i'$ have the same $(k-1)$-skeleton, and since  $\omega$ is the unique (up to scalar multiplication) non-zero affine $k$-stress on $(\Delta,q;\F)$, we obtain the following result.
	
	\begin{lemma}\label{lm: support of the unique stress}
		For every $1\leq i\leq k+1$, the stress $\omega$ lives on $\st(v_i')\cup \{\tau\backslash v_i'\}$. Furthermore, for each $v'_i\in\tau$, the support of $\omega$ is of the form $\skel_{k-1}(v'_i*C_i)$ for some subcomplex $C_i$ of $\lk(v'_i)\cup \{\tau\backslash v'_i\}$.
	\end{lemma}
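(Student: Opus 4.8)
The plan is to bootstrap from the three facts recorded in the discussion preceding the lemma: that $\Stress^a_k(\Lambda,p';\F')$ is one-dimensional for $\Lambda=\lk(v'_i)\cup\{\tau\setminus v'_i\}$, that the cone lemma (Lemma~\ref{cone lemma-affine}) lifts this to a one-dimensional space $\Stress^a_k(\Lambda*v'_i,p;\F)$, and that $\omega$ spans the one-dimensional space $\Stress^a_k(\Delta,q;\F)$ (this is Theorem~\ref{thm: Lefschetz} together with $g_k(\Delta)=1$). The glue between these is the elementary remark that the defining conditions for an affine $k$-stress only ever refer to faces of dimension $\leq k-1$, so that $\Stress^a_k(-)$ depends on a complex only through its $(k-1)$-skeleton and the chosen embedding.

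First I would do the bookkeeping. Since $\tau$ is a missing face of $\Delta$, the set $\tau\setminus v'_i$ is a face of $\Delta$, so $\st(v'_i)\cup\{\tau\setminus v'_i\}$ is a genuine subcomplex of $\Delta$. This subcomplex has exactly the same $(k-1)$-skeleton as $\Lambda*v'_i$: the two complexes differ only in the single $k$-face $\tau=(\tau\setminus v'_i)\cup v'_i$, which belongs to $\Lambda*v'_i$ but not to $\Delta$. By the ``$(k-1)$-skeleton only'' remark, $\Stress^a_k(\Lambda*v'_i,p;\F)=\Stress^a_k(\st(v'_i)\cup\{\tau\setminus v'_i\},p;\F)$, so the latter is one-dimensional. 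I would also recall that the embedding $p$ agrees with the generic embedding $q$ up to the invertible row operations from the proof of Lemma~\ref{lm1}, which leave every stress space unchanged; hence $\Stress^a_k(\st(v'_i)\cup\{\tau\setminus v'_i\},q;\F)$ is a one-dimensional subspace of $\Stress^a_k(\Delta,q;\F)$, which is itself one-dimensional. As a one-dimensional subspace of a one-dimensional space is the whole space, the generator $\omega$ of $\Stress^a_k(\Delta,q;\F)$ must live on $\st(v'_i)\cup\{\tau\setminus v'_i\}$.

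Next I would identify the shape of the support. Let $\omega'$ span $\Stress^a_k(\Lambda,p';\F')$ and apply Lemma~\ref{cone lemma-affine} with $v'_i$ in the role of the cone point $0$: $\psi_k(\omega')$ is a nonzero element of $\Stress^a_k(\Lambda*v'_i,p;\F)$ (nonzero because $\psi_k$ is injective, by the construction in Lemma~\ref{cone lemma-linear}), and $\supp(\psi_k(\omega'))=\skel_{k-1}\big(v'_i*\supp(\omega')\big)$. Since $\psi_k(\omega')$ and $\omega$ both lie in the one-dimensional space $\Stress^a_k(\Lambda*v'_i,p;\F)=\Stress^a_k(\st(v'_i)\cup\{\tau\setminus v'_i\},q;\F)$, they are proportional, hence have the same support. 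Setting $C_i:=\supp(\omega')$, which is a subcomplex of $\Lambda=\lk(v'_i)\cup\{\tau\setminus v'_i\}$, we obtain $\supp(\omega)=\skel_{k-1}(v'_i*C_i)$, as claimed.

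The only step requiring genuine care --- and the one I would slow down on in a full write-up --- is the passage through $\Lambda*v'_i$, which is \emph{not} a subcomplex of $\Delta$ because it contains the forbidden face $\tau$; it is precisely to make this legitimate that one needs the observation that affine $k$-stress spaces see only the $(k-1)$-skeleton. The remaining ingredients --- matching up the fields $\F'\subseteq\F$ and the embeddings $p,p',q$ produced in the proof of Lemma~\ref{lm1}, and the injectivity of $\psi_k$ --- are routine.
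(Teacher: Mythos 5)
Your proposal is correct and follows essentially the same route as the paper: Lemma~\ref{lm: support of the unique stress} is stated as a direct consequence of the paragraph that precedes it, where the authors record the one-dimensionality of $\Stress^a_k(\Lambda,p';\F')$, lift it via Lemma~\ref{cone lemma-affine}, and invoke exactly the observation that $\st(v'_i)\cup\{\tau\setminus v'_i\}$ and $\Lambda*v'_i$ share a $(k-1)$-skeleton while $\omega$ spans the one-dimensional $\Stress^a_k(\Delta,q;\F)$. You have simply spelled out the same chain (plus the injectivity of $\psi_k$ and the $p$-versus-$q$ bookkeeping) in more detail.
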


	Lemma~\ref{lm: support of the unique stress} leads to the following structural result about $\Delta$.
	\begin{lemma}\label{lm: partial support}
		If a $(k-1)$-face $G\in \Delta[V\backslash \tau]$ participates in $\omega$, then  $\partial\overline{\tau}*\overline{G}$ is a subcomplex of $\Delta$.
	\end{lemma}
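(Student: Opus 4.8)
The plan is to exploit Lemma~\ref{lm: support of the unique stress} simultaneously for all the vertices $v_i'\in\tau$. Fix a $(k-1)$-face $G\in\Delta[V\setminus\tau]$ with $\omega_G\neq 0$, i.e., $G\in\supp(\omega)$. I want to show that for every subset $S\subsetneq\tau$ the face $S\cup G$ lies in $\Delta$; since $G$ already lies in $\Delta$, and since $\partial\overline\tau$ consists exactly of the proper subsets of $\tau$ (together with $\emptyset$), this gives $\partial\overline\tau*\overline G\subseteq\Delta$. The key observation is that by Lemma~\ref{lm: support of the unique stress}, for each $i$ we have $\supp(\omega)=\skel_{k-1}(v_i'*C_i)$ for some subcomplex $C_i\subseteq\lk(v_i')\cup\{\tau\setminus v_i'\}$. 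Because $G$ does not contain $v_i'$ (as $G\subseteq V\setminus\tau$), membership $G\in\supp(\omega)=\skel_{k-1}(v_i'*C_i)$ forces $G\in C_i\subseteq\lk(v_i')$, hence $G\cup v_i'\in\Delta$ for every $i$. That handles all singletons $S=\{v_i'\}$.

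To get the larger subsets $S\subseteq\tau$, I would iterate. Suppose inductively that $S\cup G\in\Delta$ for some $S\subsetneq\tau$ with $|S|\leq k-1$, and pick $v_i'\in\tau\setminus S$; I claim $S\cup\{v_i'\}\cup G\in\Delta$ provided $|S\cup\{v_i'\}\cup G|\le k+1$, i.e.\ provided $|S|\le k-1$. The face $S\cup G$ has $v_i'\notin S\cup G$, so $S\cup G\in\Delta$ together with the structural form $\supp(\omega)=\skel_{k-1}(v_i'*C_i)$ is not by itself enough — I actually need to know that $S\cup G$, or a suitable $(k-1)$-subface of it containing $G$, participates in $\omega$. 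This is where I would use the fact that $\omega$ is an \emph{affine} $k$-stress and run the standard ``link of a face'' argument: if $\sigma$ is a $(k-1)$-face in $\supp(\omega)$ and $w\in\sigma$, then $\partial_{x_w}\omega$ is a nonzero $(k-1)$-stress living on $\st(w)$, and by genericity its support is a full-dimensional subcomplex, so every facet of $\supp(\partial_{x_w}\omega)$ containing $\sigma\setminus w$ extends $\sigma\setminus w$ inside $\lk(w)$. Combined with Lemma~\ref{lm: support of the unique stress}, which says the support near each $v_i'$ is a cone with apex $v_i'$, this lets me conclude that if a $(k-1)$-face containing $G$ and some vertices of $\tau$ participates, then I can enlarge the $\tau$-part by one more vertex of $\tau$ and still participate, as long as the total size stays $\le k$.

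More concretely, here is the cleanest route I would try to write down. Having established $G\cup v_i'\in\Delta$ for all $i$, I argue that in fact $G\cup v_i'\in\supp(\omega)$ for all $i$: indeed $G\in\supp(\omega)=\skel_{k-1}(v_i'*C_i)$ means $G\in C_i$, and since $C_i$ is the base of the cone describing a support generated by $(k-1)$-faces, $v_i'*G$ is a $k$-face of $v_i'*C_i$, hence $\skel_{k-1}(v_i'*C_i)=\supp(\omega)$ contains all $(k-1)$-subfaces of $v_i'*G$, in particular those containing $G$ — but wait, $v_i'*G$ has $k+1$ vertices, so I instead get that each $k$-subset of $G\cup v_i'$ lies in $\supp(\omega)$; dropping one vertex $u\in G$ yields the $(k-1)$-face $(G\setminus u)\cup v_i'\in\supp(\omega)$. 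Now apply the same reasoning with a \emph{different} vertex $v_j'$ to the participating face $(G\setminus u)\cup v_i'$: since $v_j'\notin(G\setminus u)\cup v_i'$, and $\supp(\omega)=\skel_{k-1}(v_j'*C_j)$, we get $(G\setminus u)\cup v_i'\in C_j\subseteq\lk(v_j')$, so $(G\setminus u)\cup\{v_i',v_j'\}\in\Delta$. Reinserting $u$ (using that $\Delta$ has no missing $k$-face other than $\tau$ — here is where the hypothesis $\Delta\notin S(k-1,2k-1)$ and the normalization that $\tau$ is \emph{the} relevant missing $k$-face enters, via Corollary~\ref{cor:Nagel} bounding $m_k$): the boundary of $G\cup\{v_i',v_j'\}$ lies in $\Delta$ and this set has $k+2$ vertices, so it is not a candidate missing $k$-face; checking its $(k-1)$-subsets are faces reduces to the cases already handled. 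Iterating over all of $\tau$ builds up $\partial\overline\tau*\overline G\subseteq\Delta$.

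The main obstacle I anticipate is the bookkeeping in the inductive step — specifically, making sure that when I enlarge the $\tau$-part of a participating face I always keep a $(k-1)$-face (not a $k$-face) in the support so that Lemma~\ref{lm: support of the unique stress} applies, and that when I ``reinsert'' vertices of $G$ I genuinely know the resulting set is a face and not merely that its proper subsets are. Controlling the latter is exactly what the hypothesis on missing faces is for: since $\Delta\in S(k,2k-1)$ and $m_k(\Delta)$ is forced to be small by $g_k=1$ (Corollary~\ref{cor:Nagel}), the only missing $k$-face is $\tau$ itself, and every set of size $\le k-1$ all of whose proper subsets are faces must itself be a face of the $(2k-1)$-sphere as long as it is not $\tau$ — which it never is, since it contains the vertex $u\in G\notin\tau$ or otherwise is too small. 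So the heart of the matter is purely combinatorial once the participation statements are in hand, and those come directly from the cone structure in Lemma~\ref{lm: support of the unique stress} plus the standard genericity fact that $\partial_{x_w}$ of a nonzero stress is a nonzero stress with full-dimensional support on $\st(w)$.
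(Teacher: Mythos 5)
Your first half — using Lemma~\ref{lm: support of the unique stress} at each apex $v_i'\in\tau$ to see that $G\in C_i\subseteq\lk(v_i')$, then deriving $(G\setminus u)\cup v_i'\in\supp(\omega)$ and swapping in further vertices $v_j'\in\tau$ — is exactly the paper's route to $\skel_{k-1}(\partial\overline{\tau}*\overline{G})\subseteq\Delta$. (There is a small arithmetic slip: you write that you may enlarge provided $|S\cup\{v_i'\}\cup G|\le k+1$, ``i.e.\ provided $|S|\le k-1$,'' but with $|G|=k$ that condition forces $|S|=0$; the paper avoids this by always staying inside $(k-1)$-faces of $\supp(\omega)$, i.e.\ by \emph{removing} a vertex of $G$ each time it \emph{adds} one of $\tau$, and only afterward lifting the $(k-1)$-skeleton to the full join.)

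The genuine gap is in the ``reinsertion'' step. You assert that $\tau$ is the unique missing $k$-face of $\Delta$, and you cite Corollary~\ref{cor:Nagel} to bound $m_k$. But Corollary~\ref{cor:Nagel} requires $k<\lfloor d/2\rfloor$ and here $d=2k$, so it says nothing about $m_k$; and the assertion itself is false. For instance $\Delta=\partial\sigma^k*\partial\sigma^k=K(k,2k-1)$ lies in $S(k,2k-1)\setminus S(k-1,2k-1)$, has $g_k=1$, and has \emph{two} missing $k$-faces. More generally the target conclusion of Proposition~\ref{prop:d=2u,missing-u-face} is $\Delta=\partial\overline{\tau}*S$ for a $(k-1)$-sphere $S$, which can certainly contribute a second missing $k$-face. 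So an argument built on ``the only missing $k$-face is $\tau$'' cannot close the induction. What the paper proves instead is the weaker but sufficient statement that any two missing $k$-faces of $\Delta$ must be \emph{disjoint}: if $v\in K\cap K'$ for missing $k$-faces $K\neq K'$, then $\Stress^a_k\big(\st(v)\cup\{K\setminus v,\,K'\setminus v\}\big)$ is at least $2$-dimensional, contradicting $g_k(\Delta)=1$. Combined with $|\tau\cup G|=2k+1<2k+2$, this shows $\tau$ is the only missing $k$-face inside $\Delta[\tau\cup G]$, which is all that is needed to pass from $\skel_{k-1}(\partial\overline{\tau}*\overline{G})\subseteq\Delta$ to $\skel_{k}(\partial\overline{\tau}*\overline{G})\subseteq\Delta$, and then (since $\Delta$ has no missing faces of dimension $>k$) to $\partial\overline{\tau}*\overline{G}\subseteq\Delta$. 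You would need to replace your uniqueness claim with this disjointness argument for the proof to go through.
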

	\begin{proof} Pick any $v'_i\in\tau$. By Lemma~\ref{lm: support of the unique stress}, the stress $\omega$ lives on $\st(v_i')\cup \{\tau\backslash v_i'\}$. Hence if $G\in \supp(\omega)$ and $\tau\cap G=\emptyset$, then $G\in \lk(v_i')$. Moreover, the second part of Lemma~\ref{lm: support of the unique stress} implies that for every $v\in G$, the $(k-1)$-face $(G\backslash v)\cup v_i'$ also lies in the support of $\omega$. Now pick any $v'_j\in\tau$ with $j\neq i$. Since $\omega$ also lives on $\st(v'_j)\cup\{\tau\backslash v'_j\}$, it follows that $(G\backslash v)\cup v_i'$ must be in the star of $v_j'$. This implies that replacing any two vertices of $G$ with any two vertices of $\tau$ yields a $(k-1)$-face of $\Delta$. By Lemma~\ref{lm: support of the unique stress}, this face also participates in $\omega$. Iterating this argument, we conclude that $\skel_{k-1}(\partial\overline{\tau}*\overline{G})$ is a subcomplex of $\Delta$. 
		
		To complete the proof, observe that any two missing $k$-faces $K, K'$ of $\Delta$ must be disjoint. (Indeed, if $v\in K\cap K'$, then $\Stress^a_k\big(\st(v)\cup \{K\backslash v, K'\backslash v\}\big)$ has dimension $2$, contradicting the assumption that $g_k(\Delta)=1$.) Since $|\tau\cup G|=2k+1<2k+2$, it follows that $\tau$ is the unique missing $k$-face of $\Delta[\tau\cup G]$. Moreover, because $\skel_{k-1}(\partial\overline{\tau}*\overline{G})$ is a subcomplex of $\Delta$, we conclude that $\skel_{k}(\partial\overline{\tau}*\overline{G})$ is also a subcomplex of $\Delta$. Finally, since $\Delta$ has no missing faces of dimension greater than $k$, it follows that $\partial\overline{\tau}*\overline{G}$ is a subcomplex of $\Delta$.
	\end{proof}
	
	We are ready to complete the proof of Theorem~\ref{main-thm:g=1} by verifying the following result.
	\begin{proposition} \label{prop:d=2u,missing-u-face}
		Let $\Delta\in S(k, 2k-1)\backslash S(k-1,2k-1)$ be  a sphere with $g_k=1$, and let $\tau$ be a missing $k$-face of $\Delta$.
		Then $\Delta$ is the join of $\partial\overline{\tau}$ and a  $(k-1)$-sphere. 
	\end{proposition}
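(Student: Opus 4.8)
\medskip\noindent{\it Proof idea.} By Theorem~\ref{thm: Lefschetz}, $\dim_\F\Stress^a_k(\Delta,q)=g_k(\Delta)=1$, so $\Stress^a_k(\Delta,q)=\F\omega$; recall also (from the proof of Lemma~\ref{lm: partial support}) that distinct missing $k$-faces of $\Delta$ are disjoint and that $\Delta$ has no missing faces of dimension $>k$. I would first observe that $\tau\backslash v_i'\in\supp(\omega)$ for each $v_i'\in\tau$: otherwise, since by Lemma~\ref{lm: support of the unique stress} the only $(k-1)$-face of $\st(v_i')\cup\{\tau\backslash v_i'\}$ lying outside $\st(v_i')$ is $\tau\backslash v_i'$ itself, $\omega$ would live on $\st(v_i')=v_i'*\lk(v_i')$ and hence lie in $\Stress^a_k(\st(v_i'))$; but the cone lemma identifies this space with $\Stress^a_k(\lk(v_i'))$, whose dimension is $g_k(\lk(v_i'))=h_k-h_{k-1}=0$ by Dehn--Sommerville (as $\lk(v_i')$ is a $(2k-2)$-sphere), forcing $\omega=0$. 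Now set $\Gamma:=\lk(\tau\backslash v_1',\Delta)$; being the link of a $(k-1)$-face of the $(2k-1)$-sphere $\Delta$, it is a $(k-1)$-sphere, and $V(\Gamma)\subseteq V\backslash\tau$ (for $j\neq 1$ the vertex $v_j'$ already lies in $\tau\backslash v_1'$, and $v_1'\notin\Gamma$ since $(\tau\backslash v_1')\cup v_1'=\tau\notin\Delta$). The goal is to prove $\Delta=\partial\overline{\tau}*\Gamma$.

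The crux is the claim that \emph{every facet of $\Gamma$ participates in $\omega$}. Granting it, Lemma~\ref{lm: partial support} gives $\partial\overline{\tau}*\overline{F}\subseteq\Delta$ for every facet $F$ of $\Gamma$, so $\partial\overline{\tau}*\Gamma=\bigcup_F\partial\overline{\tau}*\overline{F}\subseteq\Delta$; since $\partial\overline{\tau}*\Gamma$ is a $(2k-1)$-sphere and a homology sphere contained in a homology sphere of the same dimension must equal it, $\Delta=\partial\overline{\tau}*\Gamma$ with $\Gamma$ a $(k-1)$-sphere, completing the proof. To prove the claim, note that Lemma~\ref{lm: partial support} already gives the reverse containment: every $(k-1)$-face $G$ of $\supp(\omega)$ disjoint from $\tau$ satisfies $(\tau\backslash v_1')\cup G\in\Delta$, hence is a facet of $\Gamma$. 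Thus the set $S$ of $(k-1)$-faces of $\supp(\omega)$ disjoint from $\tau$ is contained in the facet set of $\Gamma$; since $\Gamma$ is a connected pseudomanifold, it suffices to show that $S$ is nonempty and closed under ridge adjacency within $\Gamma$.

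Establishing these two properties is the main obstacle. The tool I would use is the description of $\supp(\omega)$ from Lemma~\ref{lm: support of the unique stress} holding simultaneously for all $v_i'\in\tau$: $\supp(\omega)=\skel_{k-1}(v_i'*C_i)$ with $C_i=\supp(\omega)\backslash v_i'\subseteq\lk(v_i')\cup\{\tau\backslash v_i'\}$, so that any $(k-1)$-face of $\supp(\omega)$ missing $v_i'$ and different from $\tau\backslash v_i'$ must lie in $\lk(v_i',\Delta)$; equivalently, via Lemma~\ref{cone lemma-affine}, $C_1=\supp(\omega')$ for the unique affine $k$-stress $\omega'$ of $\lk(v_1')\cup\{\tau\backslash v_1'\}$ (a $(2k-2)$-sphere with one missing $(k-1)$-face filled in), and one wants to show $C_1$ contains $\Gamma$. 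Starting from $\tau\backslash v_1'\in\supp(\omega)$ and the fact that $\supp(\omega)\neq\partial\overline{\tau}$ (else $\omega\in\Stress^a_k(\partial\overline{\tau})=0$), the plan is to propagate through $\supp(\omega)$ by swapping vertices of $\tau$ in and out of faces, using the $(k-1)$-skeletal cone structure. The delicate steps are the ``reverse swap''---passing from a facet of $\supp(\omega)$ with exactly one vertex of $\tau$ to one with none---and the ridge step for $S$; I expect both to require the additional input that, for $j\neq 1$, $\lk(\tau\backslash\{v_1',v_j'\},\Delta)$ is a $k$-sphere whose unique missing edge is $v_1'v_j'$, together with the disjointness of missing $k$-faces and the absence of missing faces of dimension $>k$, in order to rule out facets of $\Delta$ using fewer than $k$ vertices of $\tau$. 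Once $S$ is shown to be the full facet set of $\Gamma$, the claim and hence the proposition follow.
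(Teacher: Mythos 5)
Your reduction is the right one and matches the paper's strategy at the structural level: you correctly observe that every $(k-1)$-face $G\in\supp(\omega)$ disjoint from $\tau$ satisfies $\partial\overline{\tau}*\overline{G}\subseteq\Delta$ by Lemma~\ref{lm: partial support}, hence lies in $\Gamma=\lk(\tau\backslash v_1')$; that if the set $S$ of such $G$'s were the full facet set of $\Gamma$ then $\partial\overline{\tau}*\Gamma\subseteq\Delta$ forces equality; and that since $\Gamma$ is a strongly connected pseudomanifold it suffices to show $S$ is nonempty and has no ``boundary ridge'' (equivalently, no $(k-2)$-face of $\supp(\omega)$ disjoint from $\tau$ lies in exactly one member of $S$). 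This is exactly the dichotomy the paper sets up (the $S_i$'s are the links $\lk(\tau\backslash v_i')$, and $S=(\supp(\omega))[V\backslash\tau]$).

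However, you explicitly leave the two cases---$S$ empty, and $S$ nonempty but with a boundary ridge---unproved, and the propagation/swap strategy you sketch for them is not developed and does not obviously close. This is a genuine gap, and it is precisely where the paper's argument has content. The tool the paper uses is a partial-derivative/genericity argument, not a combinatorial swap. Concretely: if there is a $(k-2)$-face $H\subseteq V\backslash\tau$ lying in a unique $(k-1)$-face $G\in S$, expand $\omega = x_H\bigl(\sum_i\alpha_i x_{v_i}+\beta x_{v_k}+\sum_j\gamma_j x_{v_j'}\bigr)+\sum_{x_H\nmid\mu}\delta_\mu\mu$, where by Lemma~\ref{lm: support of the unique stress} every $(k-1)$-face of $\supp(\omega)$ containing $H$ either equals $G$ or has the form $H\cup v_j'$. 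Then $\partial_{x_H}\omega$ is a nonzero affine $1$-stress on $\Delta$, i.e.\ a nontrivial affine dependence among at most $|H|+1+(k+1)=2k+1$ of the generic points $q(v)\in\R^{2k}$, which is impossible. In the empty case one takes $H$ a $(k-2)$-face of $\supp(\omega)$ maximizing $|H\cap(V\backslash\tau)|$; then $\partial_{x_H}\omega$ is supported on at most $2k$ generic points, again impossible. You should replace the ``swap/ridge'' heuristics with this derivative-and-genericity argument; everything else in your outline (including the preliminary observations that $\Stress^a_k(\Delta,q)$ is one-dimensional, that distinct missing $k$-faces are disjoint, and that $\tau\backslash v_i'\in\supp(\omega)$) is sound.
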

	\begin{proof}
		We continue with the notation introduced above; in particular, we write $\tau=\{v'_1,\ldots,v'_{k+1}\}$. We also let $V\backslash \tau=\{v_1,\dots, v_m\}$. Then for each $i=1,\ldots,k+1$, the link of $\tau\backslash v'_i$ is a $(k-1)$-sphere, which we denote by $S_i$. Our goal is to show that $S_1=\dots=S_k$.
		
		By Lemma~\ref{lm: partial support}, if a face $G\in \Delta[V\backslash \tau]$ participates in $\omega$, then $G$ must belong to the intersection of $S_1,\dots,S_{k+1}$. Consequently, if not all $(k-1)$-spheres $S_1,\dots,S_{k+1}$ are identical, then $(\supp(\omega))[V\backslash \tau]$ is a proper (possibly empty) subcomplex of $S_1$. It follows that either no $(k-1)$-face of $\Delta[V\backslash \tau]$ participates in $\omega$ or there exists a $(k-2)$-face $H\in \Delta[V\backslash \tau]$ that is contained in a unique $(k-1)$-face $G \in (\supp(\omega))[V\backslash \tau]$. 
		
		We claim that neither of these two cases can occur. We begin with the latter case. By relabeling the vertices if necessary, we may assume that $H=\{v_1,\dots,v_{k-1}\}$ and $G=\{v_1,\dots,v_k\}$. Then the stress $\omega$ can be written in the form
		$$\omega = x_H \left[\sum_{i=1}^{k-1} \alpha_i x_{v_i} + \beta x_{v_k}+ \sum_{j=1}^{k+1}\gamma_j x_{v'_j}\right] + \sum_{x_H \nmid \mu} \delta_\mu \mu.$$
		Consequently, $\partial_{x_H}\omega=\sum_{i=1}^{k-1} 2\alpha_i x_{v_i} + \beta x_{v_k}+ \sum_{j=1}^{k+1}\gamma_j x_{v'_j}$ is a non-trivial affine $1$-stress on $\Delta$. However, the corresponding affine dependence involves only $2k+1$ points in $\R^{2k}$. Since the embedding is generic, these points cannot be affinely dependent, and therefore no such affine $1$-stress can exist.

		The former case can be handled in a similar manner. 
		For each $(k-2)$-face $H'$ in $\supp(\omega)$, consider the size of $H'\cap\{v_1,\dots,v_m\}$, and let $H$ be a face for which this size is maximized. 
		Say, $H=\{v_1,\dots,v_\ell,v_1',\dots,v_{k-1-\ell}'\}$ for some $0\leq \ell\leq k-1$. Then $\omega$ can be written as
		$$\omega = x_{H'}\left[\sum_{i=1}^{\ell} \alpha_i x_{v_i} + \sum_{j=1}^{k+1} \gamma_jx_{v'_j}\right]+ \sum_{x_{H'}\nmid \mu}\delta_\mu \mu.$$ Taking the partial derivative $\partial_{x_{H'}}$, we  obtain an affine dependence among $\ell+(k+1)\leq 2k$ generic points in $\R^{2k}$. This is again impossible.
		
		Hence all the spheres $S_1, \dots, S_{k+1}$ are the same sphere $S$, and $\partial\overline{\tau} * S$ is a subcomplex of $\Delta$. Since $\partial\overline{\tau} * S$ is a $(2k-1)$-sphere, we must have $\Delta=\partial\overline{\tau} * S$.
	\end{proof}

	\section{Proof of the second main result}
	
	The goal of this section is to characterize all $5$-spheres in $S(2,5)$ with $g_3=1$; see Theorem~\ref{main-thm: S(2,5)}. Throughout this section, let $\Delta\in S(2,5)$ be a sphere with $g_3(\Delta)=1$, equipped with an embedding $q$ that satisfies the conditions of Lemma~\ref{lm1}. Following the notation of Lemma~\ref{lm1}, let $\omega\in \Stress^a_3(\Delta,q;\F)$ be the unique (up to a scalar) nontrivial affine $3$-stress on $\Delta$. By Lemma~\ref{lm1}, for each edge $ab\in\Delta$, the support of $\omega$ is contained in $\st(a)\cup \st(b)$. Below, we exploit this observation to  provide several properties of the support of $\omega$, which, in turn, enable us to prove results about the structure of $\Delta$.
	
	\begin{lemma}\label{lm: vertices in the support}
		Every vertex of $\Delta$ is in $\supp(\omega)$. 
	\end{lemma}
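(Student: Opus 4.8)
The plan is to argue by contradiction: suppose some vertex $u$ of $\Delta$ does not participate in $\omega$. The first step is to observe that this is equivalent to $\partial_{x_u}\omega = 0$, since $\omega$ is determined by its squarefree part and $u\notin\supp(\omega)$ means no squarefree monomial $x_\sigma$ with $u\in\sigma$ appears. I would then pick a neighbor $v$ of $u$ (one exists since $\Delta$ is a $5$-sphere, hence every vertex has nonempty link). By Lemma~\ref{lm1} applied to the edge $uv$, we know $\supp(\omega)\subseteq\st(u)\cup\st(v)$, and moreover there is a face $\sigma\in\supp(\omega)$ with $u\in\sigma$ but $\sigma\notin\st(v)$. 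But $u\in\sigma$ directly contradicts $u\notin\supp(\omega)$. So the conclusion is immediate \emph{provided} we can apply Lemma~\ref{lm1} to some edge through $u$, which we can because $\lk(u)$ is a nonempty $4$-sphere.

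Wait — I should double-check that Lemma~\ref{lm1} genuinely forces a face containing $u$ into the support, regardless of whether $u$ was assumed absent. Re-reading the statement: yes, Lemma~\ref{lm1} says there exists $\bar\omega\in\Stress^a_k(\Delta,q;\F)$ with $\supp(\bar\omega)\subseteq\st(u)\cup\st(v)$ and a face $\sigma\in\supp(\bar\omega)$ with $u\in\sigma$, $\sigma\notin\st(v)$. Since $g_3(\Delta)=1$, the affine $3$-stress space is one-dimensional, so $\bar\omega$ is a nonzero scalar multiple of $\omega$, and therefore $\supp(\bar\omega)=\supp(\omega)$. Hence $\sigma\in\supp(\omega)$ and $u\in\sigma$, so $u\in\supp(\omega)$. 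This is the whole argument.

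So the proof is genuinely short: the only thing to verify carefully is the hypothesis of Lemma~\ref{lm1}, namely that $\Delta\in S(k-1,2k-1)=S(2,5)$ and the embedding is the generic one from that lemma — both are granted by the standing assumptions of this section — and that $u$ has at least one neighbor. I anticipate no real obstacle; the content was already packed into Lemma~\ref{lm1}, and Lemma~\ref{lm: vertices in the support} is essentially a corollary extracting the ``every vertex participates'' consequence by letting $u$ range over all vertices. If anything needs a sentence of care, it is the reduction ``$u\notin\supp(\omega)\iff$ no squarefree monomial divisible by $x_u$ appears in $\omega$'' together with the fact from \cite{Lee96} that $\omega$ is determined by its squarefree part, so that $u\notin\supp(\omega)$ would indeed contradict the existence of $\sigma\in\supp(\omega)$ with $u\in\sigma$.
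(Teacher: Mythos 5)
Your proof is correct and follows the paper's argument: apply Lemma~\ref{lm1} to any edge containing the given vertex, then use that $g_3(\Delta)=1$ forces the resulting stress $\bar\omega$ to be a scalar multiple of $\omega$, so the face $\sigma\ni u$ from the lemma lies in $\supp(\omega)$. The contradiction framing and the side remark about $\partial_{x_u}\omega=0$ are unnecessary detours, but the core step is exactly the one the paper uses.
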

	\begin{proof}
		By Lemma~\ref{lm1}, for every edge $ab\in \Delta$, the support of $\omega$ is contained in $\st(a)\cup \st(b)$; furthermore, there exist faces $F_a,F_b\in \supp(\omega)$ such that $a\in F_a$, $b\in F_b$, but $F_a, F_b\notin \st(ab)$. In particular, $a$ and $b$ are in the support of $\omega$. As every vertex is in some edge of $\Delta$, this implies that every vertex of $\Delta$ participates in $\omega$.
	\end{proof}
	\begin{lemma}\label{lm: edges in the support}
		Every missing $2$-face of $\Delta$ has at least two edges that lie in $\supp(\omega)$.
	\end{lemma}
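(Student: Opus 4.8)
The plan is to recast the statement as a condition on $\omega$ itself and then derive a contradiction by ``coning off'' a vertex. First, for an edge $uv\in\Delta$ one has $uv\in\supp(\omega)$ if and only if $\partial_{x_u}\partial_{x_v}\omega\neq 0$: the forward implication is immediate, and for the converse, if no squarefree monomial of $\omega$ were divisible by $x_ux_v$ while, say, $\omega_{x_u^2x_v}\neq 0$, then $\partial_{x_u}\partial_{x_v}\omega$ would be a nonzero affine $1$-stress supported on the two points $p(u),p(v)$, impossible for a generic embedding (the same affine-dependence obstruction used in the proof of Proposition~\ref{prop:d=2u,missing-u-face}). So it suffices to show that at most one of $\partial_{x_a}\partial_{x_b}\omega$, $\partial_{x_b}\partial_{x_c}\omega$, $\partial_{x_c}\partial_{x_a}\omega$ vanishes. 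Suppose two of them vanish; relabeling, assume $\partial_{x_a}\partial_{x_b}\omega=\partial_{x_a}\partial_{x_c}\omega=0$, i.e.\ no monomial of $\omega$ is divisible by $x_ax_b$ or by $x_ax_c$. In particular $\partial_{x_a}\omega$ involves neither $x_b$ nor $x_c$, and it is nonzero by Lemma~\ref{lm: vertices in the support}.

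Next I would exploit $\partial_c\omega=0$. Writing $\omega=\sum_{j=0}^3 x_a^j\,\omega_j$ with each $\omega_j$ a polynomial in $\{x_v:v\neq a\}$, and comparing coefficients of the powers of $x_a$ in $\partial_c\omega=0$, one gets $\omega_1=-\partial_{c'}\omega_0$, $\omega_2=\tfrac12\partial_{c'}^2\omega_0$, $\omega_3=-\tfrac16\partial_{c'}^3\omega_0$, with $c'=\sum_{v\neq a}x_v$; hence $\omega=A(x_v-x_a\,:\,v\in V(\Delta)\setminus a)$, where $A:=\omega|_{x_a=0}$. Thus $\omega$ is coned off $a$. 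The crucial structural claim is then $\supp(A)\subseteq\lk(a)$. Here I would combine: (i) that $(\partial_{x_a}\omega)|_{x_a=0}$ and $(\partial_{x_a}^2\omega)|_{x_a=0}$ are supported on $\lk(a)$, since $\partial_{x_a}^{i}\omega$ lives on $\st(a)$ and we are setting $x_a=0$ (and up to scalars these are $-\partial_{c'}A$ and $\partial_{c'}^2A$); (ii) Lemma~\ref{lm1} applied to the edges $ab$ and $ac$, giving $\supp(\omega)\subseteq\st(a)\cup\st(b)$ and $\supp(\omega)\subseteq\st(a)\cup\st(c)$, so any generating $2$-face of $\supp(A)$ not in $\lk(a)$ would have to lie in $\st(b)\cap\st(c)$; and (iii) the identity $\Delta=\Delta(2)$, valid for $\Delta\in S(2,5)$, together with one more small affine-dependence obstruction and the severe restriction on missing $2$-faces imposed by $g_3(\Delta)=1$ (cf.\ the disjointness argument in the proof of Lemma~\ref{lm: partial support}), to exclude such ``phantom'' generating faces.

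Granting $\supp(A)\subseteq\lk(a)$, the contradiction is a Lefschetz count on the $4$-sphere $\lk(a)$. The polynomial $A$ has degree $3$, is supported on faces of $\lk(a)$, and is annihilated by the six operators $\partial_{\tilde\theta_i}$, $i\in[6]$, where $\tilde\theta_i=\sum_{v\neq a}(p(v)_i-p(a)_i)x_v$; since the numbers $p(v)_i-p(a)_i$ are algebraically independent over $\Q$, the map $v\mapsto (p(v)_i-p(a)_i)_{i\in[6]}$ restricts to a generic $6$-embedding of $\lk(a)$. Taking $\tilde\theta_1,\dots,\tilde\theta_5$ as an l.s.o.p.\ for $\field[\lk(a)]$, we obtain $A\in\Stress^\ell_3(\lk(a),p'')$ with $\partial_{\tilde\theta_6}A=0$. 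But $\lk(a)$ is a $4$-sphere, so $d'=5$ is odd and $3=(d'+1)/2$, and Theorem~\ref{thm: Lefschetz} (together with the standard fact that a linear form generic over the field of definition of $p''$ still realizes the isomorphism) forces $\partial_{\tilde\theta_6}\colon\Stress^\ell_3(\lk(a),p'')\to\Stress^\ell_2(\lk(a),p'')$ to be injective. Hence $A=0$, so $\omega=0$, contradicting nontriviality of $\omega$.

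The step I expect to be the real obstacle is proving $\supp(A)\subseteq\lk(a)$: the coning identity $\omega=A(x_v-x_a)$ holds for \emph{every} affine $3$-stress, so it is precisely here that the hypothesis ``$\{a,b,c\}$ is a missing $2$-face with both of its edges at $a$ outside $\supp(\omega)$'' must genuinely enter, and tracking which generating $2$-faces of $\supp(A)$ can fail to extend to faces of $\st(a)$ requires a careful combinatorial analysis resting on Lemma~\ref{lm1}, the $S(2,5)$-structure, and genericity. The remaining pieces—the reformulation via second partials, the coning identity, and the Lefschetz vanishing on $\lk(a)$—should be routine.
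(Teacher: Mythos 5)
Your approach is genuinely different from the paper's, and it has a real gap at exactly the point you flagged. The paper proceeds more directly: by uniqueness of $\omega$ (up to scalar) and the construction in Lemma~\ref{lm1}, it writes $\omega = \iota_a(\phi_a^{-1}(\alpha))-\iota_b(\phi_b^{-1}(\alpha))$ with $\alpha$ a linear $2$-stress on $\st(ab)$; since $\phi_a^{-1}(\alpha)=\psi_a((\phi'_a)^{-1}(\alpha'))$ is a lifted stress whose support, by Lemma~\ref{cone lemma-linear}, is of the form $\skel_{2}(a*C)$, membership $y\in\supp(\phi_a^{-1}(\alpha))$ forces $ay\in\supp(\phi_a^{-1}(\alpha))$; and since $ay\notin\st(b)$ (as $aby$ is missing), it cannot cancel against $\phi_b^{-1}(\alpha)$, giving $ay\in\supp(\omega)$. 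Applying this to each vertex of $aby$ yields the lemma. So the paper never needs to analyze the whole of $\omega|_{x_a=0}$; it only needs the one fact that lifted stars have support $\skel(a*C)$, applied to the specific edge that Lemma~\ref{lm1} singles out.

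Your preliminary steps are sound: the equivalence $uv\in\supp(\omega)\Leftrightarrow\partial_{x_u}\partial_{x_v}\omega\neq 0$ (via the two-point affine-dependence obstruction), the coning identity $\omega=A(x_v-x_a:v\neq a)$ with $A=\omega|_{x_a=0}$, and, granting $\supp(A)\subseteq\lk(a)$, the Lefschetz vanishing on the $4$-sphere $\lk(a)$ (modulo the genericity-of-Lefschetz-elements fact you invoke, which is standard). But the load-bearing claim $\supp(A)\subseteq\lk(a)$ is not proved, and the considerations (i)--(iii) you list do not close it: (i) only tells you $\partial_{c'}A$ and $\partial_{c'}^2A$ are supported on $\lk(a)$, which does not control $A$ itself because of possible cancellations; (ii) only restricts phantom generating faces of $A$ to lie in $\st(b)\cap\st(c)\setminus\st(a)$, and you give no mechanism to exclude them; and (iii) the $\Delta=\Delta(2)$ identity and the disjointness trick from Lemma~\ref{lm: partial support} concern missing $3$-faces, which don't exist when $\Delta\in S(2,5)$, so it is unclear what restriction on missing $2$-faces you intend to extract before Proposition~\ref{prop: complete graph} is available (that proposition cites this very lemma, so using it here would be circular). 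As it stands the proposal is an incomplete sketch; the crucial middle step remains open, whereas the paper's direct use of the lifted-stress support structure from Lemma~\ref{cone lemma-linear} sidesteps the issue entirely.
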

	\begin{proof}
		Consider a missing $2$-face $aby$. It suffices to show that for every vertex of $aby$, at least one of the edges of $aby$ containing that vertex lies in $\supp(\omega)$. Without loss of generality, consider vertex $y$ and view $\omega$ as an affine $3$-stress living on $\st(a)\cup \st(b)$. Following the notation of the proof of Lemma~\ref{lm1}, we can write (a multiple of) $\omega$ as $\omega = \iota_a(\phi_a^{-1}(\alpha))-\iota_b(\phi_b^{-1}(\alpha))$ for some linear $2$-stress $\alpha$ that lives on $\st(ab)$. By Lemma~\ref{lm: vertices in the support}, we have $y\in\supp(\omega)$, and hence $y$ must lie in the support of either $\phi_a^{-1}(\alpha)$ or $\phi_b^{-1}(\alpha)$. Assume without loss of generality that $y$ lies in the support of $\phi_a^{-1}(\alpha)=\psi_a((\phi_a')^{-1}(\alpha'))$. Here, following the notation of the proof of Lemma~\ref{lm1}, $\psi_a$ is the lifting map from $(\lk(a), p')$ to $(\st(a), p)$, $\phi_a': \Stress^\ell_3(\lk(a), p')\to \Stress^\ell_2(\lk(a), p')$ is an isomorphism, and $\alpha'$ is a linear $2$-stress on $(\lk(a), p')$. It then follows from Lemma~\ref{cone lemma-affine} that the edge $ay$ also lies in the support of $\phi_a^{-1}(\alpha)$. Since $ay\notin \st(b)$, it does not lie in the support of  $\phi_b^{-1}(\alpha)$. Consequently, $ay\in \supp(\omega)$, as desired.
	\end{proof}
	
	We now use the properties established above to show that the graph of $\Delta$ must be a complete graph. We begin by proving the following weaker result.
	\begin{lemma} \label{lm:mulitpartite}
		The graph of $\Delta$ is a complete multipartite graph with at least six parts.
	\end{lemma}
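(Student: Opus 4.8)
The plan is to show that the graph of $\Delta$ has no induced path on three vertices, which is equivalent to being a disjoint union of cliques on the complement, i.e., complete multipartite. Concretely, I would argue that $\Delta$ cannot contain three vertices $x,y,z$ with $xy,yz\in\Delta$ but $xz\notin\Delta$. Suppose such a configuration exists. Then $xz$ is a missing $1$-face; but wait---$\Delta\in S(2,5)$ allows missing $2$-faces, so I should instead work with the missing edges directly. The key point is the following: if $xz\notin\Delta$ is a missing edge and $y$ is a common neighbor, then I want to produce a second nontrivial affine $3$-stress (or otherwise contradict $g_3=1$), using Lemma~\ref{lm1} applied cleverly to the edges $xy$ and $yz$.

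First I would record, using Lemma~\ref{lm: vertices in the support} and Lemma~\ref{lm: edges in the support}, that $\supp(\omega)$ contains every vertex, and that every missing $2$-face contains at least two edges of $\supp(\omega)$. Then, to rule out an induced path $x - y - z$ with $xz\notin\Delta$: apply Lemma~\ref{lm1} to the edge $xy$ to see $\supp(\omega)\subseteq\st(x)\cup\st(y)$, and separately apply it to the edge $yz$ to see $\supp(\omega)\subseteq\st(y)\cup\st(z)$. Intersecting, $\supp(\omega)\subseteq \bigl(\st(x)\cup\st(y)\bigr)\cap\bigl(\st(y)\cup\st(z)\bigr)=\st(y)\cup\bigl(\st(x)\cap\st(z)\bigr)$. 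But faces of $\st(x)\cap\st(z)$ that contain both $x$ and $z$ would force $xz\in\Delta$, so actually every face of $\supp(\omega)$ either lies in $\st(y)$ or avoids at least one of $x,z$. Now the ``escape'' clause of Lemma~\ref{lm1} applied to $xy$ guarantees a face $\sigma\in\supp(\omega)$ with $x\in\sigma$, $\sigma\notin\st(y)$; such a $\sigma$ must avoid $z$, i.e. lie in $\lk$-territory of $x$ away from $y$ and $z$. Symmetrically applying Lemma~\ref{lm1} to $yz$ produces a face with $z$ not in $\st(y)$. The contradiction should come from combining these: the first application (to $xy$) forces some structure incompatible with the second (to $yz$), because both describe the \emph{same} one-dimensional space $\Stress^a_3(\Delta,q)=\langle\omega\rangle$, so the supports must be literally equal, not just both contained in the relevant unions. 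The hard part will be extracting the precise contradiction---I expect one needs to use that $\omega$ restricted to $\st(x)$ has the cone structure $\skel_2(x*C_x)$ from Lemma~\ref{cone lemma-affine} (as in Lemma~\ref{lm: support of the unique stress}), and that $xz$ being a missing edge while $y$ is a common neighbor creates an incompatibility with the cone structure at $y$.

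Having established that the graph $G(\Delta)$ is complete multipartite, I would then bound the number of parts from below by six. Since $\dim\Delta=5$, every facet has $6$ vertices, and a facet is a clique in $G(\Delta)$; in a complete multipartite graph a clique picks at most one vertex from each part, so there are at least $6$ parts. This gives the ``at least six parts'' conclusion immediately and completes the lemma. The genuinely delicate step is the multipartite claim itself; the part-count is a one-line consequence of purity and dimension.

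One technical caveat to handle carefully: Lemma~\ref{lm1} requires the embedding to be the specific generic $q$, and we have fixed such a $q$ at the start of the section, so all three applications (to $xy$, $yz$, and later edges) use the \emph{same} embedding and hence the \emph{same} stress $\omega$ up to scalar---this consistency is exactly what powers the contradiction, so I would state it explicitly. I would also need the observation that for a missing edge $xz$ with common neighbor $y$, the face $xy$ and $yz$ are genuine edges of $\Delta$ so Lemma~\ref{lm1} is applicable; and I should double-check the edge case where $y$ is the \emph{only} common neighbor versus several common neighbors, though I expect the argument is uniform.
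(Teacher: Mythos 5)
Your plan targets a different forbidden configuration than the paper does, and the phrasing around it contains an equivalence that is stated backwards. You say you will rule out an induced $P_3$ ($xy, yz \in \Delta$, $xz \notin \Delta$) and claim this ``is equivalent to being a disjoint union of cliques on the complement, i.e., complete multipartite.'' That is not the correct equivalence: forbidding induced $P_3$ in $G(\Delta)$ forces $G(\Delta)$ itself (not its complement) to be a disjoint union of cliques, which for the connected graph of a sphere means a complete graph. What is equivalent to complete multipartite is the absence of an induced $K_2 \sqcup K_1$, and the paper rules out exactly that: it takes a non-edge $ab$ and a vertex $y$ adjacent to $a$ but not $b$, applies Lemma~\ref{lm1} once to the edge $ay$ to get $\supp(\omega) \subseteq \st(a) \cup \st(y)$, and then notes $b \notin \st(a) \cup \st(y)$, contradicting Lemma~\ref{lm: vertices in the support}. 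That is one application of Lemma~\ref{lm1}, plus Lemma~\ref{lm: vertices in the support}, and no escape clause is needed.

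Your argument also has a genuine gap at the end: you produce $\sigma \in \supp(\omega)$ with $x \in \sigma$, $\sigma \notin \st(y)$, observe that $\sigma$ ``must avoid $z$,'' and then stop, writing that ``the hard part will be extracting the precise contradiction'' and speculating that the cone structure of Lemma~\ref{cone lemma-affine} is needed. In fact the finish is immediate and requires neither the cone structure nor the symmetric escape clause for $yz$: the containment clause of Lemma~\ref{lm1} applied to $yz$ gives $\supp(\omega) \subseteq \st(y) \cup \st(z)$; since $\sigma \notin \st(y)$, this forces $\sigma \in \st(z)$, so $\sigma \cup \{z\} \in \Delta$; combined with $x \in \sigma$ this yields $xz \in \Delta$, the desired contradiction. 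The fact you fixated on---whether $z \in \sigma$---is the wrong question; what matters is whether $\sigma \in \st(z)$. With this one line added, your route is valid and actually proves the stronger statement that $G(\Delta)$ is a complete graph (which the paper only reaches later in Proposition~\ref{prop: complete graph}, via Lemma~\ref{lem:contr-edge}); but as written, the proposal leaves the contradiction unextracted. Your ``at least six parts'' argument from purity and $\dim \Delta = 5$ is correct and matches the paper.
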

	\begin{proof}
		Assume that $ab$ is not an edge. We claim that the set of neighbors of $a$ coincides with that of $b$. Indeed, if $y$ is a neighbor of $a$ but not of $b$, then $b\notin \st(y)\cup\st(a)$. Since $ay\in \Delta$, we have $\supp(\omega)\subseteq  \st(a)\cup\st(y)$, and hence $b\notin \supp(\omega)$. This contradicts Lemma~\ref{lm: vertices in the support}.
		
		Let $I$ be a maximal independent set of $\Delta$ (with respect to inclusion). The above claim implies that each vertex in $I$ is connected to all vertices of $\Delta$ that are not in $I$. Since this holds for all maximal independent sets, we conclude that the maximal independent sets partition the vertex set of $\Delta$, and that the graph of $\Delta$ is the complete multipartite graph with these parts. Moreover, since $\dim\Delta=5$, there are at least six parts.
	\end{proof}
	
	To show that $\Delta$ has a complete graph, we need a few additional definitions.
	An edge $uv$ of $\Delta$ is called {\em contractible} if $uv$ is not contained in any missing face. If $uv$ is contractible, then one can contract $uv$ and obtain a new simplicial complex $\Delta'$ by replacing the vertices $u$ and $v$ with a new vertex $u'$.  Every face $\tau$ that contains $u$, $v$, or both is then replaced by $(\tau\backslash\{u,v\})\cup \{u'\}$, while all other faces remain unchanged. The resulting complex $\Delta'$ is a (homology) $5$-sphere; see \cite[Proposition 2.3]{NevoNovinsky}. 
	
	In what follows, we write $(u_1, u_2,\dots, u_j)$ to denote the {\em $j$-cycle} with edges $u_1u_2, \dots, u_{j-1}u_j, u_ju_1$. We also write $H_i(\Delta)$ to denote the $i$-th homology group of $\Delta$ with coefficients in $\Z/2\Z$.

	\begin{lemma} \label{lem:contr-edge}
		Assume that $\Delta\in S(2,5)$ satisfies $g_3(\Delta)=1$. If $\Delta$ has a contractible edge, then $\Delta$ is the join of three $3$-cycles.
	\end{lemma}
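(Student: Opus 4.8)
The plan is to combine the structural constraint from Lemma~\ref{lm:mulitpartite} (the graph of $\Delta$ is complete multipartite with at least six parts) with a careful analysis of what contracting a contractible edge does to $g_3$, the missing faces, and the multipartite structure. Since $\Delta\in S(2,5)$ with $g_3(\Delta)=1$ has no missing faces of dimension $\geq 3$, its missing faces are exactly the non-edges together with the missing $2$-faces, i.e.\ the "empty triangles." Note that $g_3(\Delta)=1$ forces $\Delta$ to be far from a cross-polytope (which would have $g_3=0$ by Remark~\ref{rm: g_k>0}), so the multipartite graph cannot be the octahedral one with six parts of size two; in particular some part has size one, or there are at least seven parts, and in either case $\Delta$ has a contractible edge candidate only among edges joining vertices in different parts that avoid all empty triangles.

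First I would show that if $uv$ is a contractible edge and $\Delta'$ is the contraction, then $\Delta'\in S(2,5)$ and $g_3(\Delta')=g_3(\Delta)-1=0$ (or possibly $g_3(\Delta')$ drops by exactly one); this is the key reduction step. Contracting a contractible edge in a $5$-sphere yields a $5$-sphere by \cite[Proposition 2.3]{NevoNovinsky}, and one checks, e.g.\ via the effect on $f$-vectors or via the retriangulation/stress-space argument, that $f_1$ drops in the right way so that $h_i$ and hence $g_3$ decrease appropriately. One must also verify that $\Delta'$ has no missing faces of dimension $>2$: a missing $3$-face of $\Delta'$ would pull back to a missing $3$- or $4$-face of $\Delta$ (depending on whether it uses the new vertex $u'$), and since $\Delta\in S(2,5)$ the only way this can happen is via a "link of the contracted edge" obstruction, which one rules out using the multipartite structure — a missing $2$-face in $\Delta'$ through $u'$ comes from an empty triangle $xyw$ where $\{x,y\}$ or the pair involving $u,v$ behaves well, and all higher missing faces are excluded because $\lk(uv,\Delta)$ is a $3$-sphere in $S(2,3)$, hence flag, hence an octahedral-type sphere with no empty triangles. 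Once $g_3(\Delta')=0$, Theorem~\ref{thm: GLBT} (or directly the $g_3=0$ classification) identifies $\Delta'$ as a $2$-stacked $5$-sphere; combined with $\Delta'\in S(2,5)$ and Remark~\ref{rm: g_k>0}, this forces $\Delta'$ to be the boundary of the cyclic/cross-polytope-type sphere with $g_3=0$, and being flag with no empty triangles it must be the join of three $3$-cycles on some vertex sets, or more precisely $K(1,5)=(\partial\sigma^1)^{*3}$-like but with triangulated circles — actually $g_2(\Delta')$ also needs to be handled.

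Second, I would reverse-engineer $\Delta$ from $\Delta'$. Knowing $\Delta'$ is a join of three triangulated circles (i.e.\ $C_1 * C_2 * C_3$), and that $\Delta$ is obtained from $\Delta'$ by "un-contracting" — splitting the vertex $u'$ into $u,v$ with $uv$ an edge — one must show that the only splits of a vertex of $C_1*C_2*C_3$ that yield a complex in $S(2,5)$ with $g_3=1$ replace a vertex of one of the $3$-cycles, say $C_1=(u',a,b)$, by an edge, turning $C_1$ into a $4$-cycle $(u,v,a,b)$ — but wait, a $4$-cycle is not a $3$-cycle. So instead the correct picture is that $\Delta$ itself is a join of three cycles $C_1*C_2*C_3$ where $\sum(|C_i|-3)$ tracks $g$; since $g_3=1$ one checks $g_3(C_1*C_2*C_3)=\sum_i g_1(C_i)\cdot(\text{stuff})$ — using $h_i(\Lambda*\Gamma)=\sum h_j(\Lambda)h_{i-j}(\Gamma)$ and $h(C_j)=(1,|C_j|-2,1)$ for a $j$-cycle — and solving $g_3=1$ forces each $|C_i|=3$, i.e.\ $\Delta$ is the join of three $3$-cycles. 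Here the contractible edge is used to guarantee $\Delta$ has this join form in the first place: having a contractible edge together with the complete-multipartite graph structure pins down that $\Delta$ decomposes as a join.

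The main obstacle I anticipate is the second step: showing that the presence of a \emph{contractible} edge (rather than an arbitrary edge) forces the global join decomposition $\Delta = C_1*C_2*C_3$. The multipartite structure gives that non-edges behave like a join at the level of the $1$-skeleton, but lifting this to a join decomposition of the whole complex requires controlling the missing $2$-faces: I would argue that a contractible edge $uv$, lying in no empty triangle, together with $\lk(uv)$ being a flag $3$-sphere with no empty triangles (hence $\cong C*C'$, a join of two cycles), propagates the join structure outward, and then a counting/induction argument on the parts of the multipartite graph closes it. Alternatively, the cleanest route may be: contract to get $\Delta'=C_1*C_2*C_3$ with the contracted vertex $u'$ lying in, say, $C_1=(u',x,y)$; then the vertices $u,v$ in $\Delta$ are both joined to everything in $C_2*C_3$ and to $x,y$, and $uv$ being an edge in no empty triangle forces $uvxy$ and its subsets into $\Delta$ appropriately — but since $uvx,uvy,uxy,vxy$ must all be faces while $uv\cdot(C_2*C_3)$ structure persists, one deduces $\Delta$ restricted to $\{u,v,x,y\}$ is $\partial\sigma^3$ or a $4$-cycle's cone; reconciling this with $g_3(\Delta)=1$ (only one "unit" of $g$ available, already consumed) shows no further missing faces, giving $\Delta = (\text{$3$-cycle})*C_2*C_3$ with all three being $3$-cycles.
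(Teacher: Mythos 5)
Your proposal has a gap that is fatal to the whole strategy, and it stems from a claim that is not merely unverified but outright false: that the contracted sphere $\Delta'$ lies in $S(2,5)$. Once you compute $g_3(\Delta') = g_3(\Delta) - g_2(\lk(uv,\Delta)) = 0$, Remark~\ref{rm: g_k>0} (with $d=6$, $k=3$) tells you that \emph{every} sphere in $S(3,5)$, and a fortiori every sphere in $S(2,5)$, has $g_3\geq 1$. So $\Delta'$ cannot be in $S(2,5)$; it must have a missing face of dimension $\geq 4$. Your sub-argument trying to rule out missing $3$- and $4$-faces of $\Delta'$ is therefore doomed, and the assertion that $\Delta'$ is "flag with no empty triangles" and "must be the join of three $3$-cycles" is not salvageable. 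Indeed the paper's proof goes exactly the other way: it exploits the existence of a missing $4$-face $\sigma\cup u'$ of $\Delta'$ as the engine of the argument, using Alexander duality and the contraction map to show $\Delta[\sigma\cup uv]$ is a join of two $3$-cycles, then bootstrapping to the full join decomposition.

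Your fallback idea — that $\Delta$ is a join $C_1*C_2*C_3$ of cycles and that $g_3(\Delta)=1$ together with the $h$-vector convolution $h(C_i)=(1,n_i-2,1)$ forces $n_i=3$ for each $i$ — is a perfectly good endgame (it gives $g_3 = a_1a_2a_3 + 2(a_1+a_2+a_3) - (a_1a_2+a_1a_3+a_2a_3) - 3$ with $a_i=n_i-2$, and this equals $1$ only when all $a_i=1$). But you explicitly flag that establishing the join decomposition of $\Delta$ is "the main obstacle," and you never overcome it; the paragraph of ideas about propagating structure from $\lk(uv)$ and un-contracting a vertex of $C_1$ is not a proof. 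That decomposition is precisely the entire content of the lemma, so what you have is a correct observation about what the lemma would reduce to, plus an incorrect route ($\Delta'\in S(2,5)$) toward proving the reduction. Compare with the paper: it keeps $\Delta'$ outside $S(2,5)$, identifies its unique missing $4$-face and shows there is no missing $5$-face, then recognizes $\Delta'(2)$ as a join of a $6$-simplex with a cycle, and finally unwinds the contraction to recover $\Delta = \partial\overline\tau * (a,b,u) * (y,z,v)$.
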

	\begin{proof}
		Let $uv$ be a contractible edge of $\Delta$, and let $\Delta'$ be the $5$-sphere obtained from $\Delta$ by contracting $uv$ to a new vertex $u'$. Then $g_3(\Delta')=g_3(\Delta)-g_2(\lk(uv, \Delta))$. Since $\Delta\in S(2,5)$, $\lk(uv)\in S(2,3)$. Hence $g_2(\lk(uv))=1$ and $g_3(\Delta')=0$. By Theorem~\ref{thm: GLBT}, $\Delta'$ is therefore $2$-stacked. Moreover, by \cite[Theorem 1.3]{NevoNovinsky}, the conditions $g_2(\lk(uv))=1$ and $\lk(uv)\in S(2,3)$ imply that $\lk(uv)=\partial \overline{\tau} *C$, where $\overline{\tau}$ is a $2$-simplex and $C$ is a cycle. Note that $\tau\notin \Delta'$, for otherwise $\tau$ would be part of a missing face of dimension greater than $2$ in $\Delta$. Hence $\partial \tau$ is an induced cycle of $\Delta'$. Therefore, $\Delta'$ is $2$-stacked but not stacked, and as such it must have at least one missing $4$-face. 
		
		Let $\{a,b,y,z,u'\}$ be a missing $4$-face of $\Delta'$, and write $\sigma=\{a,b,y,z\}$. Then $\Delta[\sigma\cup uv]$ contracts to $\Delta'[\sigma\cup u']=\partial \overline{\sigma\cup u'}$; in particular, $\sigma\in \Delta$. We claim that $\Delta[\sigma\cup uv]$ is the join of two $3$-cycles. First, observe that the subcomplex of $\Delta$ induced by $V(\Delta)\backslash(\sigma\cup uv)$ and the subcomplex of $\Delta'$ induced by $V(\Delta')\backslash(\sigma\cup u')$ are identical. Thus, by Alexander duality, $H_3(\Delta[\sigma\cup uv])=H_3(\Delta'[\sigma\cup u'])\neq 0$. Since $\Delta$ has no missing $4$-faces and $\Delta[\sigma\cup uv]$ has only six vertices, it follows that $\Delta[\sigma\cup uv]$ must contain the join of two $3$-cycles as a subcomplex. Finally, since $\Delta$ has no missing faces of dimension greater than $2$, we conclude that $\Delta[\sigma\cup uv]$ is precisely the join of two $3$-cycles. Since $uv$ is not contained in any missing $2$-face, we may assume without loss of generality that $\Delta[\sigma\cup uv]=(a,b,u)*(y,z,v)$. Then $\lk(uv, \Delta)$ contains the $4$-cycle $(a,y,b,z)$. Thus, by the discussion in the previous paragraph, $\lk(uv, \Delta)$ must be $\partial \overline{\tau}*(a,y,b,z)$. In particular, since every missing face of dimension $\geq 4$ in $\Delta'$ contains $u'$, we see that $\sigma\cup u'$ is the only missing $4$-face of $\Delta'$.
		
		Similarly, if $\Delta'$ has a missing $5$-face, then it is of the form $\sigma'\cup u'$. Consider the complex $\Delta[\sigma'\cup uv]$ that contracts to $\Delta'[\sigma'\cup u']=\partial \overline{\sigma'\cup u'}$. The same argument as above shows that 		
		$\Delta[\sigma\cup uv]$ is a complex on $7$ vertices with $H_4\neq 0$ and no missing faces of dimension larger than $2$. Since no such $7$-vertex complex exists, it follows that  $\Delta'$ has no missing $5$-faces.

		Therefore, the $2$-stacked $6$-ball $\Delta'(2)$ is the join of the simplex $\overline{\sigma\cup u'}$ and a cycle. Since $\partial \overline{\tau}\subseteq \lk(uv)\subseteq \Delta$, this cycle must be $\partial\overline{\tau}$. It follows that $\Delta=\partial\overline{\tau}*\Delta[\sigma\cup uv]=\partial\overline{\tau}* (a,b,u)*(y,z,v)$, as desired.
	\end{proof}
	
	\begin{proposition}\label{prop: complete graph}
		The graph of $\Delta$ is a complete graph. Moreover, every edge in the support of $\omega$ is contained in at most one missing $2$-face.
	\end{proposition}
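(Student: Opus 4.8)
The plan is to establish the two assertions in turn, the first feeding into the second. First I would dispose of the case in which $\Delta$ has a contractible edge. By Lemma~\ref{lem:contr-edge}, such a $\Delta$ is the join of three $3$-cycles; in a join every edge between distinct factors is present and each $3$-cycle is a triangle, so the graph of $\Delta$ is complete; moreover a triple of vertices meeting two or three of the cycles is a face, so the only missing $2$-faces of $\Delta$ are the three pairwise disjoint filling triangles of the cycles. Hence no edge lies in two missing $2$-faces, and both assertions hold in this case. From now on I may therefore assume that every edge of $\Delta$ lies in some missing $2$-face. By Lemma~\ref{lm:mulitpartite} the graph of $\Delta$ is then complete multipartite with parts $P_1,\dots,P_r$, $r\ge 6$, and I would argue by contradiction that each part is a singleton. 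Suppose $a,b$ are distinct vertices of a common part, and let $N$ denote their shared neighbourhood. Since $a\in\supp(\omega)$ by Lemma~\ref{lm: vertices in the support} and $\supp(\omega)$ is generated by its $2$-faces, there is a $2$-face $F=\{a,x,x'\}\in\supp(\omega)$, with $x,x'\in N$ because $ax,ax'\in\Delta$. Fix any $c\in N$. Then $bc\in\Delta$, so by Lemma~\ref{lm1} we have $\supp(\omega)\subseteq\st(b)\cup\st(c)$; since $a\in F$ and $ab\notin\Delta$, the face $F$ cannot lie in $\st(b)$, so $F\in\st(c)$, i.e.\ $F\cup\{c\}\in\Delta$. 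In particular $\{a,x,c\}\in\Delta$ for every $c\in N$. But then the edge $ax$ lies in no missing $2$-face: a missing $2$-face through $ax$ would have the form $\{a,x,c\}$ with $c$ adjacent to $a$, hence $c\in N$, and therefore $\{a,x,c\}\in\Delta$, a contradiction. This contradicts the standing assumption, so the graph of $\Delta$ is complete.

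For the second assertion I may assume the graph of $\Delta$ is complete. Suppose an edge $uv\in\supp(\omega)$ lies in two distinct missing $2$-faces $uvy$ and $uvz$. The vertices $u,v,y,z$ are distinct; $uy,vy,uz,vz$ are edges (being subfaces of missing $2$-faces); and $yz$ is an edge (the graph is complete). Hence Lemma~\ref{lm1} applies to each of the six edges spanned by $\{u,v,y,z\}$ and gives $\supp(\omega)\subseteq\st(p)\cup\st(q)$ for every pair $\{p,q\}\subseteq\{u,v,y,z\}$. A Helly-type observation then shows that every face $G\in\supp(\omega)$ lies in the star of at least three of $u,v,y,z$: were $G$ outside the stars of two of them, the corresponding pair would violate the containment just displayed. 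Since $uv\in\supp(\omega)$, there is a $2$-face $uvw\in\supp(\omega)\subseteq\Delta$, and $w\notin\{y,z\}$ because $uvy,uvz\notin\Delta$. This $2$-face lies in $\st(u)\cap\st(v)$, hence in $\st(y)$ or in $\st(z)$, which forces $uvy\in\Delta$ or $uvz\in\Delta$ — contradicting that these are missing faces. Thus no edge of $\supp(\omega)$ lies in two missing $2$-faces.

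I do not expect a genuine obstacle here. The one point needing a word of care is the repeated appeal to Lemma~\ref{lm1}: it supplies, for each edge $e\in\Delta$, a \emph{nonzero} affine $3$-stress supported on the union of the two vertex stars of $e$, and since $g_3(\Delta)=1$ this stress must be a scalar multiple of $\omega$, so one indeed obtains $\supp(\omega)\subseteq\st(p)\cup\st(q)$ — the same reduction already carried out at the start of this section and in the proofs of Lemmas~\ref{lm: vertices in the support} and \ref{lm: edges in the support}. The mild bookkeeping in the first part (extracting "$F$ extends by every neighbour of $a$" from the support constraints) is entirely analogous to that in the proof of Proposition~\ref{prop:d=2u,missing-u-face}.
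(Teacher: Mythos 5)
Your proof is correct, and the first half takes a genuinely different and arguably cleaner route than the paper's. The paper proves completeness of $G(\Delta)$ via a two-case analysis on the number of parts of size greater than one in the multipartite structure (first ruling out two large parts via a $4$-cycle, then treating the single-large-part case via a $4$-vertex configuration $\{a,b,y,z\}$ and re-running the argument from Lemma~\ref{lm: edges in the support} to force $ab$ contractible). You instead dispatch the contractible-edge case up front via Lemma~\ref{lem:contr-edge}, and in the remaining case give a single uniform argument: for any two vertices $a,b$ in a common part, a $2$-face $F\ni a$ in $\supp(\omega)$ is forced (for every common neighbour $c$, via $\supp(\omega)\subseteq\st(b)\cup\st(c)$ and $F\notin\st(b)$) to satisfy $F\cup\{c\}\in\Delta$, whence $ax$ lies in no missing $2$-face, contradicting the standing ``no contractible edge'' assumption. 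This bypasses Lemma~\ref{lm: edges in the support} entirely and avoids the case split; the only ingredient beyond Lemmas~\ref{lm1}, \ref{lm: vertices in the support}, \ref{lm:mulitpartite}, \ref{lem:contr-edge} is the observation that $\supp(\omega)$ is generated by $2$-faces, so every vertex of $\supp(\omega)$ sits in one. For the second assertion your argument is also correct but slightly circuitous: the paper simply notes that $yz$ is an edge (by the first part), so $\supp(\omega)\subseteq\st(y)\cup\st(z)$, while $uv$ lies in neither star because $uvy$ and $uvz$ are missing, giving $uv\notin\supp(\omega)$ directly; the Helly-type detour through a $2$-face $uvw$ is unnecessary, though harmless.
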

	\begin{proof}
	Recall from Lemma~\ref{lm:mulitpartite} that the graph of $\Delta$, $G(\Delta)$,  is a complete multipartite graph. Thus, to show that $G(\Delta)$ is a complete graph, it suffices to show that each part of $G(\Delta)$ consists of a single vertex.
	
	First, we prove that at most one part of $G(\Delta)$ can have size greater than $1$. Assume, for contradiction, that there are at least two such parts. Then there exist four vertices $a,b,y,z$ such that the subcomplex induced by these vertices is the $4$-cycle $C=(a,b,y,z)$.  Since $yz$ is not in $\st(a)\cup\st(b)$, it follows that $yz\notin \supp(\omega)$. By a similar argument applied to other edges of $C$, we conclude that none of the edges of $C$ are in $\supp(\omega)$. On the other hand, since $z\in \supp(\omega)$ and $z\notin \st(b)$, the argument in the proof of Lemma~\ref{lm: edges in the support} implies that $az$ lies in $\supp(\omega)$,  a contradiction. 
		
	It remains to consider the case where exactly one part of $G(\Delta)$, say $V_1$, has size larger than $1$. Let $a$ and $y$ be vertices of $V_1$, and let $b$ and $z$ be vertices from other parts of $G(\Delta)$. The subgraph of $\Delta$ induced by these four vertices is the union of two $3$-cycles, $(a,b,z)$ and $(b,y,z)$. Consider $\st(a)\cup \st(b)\supseteq \supp(\omega)$. Since $y\in \st(b)$, $y\notin \st(a)$, and $y\in \supp(\omega)$, the argument from Lemma~\ref{lm: edges in the support} shows that $by\in\supp(\omega)$. By symmetry, all edges $ab$, $by$, $yz$, and $za$ lie in $\supp(\omega)$. Now, since $\supp(\omega)$ is also a subcomplex of $\st(b)\cup \st(y)$ and since $az\notin \st(y)$, it follows that $az$ is in $\st(b)$, so $abz\in\Delta$. In other words, the edge $ab$, together with any vertex in $V(\Delta)\backslash (V_1\cup b)$, forms a $2$-face of $\Delta$. Hence, $ab$ is not contained in any missing $2$-face and is therefore a contractible edge.  Lemma~\ref{lem:contr-edge} then implies that $\Delta$ is the join of three $3$-cycles, and consequently, $G(\Delta)$ is a complete graph.	This concludes the proof of the first part of the theorem.
		
		Finally, suppose an edge $uv$ is contained in two missing faces $uvy$ and $uvz$. Since $yz$ is an edge, we have $\supp(\omega)\subseteq\st(y)\cup \st(z)$. However, $uv$ is contained in neither $\st(y)$ nor $\st(z)$, and hence $uv\notin \supp(\omega)$. This completes the proof.
	\end{proof}
	
	We are now ready to prove the second main result of the paper:
	\begin{theorem}\label{main-thm: S(2,5)}
		Let $\Delta\in S(2,5)$ be a sphere with $g_3(\Delta)=1$. Then $\Delta$ is the join of three $3$-cycles.
	\end{theorem}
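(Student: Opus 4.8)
The plan is to reduce the whole statement to Lemma~\ref{lem:contr-edge}. By Proposition~\ref{prop: complete graph} the graph of $\Delta$ is complete, so every missing face of $\Delta$ is a missing $2$-face and $\Delta$ is recovered from the family of its missing $2$-faces; moreover, by Lemma~\ref{lem:contr-edge} it suffices to exhibit a single contractible edge, i.e. an edge lying in no missing $2$-face. So I would argue by contradiction, assuming that every pair of vertices of $\Delta$ lies in a missing $2$-face, and aim for a contradiction.

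Under this assumption the second part of Proposition~\ref{prop: complete graph} sharpens to: every edge of $\supp(\omega)$ lies in \emph{exactly} one missing $2$-face. Double-counting incidences between edges and missing $2$-faces, using this together with Lemma~\ref{lm: edges in the support} (every missing $2$-face has at least two edges in $\supp(\omega)$), gives both $|E(\supp(\omega))|\ge 2\,m_2(\Delta)$ and $3\,m_2(\Delta)\ge\binom{f_0(\Delta)}{2}$. On the other hand $|E(\supp(\omega))|\le f_1(\Delta)=\binom{f_0(\Delta)}{2}$ since the graph is complete, and again because the graph is complete $g_2(\Delta)=\binom{f_0(\Delta)}{2}-6f_0(\Delta)+21=\binom{f_0(\Delta)-6}{2}$; feeding $g_3(\Delta)=1$ into the Dehn--Sommerville relations then determines $f_2(\Delta)$, whence $m_2(\Delta)=\binom{f_0(\Delta)}{3}-f_2(\Delta)=\binom{f_0(\Delta)-5}{3}-1$ (in particular $f_0(\Delta)\ge 9$, since otherwise $m_2(\Delta)\le 0$ and $\Delta$ would be a simplex). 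Substituting this value of $m_2(\Delta)$ into the two displayed inequalities forces $f_0(\Delta)=11$.

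It then remains to exclude a $5$-sphere $\Delta\in S(2,5)$ on $11$ vertices with complete graph, $g_3(\Delta)=1$, and no contractible edge (so $m_2(\Delta)=19$). Here I would pass to vertex links: since the graph of $\Delta$ is complete, the missing $2$-faces of $\Delta$ through a vertex $v$ are precisely the missing edges of the $4$-sphere $\lk(v)\in S(2,4)$, and since $\lk(v)$ has $10>6$ vertices it is not $\partial\sigma^5$, so Theorem~\ref{thm: GLBT} together with the case $d=5$, $k=2$ of Theorem~\ref{main-thm:g=1} (that is, the Nevo--Novinsky theorem \cite{NevoNovinsky}, noting $\partial\sigma^3*(\text{cycle})$ has a missing $3$-face) rules out $g_2(\lk(v))\in\{0,1\}$; hence $g_2(\lk(v))\ge 2$ and $\lk(v)$ has at most $\binom{5}{2}-2=8$ missing edges, which is just short of being able to cover all $10$ vertices of $\lk(v)$. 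In this borderline situation I would instead try to show that the missing $2$-faces of $\Delta$ are pairwise disjoint — by a local stress-space dimension count in the links $\lk(v)$, in the spirit of the argument in Section~4.2 — since $19$ pairwise disjoint triples cannot fit on $11$ vertices (indeed pairwise disjointness of the missing $2$-faces would force $\Delta=(\partial\sigma^2)^{*3}$ and $f_0(\Delta)=9$). Either way the standing assumption is contradicted, so $\Delta$ has a contractible edge and Lemma~\ref{lem:contr-edge} concludes the proof.

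The hard part is exactly this last step, the value $f_0(\Delta)=11$: the counting above is tight there, so genuinely extra structural input is required. Besides the disjointness route, an alternative I would pursue is to extract more from $\supp(\omega)$ directly: each $2$-face $F\in\supp(\omega)$ has at most one ``obstructing'' vertex $y$ with $F\cup y\notin\Delta$ (otherwise one contracts the points of $F$ against two vertices, producing an affine $1$-stress on too few points), that obstructing vertex is constant along edge-adjacent $2$-faces of $\supp(\omega)$, and combining this with the assumption of no contractible edge forces some missing $2$-face to have only one edge in $\supp(\omega)$, contradicting Lemma~\ref{lm: edges in the support}. I expect the bulk of the work to sit precisely here.
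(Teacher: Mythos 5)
Your reduction to finding a contractible edge, and your double-counting that pins down $f_0(\Delta)=11$, coincide (up to rephrasing) with the paper's Theorem~\ref{main-thm: S(2,5)}: both use Proposition~\ref{prop: complete graph} for the complete graph and for the ``at most one missing $2$-face per edge of $\supp(\omega)$'' constraint, Lemma~\ref{lm: edges in the support} for the lower bound of two supported edges per missing $2$-face, Lemma~\ref{lem:contr-edge} for the contradiction hypothesis, and the identity $m_2=\binom{n-5}{3}-1$ (equivalently $f_2=5\binom{n}{2}-15n+36$) from $g_3=1$. The inequalities $3m_2\geq\binom{n}{2}$ and $\binom{n}{2}\geq 2m_2$ are exactly the ones the paper derives, yielding $n=11$.

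The gap is precisely where you flag it: you do not actually dispatch the $n=11$ case, and neither of your two sketched routes closes it. The pairwise-disjointness route would indeed be more than enough (even $m_2\le 3$ would do, far below $19$), but there is no reason for it to hold here, and the mechanism you invoke --- ``a local stress-space dimension count, in the spirit of Section~4.2'' --- does not transfer: the Section~4.2 argument concerns two missing \emph{$k$-faces} ($=3$-faces) sharing a vertex $v$, where adding both $K\backslash v, K'\backslash v$ to $\st(v)$ raises the dimension of the degree-$k$ affine stress space; here the missing faces are $2$-dimensional, one degree short, and no analogous dimension jump is available. The ``obstructing vertex'' route is vaguer still: a $2$-face $F\in\supp(\omega)$ can a priori have up to three obstructing vertices (one per edge, since Proposition~\ref{prop: complete graph} only bounds missing $2$-faces per \emph{edge}), and the claimed constancy of the obstructing vertex along edge-adjacent $2$-faces of $\supp(\omega)$ is asserted but not argued. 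The paper's actual resolution at $n=11$ is different and concrete: since $19\cdot 3 = 55+2$, all but at most two edges lie in a \emph{unique} missing $2$-face, so some vertex $v$ (at least seven of the eleven, in fact) has all ten incident edges in unique missing $2$-faces; the ``partner'' map $w\mapsto x_w$ with $vwx_w$ the unique missing $2$-face through $vw$ is then a fixed-point-free involution on $\lk(v)$, so $\lk(v)$ has exactly five pairwise-disjoint missing edges and hence must be the octahedral $4$-sphere; since that sphere is flag, every missing $2$-face of $\Delta$ must contain one of those five edges, and the excess-$2$ count bounds $m_2\le 5+2=7<19$. This pigeonhole-into-$\lk(v)$ step, together with identifying $\lk(v)$ as the cross-polytope, is the ingredient your proposal is missing.
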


	\begin{proof}
		Let $n=f_0(\Delta)$. By Proposition~\ref{prop: complete graph}, $f_1(\Delta)=\binom{n}{2}$. The assumption that $g_3(\Delta)=1$ then implies that $f_2(\Delta)=5{n \choose 2}-15n+36$.
		
		Assume that $\Delta$ is not the join of three $3$-cycles. By Lemma~\ref{lem:contr-edge}, every edge of $\Delta$ is contained in at least one missing $2$-face. Hence $$3\left(5{n \choose 2}-15n+36\right)=3f_2=\sum_{e\in \Delta} f_0(\lk(e))\leq \binom{n}{2}(n-3).$$
		Simplifying this inequality yields  $n^3-19n^2+108n-216 \geq 0$, and therefore $n\geq 11$. 
		
		On the other hand, by Lemma~\ref{lm: edges in the support}, every missing $2$-face has at least two edges in $\supp(\omega)$. By Proposition~\ref{prop: complete graph}, each of these two edges is not contained in any other missing $2$-face. Thus, $2m_2(\Delta)\leq f_1(\Delta)={n \choose 2}$. Since $m_2={n \choose 3}-f_2={n \choose 3}-5{n\choose 2}+15n-36$, we obtain $${n \choose 3}-5{n\choose 2}+15n-36 \leq \frac{1}{2}{n \choose 2}.$$ Simplifying this inequality yields $h(n):=2n^3-39n^2+217n-432\leq 0$. For $n\geq 9$, the function $h(n)$ is strictly increasing, and since $h(12)=12>0$, it follows that $n\leq 11$. We conclude that $\Delta$ has exactly $11$ vertices. 
		
		Since $\Delta$ has a complete graph, a direct computation yields $f_1(\Delta)=55$, $f_2(\Delta)=146$, and $m_2(\Delta)=\binom{n}{3}-f_2=19$. Also, by Lemma~\ref{lem:contr-edge}, each edge of $\Delta$ is contained in at least one missing $2$-face. Since $19\cdot 3=55+2$, we obtain  that, with an exception of at most two edges, each edge of $\Delta$, is contained in a unique missing $2$-face.
		It follows that there exists a vertex $v$ such that each edge incident to $v$ lies in a unique missing $2$-face. Then $\lk(v)$ is a $4$-sphere whose graph contains exactly five  missing edges $e_1, \dots, e_5$ and they are pairwise disjoint. Hence $\lk(v)$ is a subcomplex of the octahedral $4$-sphere.  Since $\lk(v)$ is itself a $4$-sphere, it must be precisely the octahedral $4$-sphere. In particular, every triple of vertices of $\lk(v)$ that does not contain one of the edges  $e_i$ is a $2$-face. We conclude that every missing $2$-face of $\Delta$ contains one of the edges $e_i$. But then the total number of missing $2$-faces is at most $5+2=7<19$, which yields a contradiction.
	\end{proof}

   \section{Open problems} 
	We close the paper with a few open problems.
 First, it is natural to ask whether one can remove the assumptions on the dimensions of missing faces and provide a characterization of {\em all} $(d-1)$-spheres with $g_k=1$. To this end, and in view of the GLBT, we propose the following conjecture.

   \begin{conjecture}
   	Let $k\geq 2$, $d\geq 2k$, and let $\Delta$ be a $(d-1)$-sphere with $g_k(\Delta)=1$. Then there exists a $d$-dimensional cell complex $\mathcal{C}$ with the following properties:\footnote{The complex $\mathcal{C}$ is more general than a CW complex because one of the cells of $\C$ may not be homeomorphic to a ball.} 
		\begin{enumerate}
		\item all faces of $\mathcal{C}$ of dimension $\leq d-k$ are faces of $\Delta$; furthermore, all faces of $\mathcal{C}$ except one $d$-face are simplices;
		\item the one exceptional $d$-face is a homology $d$-ball whose boundary is a sphere in $S(d-k,d-1)$ with $g_k=1$;
		\item every two faces  of $\mathcal{C}$ intersect in a common (possibly empty) face;
		\item the geometric realization  of $\mathcal{C}$ is a homology $d$-ball, and the boundary complex of $\mathcal{C}$ is $\Delta$.
		\end{enumerate}
	Furthermore, if $\Delta$ is the boundary complex of a simplicial $d$-polytope, then the the exceptional $d$-cell is also the boundary complex of a simplicial $d$-polytope.
		\end{conjecture}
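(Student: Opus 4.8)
The plan is to induct on $\sum_{j=d-k+1}^{d}m_j(\Delta)$, the number of missing faces of $\Delta$ of dimension larger than $d-k$, with our first main theorem furnishing the base case. If this quantity equals $0$, then $\Delta\in S(d-k,d-1)$, and Theorem~\ref{main-thm:g=1} (together with Theorem~\ref{main-thm: S(2,5)} in the case $d=2k=6$) shows that $\Delta$ is either $\partial\sigma^{d-k}*\Gamma$ with $\Gamma$ a $(k-1)$-sphere, or $\partial\sigma^{j}*\partial\sigma^{d-j}$ with $k<j\leq\lfloor d/2\rfloor$. Put $B_0=\sigma^{d-k}*\Gamma$ in the first case and $B_0=\sigma^{j}*\partial\sigma^{d-j}$ in the second; each is a homology $d$-ball with $\partial B_0=\Delta$, and one takes $\mathcal{C}$ to be the cell complex whose cells are $B_0$ together with all faces of $\Delta$. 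Conditions (1)--(4) are then immediate: the unique non-simplex is the $d$-cell $B_0$, all cells of dimension $\leq d-1$ are faces of $\Delta$, any two cells meet in a common face because $\Delta$ is a simplicial complex, and $\|\mathcal{C}\|=\|B_0\|$ is a homology $d$-ball with boundary $\Delta$; the polytopal addendum holds trivially since $\partial B_0=\Delta$.

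For the inductive step, suppose $\Delta$ has a missing face $\mu$ of dimension $>d-k$, and first treat the case $\dim\mu=d-1$. Then $\mu$ is an empty facet-sized simplex whose boundary $\partial\overline{\mu}$ is an induced $(d-2)$-sphere separating $\Delta$, so $\Delta=\Delta_1\#\Delta_2$ is a connected sum along $\mu$. Since $g_k$ is additive under connected sums for $2\leq k\leq\lfloor d/2\rfloor$ and $g_k(\Delta)=1$, one summand, say $\Delta_2$, has $g_k(\Delta_2)=0$ and is therefore $(k-1)$-stacked by Theorem~\ref{thm: GLBT}, with stacked triangulation $B_2=\Delta_2(d-k)$ a simplicial $d$-ball; the other satisfies $g_k(\Delta_1)=1$ and has strictly smaller inductive parameter (one should order the parameter lexicographically, refined by $f_0$, to rule out degeneracies such as $\Delta_2=\partial\sigma^{d}$). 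Applying the inductive hypothesis to $\Delta_1$ produces $\mathcal{C}_1$, and gluing $\mathcal{C}_1$ to $B_2$ along the common facet $\overline{\mu}$ gives a cell complex $\mathcal{C}$ for $\Delta$ verifying (1)--(4) whose unique non-simplex is the exceptional cell of $\mathcal{C}_1$. In particular, for $k=2$ this already completes the proof, since then all missing faces of dimension $>d-2$ have dimension $d-1$; this recovers and extends the picture behind the Nevo--Novinsky theorem.

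The genuinely difficult case is a missing face $\mu$ of intermediate dimension, $d-k<\dim\mu<d-1$: then $\partial\overline{\mu}$ has codimension $\geq 2$ in $\Delta$ and does not separate it, so no connected-sum reduction is available. Here I would study the complexes $\Delta(d-k)$ and $\Delta(k-1)$ directly, in the spirit of Murai--Nevo's proof of the equality case of the GLBT (Theorem~\ref{thm: GLBT}), where $g_k=0$ forces $\Delta(k-1)=\Delta(d-k)$ to be the stacked ball. I expect that when $g_k=1$ the complex $\Delta(d-k)$ is still a homology $d$-ball with boundary $\Delta$ all of whose facets are simplices outside a single ``non-stacked'' region; one would then collapse that region onto a cell $B_0$ and identify $\partial B_0$ as an induced subcomplex of $\Delta$ lying in $S(d-k,d-1)$ with $g_k=1$, using the stress-space machinery of Section~2.4 and the cone lemmas of Section~3 in vertex links to locate where the affine $k$-stress of $\Delta$ concentrates, exactly as in Sections~4 and 5 but without assuming that $\Delta$ itself has no large missing faces. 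By the base case, such a $\partial B_0$ is a join of the stated type and bounds the required core homology ball.

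The main obstacle I anticipate is precisely this intermediate case: showing that the non-stacked part of $\Delta$ is connected and can be confined to one cell $B_0$, and that $B_0$ is a homology $d$-ball whose boundary is an induced $S(d-k,d-1)$-sphere. This seems to demand genuinely new input beyond Murai--Nevo's argument and the tools developed here. A separate difficulty is the polytopal refinement: one must track polytopality through the connected-sum reduction (where it is not obvious that the summand $\Delta_1$ stays polytopal) and, more seriously, through the intermediate-dimension case, presumably via a pulling- or Schlegel-type construction showing that $\partial B_0$ can be chosen to be the boundary complex of a simplicial $d$-polytope.
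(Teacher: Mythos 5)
The statement you are attempting to prove is stated in the paper as a \emph{conjecture}, and the authors explicitly note immediately afterward that the converse direction ``appears to be much harder: the case $k=2$ follows from \cite{NevoNovinsky}, but all other cases remain open.'' So there is no paper proof to compare against; you have set out to resolve an open problem, and your write-up (to its credit) does not claim to have done so.

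Two gaps in your plan deserve to be named concretely. First, the base case of your induction is already incomplete for $k\geq 4$: when $d=2k$ and $\Delta\in S(k-1,2k-1)$ (no missing $k$-face), Theorem~\ref{main-thm:g=1} does not apply and Theorem~\ref{main-thm: S(2,5)} only covers $k=3$. The structure of spheres in $S(k-1,2k-1)$ with $g_k=1$ is itself posed as an open question in the final section of the paper, with the family $K(i,2k-1)$, $2k/3\leq i<k$, offered only as candidate answers (Lemma~\ref{lm: unsymmetric g-vector}). Your induction therefore has no floor to stand on in this regime, and the ``either $\partial\sigma^{d-k}*\Gamma$ or $\partial\sigma^j*\partial\sigma^{d-j}$'' dichotomy you quote simply does not hold there---$K(i,2k-1)$ with $2k/3\leq i<k$ is typically a join of three or more simplex boundaries. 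Second, as you say yourself, the inductive step only genuinely handles a missing face of dimension $d-1$, where the connected-sum decomposition and the additivity $g_k(\Delta_1\#\Delta_2)=g_k(\Delta_1)+g_k(\Delta_2)$ (valid for $2\leq k\leq\lfloor d/2\rfloor$) are available; the decisive case $d-k<\dim\mu<d-1$ is handled by a hope (``I expect that $\Delta(d-k)$ is still a homology $d$-ball\ldots''), which is a conjecture inside a conjecture. Murai--Nevo's control of $\Delta(d-k)=\Delta(k-1)$ hinges on $g_k=0$ forcing a $(k-1)$-stacked ball; when $g_k=1$ there is no analogous result, and it is not even known that $\Delta(d-k)$ is a homology manifold. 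That is exactly where the difficulty lives, and it is why the paper leaves this as a conjecture. The connected-sum reduction and the decreasing-parameter bookkeeping you set up are correct and would plausibly be the first step in an eventual proof (and, as you note, for $k=2$ they suffice together with Nevo--Novinsky), but as written the proposal is a research plan, not a proof.
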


	It is not hard to see that if such a cell complex exists, then $g_k(\Delta)=1$. The converse direction appears to be much harder: the case $k=2$ follows from \cite{NevoNovinsky}, but all other cases remain open.
   
	While in Theorem~\ref{main-thm: S(2,5)}, we characterized all spheres in $S(2,5)$ with $g_3=1$, for $k>3$, the question of which spheres in $S(k-1, 2k-1)$ can have $g_k=1$ remains open. 
	Recall the definition of $K(i, d-1)\in S(i, d-1)$: for fixed integers $d$ and $i$,  write $d=qi+r$ with $1\leq r\leq i$, and define $K(i, d-1):= (\partial \sigma^i)^{*q}*\partial \sigma^r$. In particular, the unique sphere in $S(2, 5)$ with $g_3=1$ is $K(2,5)$.
   
	Using the fact that $h_j(\partial{\sigma^i})=1$ for all $0\leq j\leq i$,	together with the identity $h_j(\Delta*\Gamma)=\sum_{0\leq \ell \leq j} h_\ell(\Delta)h_{j-\ell}(\Gamma)$ for all $j$, one can easily prove the following result. 
   \begin{lemma}\label{lm: unsymmetric g-vector}
   	Let $2i < d\leq 3i$. Then $$g_j(K(i, d-1))=\begin{cases}
   		j+1 & \mbox{ if $0\leq j\leq r$}\\
   		r+1 & \mbox{ if $r+1 \leq j\leq i$}\\
   		d+1-2j & \mbox{ if $i+1\leq j \leq d/2$}
   	\end{cases}.$$
   	In particular, for $k\geq 3$, we have $g_k(K(i, 2k-1))=1$ whenever $\frac{2k}{3}\leq i< k$.
   \end{lemma}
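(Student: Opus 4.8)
The plan is to reduce the claim to an elementary computation with $h$-polynomials. First I would observe that the hypothesis $2i<d\le 3i$ forces $q=2$ in the division $d=qi+r$ with $1\le r\le i$: if $q=1$ then $d=i+r\le 2i$, and if $q\ge 3$ then $d\ge 3i+1$. Hence $K(i,d-1)=\partial\sigma^i*\partial\sigma^i*\partial\sigma^r$ with $d=2i+r$. Since $h_j(\partial\sigma^i)=1$ for $0\le j\le i$, we have $h(\partial\sigma^i;t)=1+t+\cdots+t^i$, and multiplicativity of the $h$-polynomial under joins gives
$$h(K(i,d-1);t)=\bigl(1+t+\cdots+t^i\bigr)^2\bigl(1+t+\cdots+t^r\bigr).$$

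Next I would pass to the $g$-numbers. For $0\le j\le\lfloor d/2\rfloor$, the quantity $g_j=h_j-h_{j-1}$ is exactly the coefficient of $t^j$ in $(1-t)\,h(K(i,d-1);t)$ --- the only extra term produced by the factor $(1-t)$ is $-h_d\,t^{d+1}$, which sits in degree $d+1>\lfloor d/2\rfloor$ and is irrelevant. Using the telescoping identity $(1-t)(1+t+\cdots+t^i)=1-t^{i+1}$, I would rewrite
$$(1-t)\,h(K(i,d-1);t)=(1-t^{i+1})\,P(t),\qquad P(t):=\bigl(1+t+\cdots+t^i\bigr)\bigl(1+t+\cdots+t^r\bigr),$$
so that $g_j=p_j-p_{j-i-1}$, where $P(t)=\sum_m p_m t^m$ and $p_m:=0$ for $m<0$ or $m>i+r$.

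The coefficients $p_m$ form the trapezoidal sequence obtained by convolving two segments of lengths $i+1$ and $r+1$ (recall $r\le i$): explicitly $p_j=j+1$ for $0\le j\le r$, $p_j=r+1$ for $r\le j\le i$, and $p_j=i+r-j+1$ for $i\le j\le i+r$. I would then split the range $0\le j\le\lfloor d/2\rfloor$ into the three intervals in the statement. If $0\le j\le r$ or $r+1\le j\le i$, then $j-i-1<0$, so $p_{j-i-1}=0$ and $g_j=p_j$, which is $j+1$ and $r+1$ respectively. If $i+1\le j\le\lfloor d/2\rfloor$, then $0\le j-i-1\le\lfloor d/2\rfloor-i-1\le r/2-1<r$, so $j-i-1$ lies in the strictly increasing part of $P$ and $p_{j-i-1}=j-i$, while $i<j\le i+r$ gives $p_j=i+r-j+1$; hence $g_j=(i+r-j+1)-(j-i)=2i+r+1-2j=d+1-2j$. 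This yields the three-case formula.

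For the final assertion I would set $d=2k$, so that $d-1=2k-1$: the condition $2k/3\le i<k$ is precisely $2i<d\le 3i$, and then $r=d-2i=2k-2i$ satisfies $1\le r\le i$. Since $i<k$ forces $i+1\le k=\lfloor d/2\rfloor$, the index $j=k$ falls into the third case, giving $g_k(K(i,2k-1))=d+1-2k=1$. I do not expect a real obstacle here; the one point needing care is checking that $j-i-1$ lands in the linear part of $P$ when $i+1\le j\le\lfloor d/2\rfloor$, which is exactly where both hypotheses $d\le 3i$ (equivalently $r\le i$) and $j\le d/2$ are used.
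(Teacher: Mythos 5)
Your proof is correct and follows exactly the route the paper indicates: the paper does not spell out a proof but says the lemma follows from $h_j(\partial\sigma^i)=1$ and the multiplicativity $h_j(\Lambda*\Gamma)=\sum_\ell h_\ell(\Lambda)h_{j-\ell}(\Gamma)$ of $h$-polynomials under joins, which is precisely what you carry out. Your explicit computation via $(1-t)h(t)=(1-t^{i+1})P(t)$ and the trapezoidal coefficient analysis is a clean way to organize the ``easy'' calculation the authors allude to.
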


	In light of Theorem~\ref{main-thm: S(2,5)}, it is natural to pose the following question.
	
	\begin{question}Let $k\geq 4.$ Let $\Delta\in S(k-1, 2k-1)$ be a sphere with $g_k(\Delta)=1$. Must $\Delta$ be one of the spheres  $K(i, 2k-1)$, where $\frac{2k}{3} \leq i < k$?
	\end{question}
	
	Finally, we note that among all spheres in $S(i, d-1)$, the sphere $K(i, d-1)$ simultaneously minimizes all the $f$- and $h$-numbers; see \cite{GoffKleeN, Nevo2009}.
	This observation leads to another natural question.
	\begin{question} In $S(i, d-1)$, does $K(i, d-1)$ simultaneously minimize all the $g$-numbers? If so, is it the unique minimizer?
	\end{question}
\noindent In the class of PL flag $(d-1)$-spheres, $K(1, d-1)$---namely the octahedral $(d-1)$-sphere---is indeed the unique minimizer of all $g$-numbers \cite{NZ-Aff-Reconstr}. All other cases remain open.

{\small
	\bibliography{refs}
	\bibliographystyle{plain}
}
\end{document}